\newcommand{\mathCommandFont}[1]{\mathrm{#1}}
\newcommand{\logicFont}[1]{\mathcal{#1}}
\newcommand{\classFont}[1]{\mathrm{#1}}
\newcommand{\problemFont}[1]{\textsc{#1}}
\newcommand{\halfliteral}[1]{\protect\ensuremath{#1}}
\newcommand{\literal}[1]{\halfliteral{#1}\xspace}
\newcommand{\bracketOperator}[3]{\literal{#1\nobreak#3\nobreak#2}}
\newcommand{\commandOperator}[2]{\literal{\mathord{\mathCommandFont{#1}\ifthenelse{\equal{#2}{}}{}{(\nobreak#2\nobreak)}}}}
\newcommand{\set}[3][]{\literal{\left\{#2\;\middle|\;\ifthenelse{\equal{#1}{}}{\text{#3}}{\parbox{#1}{#3}}\right\}}}
\newcommand{\dfn}{\mathrel{\mathop:}=}
\newcommand{\ddfn}{\mathrel{\mathop{{\mathop:}{\mathop:}}}=}
\newcommand{\dep}[1][\cdot]{\literal{\mathCommandFont{=}\ifthenelse{\equal{#1}{}}{}{(\nobreak#1\nobreak)}}}
\newcommand{\poc}[2][]{\bracketOperator{\left\lgroup}{\right\rgroup}{\ifthenelse{\equal{#1}{}}{#2}{\begin{array}{@{}#1@{}}#2\end{array}}}}
\newcommand{\logic}[1]{\literal{\logicFont{#1}}}
\newcommand{\paraLogic}[2]{\ensuremath{\logic{#1}\ifthenelse{\equal{#2}{}}{}{(#2)}}\xspace}
\newcommand{\D}{\logic{D}}
  \newcommand{\df}{\D}
\newcommand{\ff}{\logic{F}}
\newcommand{\ifl}{\logic{IF}}
\newcommand{\bd}{\logic{BD}}
\newcommand{\abd}{\logic{\forall{}\nobreak\text{-}\nobreak{}BD}} 
\newcommand{\rbd}{\logic{RBD}} 
\newcommand{\bbd}{\logic{BBD}} 
\newcommand{\POC}{\logic{POC}}
\newcommand{\pocfo}{\logic{\POC[FO]}}
\newcommand{\pocsvfo}{\logic{\mathsf{D}[FO]}}
\newcommand{\mnfor}{N_\pi[\logic{FO}_r]}
\newcommand{\fopoc}{\logic{FO(\POC)}}
\newcommand{\fopocsv}{\logic{FO(\mathsf{D})}}
\newcommand{\fopocp}{\logic{FO(\POC^+)}}
\newcommand{\fopocpsv}{\logic{FO(\mathsf{D}^+)}}
\newcommand{\pocqf}{\logic{\POC[QF]}}
\newcommand{\pocsvqf}{\logic{\mathsf{D}[QF]}}
\newcommand{\eso}{\logic{ESO}}
\newcommand{\fo}{\logic{FO}}
  \newcommand{\LL}[1][]{\paraLogic{L}{#1}}
\newcommand{\class}[1]{\literal{\classFont{#1}}}
\newcommand{\TIME}{\class{TIME}}
\newcommand{\NTIME}{\class{NTIME}}
\newcommand{\ATIME}{\class{ATIME}}
\newcommand{\SPACE}{\class{SPACE}}
\newcommand{\NSPACE}{\class{NSPACE}}
\newcommand{\ASPACE}{\class{ASPACE}}
\newcommand{\NP}{\class{NP}}
\newcommand{\problem}[1]{\literal{\problemFont{#1}}}
\newcommand{\hsat}[1][]{\nobreak\textnormal{-}\allowbreak\sat\nobreak\ifthenelse{\equal{#1}{}}{}{_{\mathnormal{#1}}}\xspace}
\newcommand{\hmc}[1][]{\nobreak\textnormal{-}\allowbreak\problem{MC}\nobreak\ifthenelse{\equal{#1}{}}{}{_{\mathnormal{#1}}}\xspace}
\newcommand{\sat}[1][]{\commandOperator{\problem{Sat}}{#1}}
\newcommand{\simp}{\literal{\Rrightarrow}}
\newcommand{\efgmnr}{\literal{N_\pi\mathrm{EF}_r}}
\newcommand{\arraycomment}[1]{\textnormal{\footnotesize\textsl{#1}}}
\newcommand{\mA}{\mathfrak{A}}
\newcommand{\mB}{\mathfrak{B}}
\newcommand{\mi}{\mathit}
\newcommand{\Str}{\mathrm{Str}}
\newcommand{\dom}{\mathrm{dom}}
\newcommand{\fr}{\mathrm{fr}}
\newcommand{\dl}{\df}
\theoremstyle{plain}
\newtheorem{theorem}{Theorem}[section]
\newtheorem{lemma}[theorem]{Lemma}
\newtheorem{proposition}[theorem]{Proposition}
\newtheorem{corollary}[theorem]{Corollary}
\newtheorem{remark}[theorem]{Remark}
\newtheorem{definition}[theorem]{Definition}
\newcommand{\Title}{Boolean Dependence Logic and Partially-Ordered Connectives}
\newcommand\Author{Johannes Ebbing\footnote{
Leibniz University Hannover,
Theoretical Computer Science,
\{ebbing,lohmann\}@thi.uni-hannover.de}
\and
Lauri Hella\footnote{University of Tampere,
Mathematics,
\{jonni.virtema,lauri.hella\}@uta.fi}
\and
Peter Lohmann\footnotemark[2]
\and
Jonni Virtema\footnotemark[3]\\
}
\newcommand{\PDFAuthor}{Johannes Ebbing, Lauri Hella, Peter Lohmann, Jonni Virtema}
\newcommand{\Keywords}{dependence logic, partially-ordered connectives, expressivity, existential second-order logic}
\title{\Title\footnote{Supported by a DAAD grant 50740539, University of Tampere reseach grant, Finnish Academy of Science and Letters: Väisälä fund research grant and grants 266260 and 138163 of the Academy of Finland}}
\author{\Author}
\begin{document}

\maketitle

\begin{abstract}
We introduce a new variant of dependence logic (\df) called Boolean dependence logic (\bd).
In \bd dependence atoms are of the type $\dep[x_1,\dots,x_{n},\alpha]$, where $\alpha$ is a Boolean variable. Intuitively, with Boolean dependence atoms one can express quantification of relations, while standard dependence atoms express quantification over functions.

We compare the expressive power of \bd to \D and first-order logic enriched by partially-ordered connectives, \fopoc. We show that the expressive power of \bd and \df coincide. We define natural syntactic fragments of $\bd$ and show that they coincide with the corresponding fragments of \fopoc with respect to expressive power. We then show that the fragments form a strict hierarchy.
\end{abstract}

\section{Introduction}
Dependence is an important concept in various scientific disciplines. A multitude of formalisms have been designed to model dependences, for example, in database theory, social choice theory, and quantum mechanics. However, for a long time the research has been scattered and the same ideas have been discovered many times over in different fields of science. One important reason, albeit surely not the only one, for this scatteredness was the lack of unified logical background theory for the concept of dependence. Over the last decade the emergence of dependence logic and the extensive and rigorous research conducted on dependence logic and related formalisms have mended this shortcoming.


Dependences between variables in formulae is the most direct way to model dependences in logical systems.
%
%
In first-order logic the order in which quantifiers are written determines dependence relations between variables. For example, when using game 
theoretic semantics to evaluate the formula
\[ 
\forall x_0\exists x_1\forall x_2\exists x_3\, \varphi,
\] 
 the choice for $x_1$ depends on the value for $x_0$, and the choice for $x_3$   depends on the value of both universally quantified variables $x_0$  and  $x_2$.
The first to consider more complex dependences between variables was Henkin \cite{henkin1961} with his partially-ordered quantifiers.
The simplest non-trivial partially-ordered quantifier is usually written in the form
%


\begin{equation}\label{poq}
\poc[c]{\forall x_0 \quad \exists x_1\\ 
\forall x_2\quad \exists x_3}\varphi,
\end{equation}
and the idea is that $x_1$ depends only
on $x_0$ and  $x_3$ depends only on $x_2$. Enderton \cite{Enderton70} and Walkoe \cite{Walkoe70} observed that exactly the properties definable in existential second-order logic (\eso) can be expressed
with partially-ordered quantifiers. Building on the ideas of Henkin, Blass and Gurevich introduced in \cite{blassgure} the narrow Henkin quantifiers
\begin{equation*}\label{poc2}
\poc[cc]{\forall \vec{x}_1 & \exists \alpha_1\\
	\vdots &\vdots \\
	\forall \vec{x}_n & \exists \alpha_n} \varphi.
\end{equation*}
Here $\alpha_1,\ldots,\alpha_n$ are Boolean variables (or, more generally, variables ranging over some fixed finite domains). The idea of Blass and Gurevich was further developed by Sandu and Väänänen in \cite{sava92} where they introduced partially-ordered connectives 
\begin{equation*}\label{poc1}
\poc[cc]{\forall \vec{x}_1 & \bigvee_{b_1\in\{0,1\}}\\
\vdots & \vdots\\
\forall \vec{x}_n & \bigvee_{b_n\in\{0,1\}}}\gamma.
\end{equation*}
Here $\gamma$ is a tuple $(\gamma_{b_1\ldots b_n})_{(b_1,\ldots,b_n)\in\{0,1\}^n}$ of formulae, and the 
choice of each bit $b_i$ determining the disjunct $\gamma_{b_1\ldots b_n}$ to be satisfied depends only on 
$\vec{x}_i$. 

The first to linearize the idea behind the syntax of partially-ordered quantifiers were Hintikka and Sandu \cite{hisa89,hintikka96}, who introduced independence-friendly logic (\ifl). \ifl-logic extends \fo in terms of so-called slashed quantifiers.
%
Dependence logic (\D), introduced by V\"a\"an\"anen \cite{va07},
was inspired by \ifl-logic, but the approach of V\"a\"an\"anen provided a fresh perspective on quantifier dependence. In dependence logic the 
dependence relations between variables are written in terms of novel 
atomic dependence formulae. For example, the partially-ordered quantifier \eqref{poq} can be expressed in dependence logic as follows
\setcounter{equation}{3}
\begin{equation*}
\forall x_0\exists x_1\forall x_2\exists
x_3(\dep[x_2,x_3]\wedge\varphi).
\end{equation*}
The  atomic formula $\dep[x_2,x_3]$ has
the explicit meaning that $x_3$ is completely determined by $x_2$ and nothing else. 

Over the last decade the research related to independence-friendly logic and dependence logic has bloomed. A variety of closely related logics have been defined and various applications suggested, see e.g. journal articles \cite{Abramsky:2007, Abva09, Sevenster:2009, Vaananen+Hodges:2010, Duko12, engko13, grava13, konva13, lohvo13} and conference reports \cite{Bradfield:2005, KoKuLoVi:2011, bradfield13, EHLV2013, EHMMVV13, EKV13}. Furthermore, within the last five years five PhD-thesis have been published on closely related topics, see \cite{nurmithesis, jarmo, gallianithesis, lohmannthesis, ebbingthesis}. See also the monographs \cite{va07, allen:2011}. Research related to partially-ordered connectives has been less active. For recent work, see e.g.~\cite{setu06c, hesetu08, EHLV2013}.


In this article we introduce a new variant of dependence logic called Boolean dependence logic (\bd).
Boolean dependence logic extends first-order logic with special restricted versions of dependence atoms which we call Boolean dependence atoms.
While all variables occurring in dependence atoms
\[
\dep[x_1,\dots,x_n,y]
\]
of dependence logic are first-order variables,
in Boolean dependence atoms
\[
\dep[x_1,\dots,x_{n},\alpha]
\]
of Boolean dependence logic only the antecedents $x_1,\dots,x_{n}$ are first-order variables, whereas the consequent
$\alpha$ is a Boolean variable. A Boolean variable is special kind of variable with values that range over the set $\{\top, \bot\}$, i.e., Boolean variables as assigned a value $\mi{true}$ or $\mi{false}$.

Boolean dependence atoms provide a direct way to express partially-ordered connectives in a similar manner as dependence atoms express partially-ordered quantifiers. To make this connection more clear, we define a syntactic variant of the partially-ordered connectives of Sandu and V\"a\"an\"anen \cite{sava92}, closely related to the narrow Henkin quantifier of Blass and Gurevich \cite{blassgure}. We show that our definition of partially-ordered connectives is, in a strong sense, equivalent to that of Sandu and V\"a\"an\"anen. We then establish a novel connection between partially-ordered connectives and Boolean dependence logic.
For example, the partially-ordered connective
\[
	\poc[c]{\forall x \quad \exists \alpha\\
	\forall y \quad \exists \beta} \varphi,
\]
defined with Boolean variables, can be expressed in Boolean dependence logic by the formula
\begin{equation*}
\forall x\exists \alpha\forall y\exists
\beta\big(\dep[y,\beta]\wedge\varphi\big).
\end{equation*}

Intuitively, the dependence atoms of dependence logic express quantification over functions. The meaning of the dependence atom
\[
\dep[x_1,\dots, x_n,y]
\]
is that there exists a $k$-ary function that maps the values of the variables $x_1,\dots,x_n$ to the value of the variable $y$. Analogously, Boolean dependence atoms can be interpreted as expressing quantification of relations or, more precisely, characteristic functions of relations. In this sense, the meaning of the Boolean dependence atom
\[
\dep[x_1,\dots, x_n,\alpha]
\]
is that there exists a characteristic function of an $n$-ary relation that maps the values of the variables $x_1,\dots,x_n$ to the value $\bot$ or $\top$. Since the expressive powers of dependence logic and existential second-order logic coincide, and since in existential second-order logic it is clear that functions and relations are interdefinable, the question arises whether there is any significant difference between dependence logic and Boolean dependence logic. In fact, in terms of expressive power there is no difference, we show that the expressive power of dependence logic and Boolean dependence logic coincide.  

On the other hand, natural fragments of Boolean dependence logic directly correspond to logics enriched with partially-ordered connectives. We show that in terms of expressive power, certain fragments of Boolean dependence logic coincide with natural logics enriched with partially-ordered connectives. 
In addition, we show that these fragments of Boolean dependence logic form a strict hierarchy with respect to expressive power.

Our results
can be seen as a contribution to the analysis of fragments of existential second-order logic. In particular, we are able to separate natural fragments of existential second-order logic. We also give new insight concerning interdefinability of functions and relations in the framework of dependence logic, and henceforth contribute to the basic research of the dependence phenomenon.
\medskip

The structure of this 
paper 
is as follows. In Section \ref{bdl} we give a formal definition of Boolean dependence logic and state some of its elementary properties. In Section \ref{pocdef} we first briefly discuss the origin of partially-ordered connectives. We then give two alternative definitions for partially-ordered connectives, one familiar from the literature, and a syntactic variant that makes the comparison to Boolean dependence logic more straightforward. Finally, we show that, in a rather strong sense, these two definitions are equivalent. In Section \ref{fofbd} we define three natural fragments of Boolean dependence logic. We name them as bounded Boolean dependence logic (\bbd), restricted Boolean dependence logic (\rbd) and universal Boolean dependence logic (\abd). In Section \ref{normalform} we show a normal form for bounded Boolean dependence logic, and in Section \ref{equiv} we use this normal form to show that the expressive power of \bbd, \rbd and \abd coincide with the expressive power of natural logics enriched with partially-ordered connectives, namely \fopocp, \pocfo and \pocqf, respectively. In Section \ref{separations} we show that \bd, \bbd, \rbd and \abd form a strict hierarchy with respect to 
expressive power.

\section{Boolean dependence logic}\label{bdl}
Boolean dependence logic ($\bd$) is a variant of dependence logic in which the consequents of dependence atoms are Boolean variables instead of first-order variables. We denote Boolean variables by the Greek letters $\alpha$ and $\beta$, whereas we use $x,y,z$ to denote first-order variables as usual. We use $\chi$ to denote a variable that is either a first-order variable or a Boolean variable. Tuples of variables are denoted by $\vec{\alpha},\vec{\beta}$, $\vec{x},\vec{y}$ and $\vec{\chi}$, respectively.

We first recall the syntax of dependence logic $\dl$:
\[
\begin{array}{lcl}
\varphi &\ddfn& x_1=x_2 \mid \neg\, x_1=x_2\mid
R(x_1,\dots,x_n)\mid \neg R(x_1,\dots,x_n)  \mid \\
&& \dep[x_1,\dots,x_n, y] \mid(\varphi \vee \varphi) \mid (\varphi \wedge \varphi) \mid\forall x \varphi \mid \exists x \varphi.
\end{array}
\]
We will give the semantics for dependence logic and Boolean dependence logic simultaneously. 
The syntax of Boolean dependence logic is defined as follows:

\begin{definition}
Let $\tau$ be a relational vocabulary. The syntax of Boolean dependence logic $\bd(\tau)$ is defined from $\tau$ by the following grammar:
\[
\begin{array}{lcl}
\varphi &\ddfn& x_1=x_2 \mid \neg\, x_1=x_2\mid \alpha \mid \neg\alpha \mid \dep[x_1,\dots,x_n, \alpha] \mid\\
&& R(x_1,\dots,x_n)\mid \neg R(x_1,\dots,x_n)  \mid \\
&& (\varphi \vee \varphi) \mid (\varphi \wedge \varphi) \mid\forall x \varphi \mid \exists x \varphi\mid \exists \alpha \varphi.
\end{array}
\]
\end{definition}
The semantics for dependence logic and Boolean dependence logic is defined in terms of teams, i.e., sets of assignments. The only difference is that in Boolean dependence logic we are also assigning values for Boolean variables. Hence, for \bd, {\em assignments} over $\mA$
are finite functions that map first-order variables to elements of $A$ and Boolean variables to elements of $\{\bot,\top\}$. We further assume that $A\cap \{\bot,\top\}$ is always empty.
Notice that Boolean variables are never assigned a value from the domain $A$ and first-order variables are never assigned a value $\bot$ or $\top$.
If $s$ is an assignment, $x$ a first-order variable and $a\in A$, we denote by $s(a/x)$ the assignment with domain $\dom (s)\cup \{x\}$ such that
\[
s(a/x)(\chi) =
\left\{
	\begin{array}{ll}
		s(\chi)  & \mbox{if } \chi\neq x \\
		a & \mbox{if } \chi=x.
	\end{array}
\right.
\]
Analogously, if $s$ is an assignment, $\alpha$ a Boolean variable and $a\in \{\bot,\top\}$, we denote by $s(a/\alpha)$ the assignment with domain $\dom (s)\cup \{\alpha\}$ such that
\[
s(a/\alpha)(\chi) =
\left\{
	\begin{array}{ll}
		s(\chi)  & \mbox{if } \chi\neq \alpha \\
		a & \mbox{if } \chi=\alpha.
	\end{array}
\right.
\]

Let $A$ be a set and $\{x_1,\ldots,x_n,\alpha_1,\dots,\alpha_m\}$ a finite (possibly empty) set  of
variables. 
A {\em team} $X$ of $A$ with domain
\[
\dom (X)=\{x_1,\ldots,x_n,\alpha_1,\dots,\alpha_m\}
\]
is any set of assignments from $\dom(X)$ into $A\cup\{\bot,\top\}$. However, we fix that, for the empty team $\emptyset$, $\dom (\emptyset)=\emptyset$.
If $X$ is a team of $A$, and
$F\colon X\rightarrow A$ and $G\colon X\rightarrow \{\bot,\top\}$ are functions, we use
\begin{itemize}
\item $X(F/x)$ to denote the team $\{s(F(s)/x) \mid s\in X \},$
\item $X(G/\alpha)$ to denote the team $\{s(G(s)/\alpha) \mid s\in X \}$, and
\item $X(A/x)$ to denote the team $\{s (a/x)\mid s\in X\ \textrm{and}\ a\in A \}.$
\end{itemize}
Let $X$ be a team of $\mA$, $W\subseteq \dom (X)$ and $F:X\to A$ a function. We say that the function $F$ is \emph{$W$-determined} if for every assignment $s,s'\in X$ the implication
\[
\forall\, \chi\in W: s(\chi)=s'(\chi) \quad\Rightarrow\quad F(s)=F(s')
\]
holds.
\begin{definition}\label{semantics of bd}
Let $\mA$ be a model and $X$ a team of $\mA$. The satisfaction relation $\mA\models_X\varphi$ for dependence logic and Boolean dependence logic is defined as follows.
%
%
\[
\begin{array}{lcl}
\mA\models_X R(x_1,\dots,x_n) &\Leftrightarrow& \forall s\in X: \big(s(x_1),\dots,s(x_n)\big)\in R^\mA.\\
\mA\models_X \neg R(x_1,\dots,x_n) &\Leftrightarrow& \forall s\in X: \big(s(x_1),\dots,s(x_n)\big)\not\in R^\mA.\\
\mA\models_X (\varphi\wedge\psi) &\Leftrightarrow& \mA\models_X \varphi \text{ and } \mA\models_X \psi.\\
\mA\models_X (\varphi\vee\psi) &\Leftrightarrow& \mA\models_Y \varphi \text{ and } \mA\models_Z \psi,\\
&&\text{for some $Y$ and $Z$ such that $Y\cup Z=X$}.\\
%
%
\mA \models_X \exists x \psi &\Leftrightarrow& \mA \models _{X(F/x)} \psi \text{ for some } F\colon X\to A.\\
\mA \models_X \forall x\psi &\Leftrightarrow& \mA \models _{X(A/x)} \psi.
\end{array}
\]
For dependence logic we have the additional rule:
\begin{align*}
\mA\models_X \dep[x_1,\dots,x_n,y] \quad \Leftrightarrow \quad &\forall s,t\in X: s(x_1)=t(x_1),\dots, s(x_n)=t(x_n)\\
&\text{implies that } s(y)=t(y).
\end{align*}
For Boolean dependence logic we further have the following rules:
\[
\begin{array}{lcl}
\mA\models_X \dep[x_1,\dots,x_n,\alpha] &\Leftrightarrow& \forall s,t\in X: s(x_1)=t(x_1),\dots, s(x_n)=t(x_n)\\
&&\text{implies that } s(\alpha)=t(\alpha).\\
\mA\models_X \alpha &\Leftrightarrow& \forall s\in X: s(\alpha)=\top.\\
\mA\models_X \neg\alpha &\Leftrightarrow& \forall s\in X: s(\alpha)=\bot.\\
\mA \models_X \exists \alpha \psi &\Leftrightarrow& \mA \models _{X(F/\alpha)} \psi \text{ for some } F\colon X\to \{\bot,\top\}.\\
\end{array}
\]
\end{definition}

On the level of sentences the expressive
power of dependence logic coincides with that of existential
second-order logic.
\begin{theorem}\label{d equiv if equiv eso}
$\dl\equiv \eso$.
\end{theorem}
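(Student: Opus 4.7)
I would prove the two containments $\dl \leq \eso$ and $\eso \leq \dl$ separately, via compositional translations in each direction. Throughout, the key observation is that a team with domain $\{x_1,\dots,x_k\}$ on a model $\mA$ is naturally encoded by a $k$-ary relation on $A$, so team semantics can be mimicked inside $\eso$ by an extra layer of second-order quantification ranging over team-relations, split teams and Skolem functions.

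For $\dl \leq \eso$, my plan is to define, by induction on a $\dl$-formula $\varphi$ with free variables $x_1,\dots,x_k$, an $\eso$-formula $\tau_\varphi(X)$ with a fresh $k$-ary relation variable $X$ such that for every $\mA$ and every team $Y$ with $\dom(Y)=\{x_1,\dots,x_k\}$,
\[
\mA \models_Y \varphi \quad\Longleftrightarrow\quad \mA \models \tau_\varphi(X)\ \text{when $X$ is interpreted as $\{(s(x_1),\dots,s(x_k))\mid s\in Y\}$.}
\]
The inductive clauses are routine: first-order literals become $\forall \vec v\, (X(\vec v)\to \cdots)$; the atom $\dep[x_{i_1},\dots,x_{i_n},x_j]$ becomes the $\fo$-formula stating that agreement on the $x_{i_l}$-coordinates forces agreement on the $x_j$-coordinate in $X$; conjunction is direct; disjunction uses two existentially quantified relations $Y,Z$ whose union is $X$; the universal quantifier is handled by the duplication relation $X'(\vec v, w)\leftrightarrow X(\vec v)$; and the existential quantifier $\exists x$ is captured by second-order existential quantification over a relation $X'$ constrained to be the graph of a function $F:X\to A$ extending each tuple in $X$ by $F(s)$. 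Applied to a $\dl$-sentence evaluated at the team $\{\emptyset\}$, the resulting $\eso$-sentence defines exactly the same class of structures.

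For $\eso \leq \dl$, I would first invoke the Skolem normal form for $\Sigma^1_1$: every $\eso$-sentence is equivalent to one of the shape
\[
\exists f_1\dots\exists f_m\, \forall x_1\dots\forall x_n\, \psi(\vec{x}, f_1(\vec{z}_1),\dots, f_m(\vec{z}_m)),
\]
where $\psi$ is quantifier-free in the base vocabulary and each $\vec{z}_i$ is a subtuple of $x_1,\dots,x_n$. I would then translate this sentence into the $\dl$-sentence
\[
\forall x_1\dots\forall x_n\, \exists y_1\dots\exists y_m\, \bigl(\dep[\vec{z}_1, y_1]\wedge\dots\wedge \dep[\vec{z}_m, y_m]\wedge \psi^*\bigr),
\]
where $\psi^*$ arises from $\psi$ by replacing each term $f_i(\vec{z}_i)$ with $y_i$. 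In team semantics, the Skolem function witnessing $\exists y_i$ is a function $F_i$ on the current team, and the dependence atom $\dep[\vec{z}_i,y_i]$ is precisely the semantic condition that $F_i$ is $\vec{z}_i$-determined, i.e.\ comes from a function of the arguments of $f_i$. Verifying that this equivalence goes through at the level of sentences is straightforward.

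The main obstacle is the $\dl\leq\eso$ direction, and in particular the disjunction and existential quantifier clauses: one has to take genuine care that the team-splitting witnessing $\varphi\vee\psi$ and the Skolem function witnessing $\exists x\, \psi$ can be simulated by existentially quantified relations in $\eso$ whose shape is first-order definable from $X$. The $\eso\leq\dl$ direction is essentially a reformulation, modulo the non-trivial but standard passage through Skolem normal form for $\Sigma^1_1$, which reduces second-order quantification over relations to quantification over functions that matches $\dl$'s mechanism for expressing functional dependence.
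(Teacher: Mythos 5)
Your proposal is correct and takes essentially the same route as the paper, whose ``proof'' of this theorem is only a pointer to the literature: it cites \cite{Enderton70,Walkoe70} for $\eso\le\dl$ (the Skolem-normal-form argument you sketch for the second containment) and \cite{va07,Hodges97} for $\dl\le\eso$ (the compositional translation encoding teams by existentially quantified relation variables, with the splitting and Skolemization clauses you describe). In effect you have reconstructed the standard arguments behind the paper's citations, so there is nothing to add beyond the usual bookkeeping for the empty team and the $0$-ary relation encoding the initial team $\{\emptyset\}$.
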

\begin{proof}The fact $\eso\le \df$ is based on the analogous result of \cite{Enderton70,Walkoe70} for partially-ordered quantifiers.  For the converse inclusion, see  \cite{va07} and \cite{Hodges97}.
\end{proof}

\begin{remark}
Note that we do not allow universal quantification of Boolean variables in the syntax of Boolean dependence logic. We have chosen this convention in order to make the comparison to logics with partially-ordered connectives more straightforward. Furthermore, allowing universal quantification of Boolean variables would not add anything essential to Boolean dependence logic, since universal quantification of Boolean variables can be simulated by universal first-order and existential Boolean quantifiers. It is easy to check that, with respect to models with cardinality at least $2$, the formulae $\forall \alpha \varphi$ and 
\[
\forall x \forall y \exists \alpha \Big(\big((x=y \wedge \alpha) \vee (\neg x=y \wedge \neg\alpha)\big)\wedge \varphi\Big),
\]
where $x$ and $y$ are fresh first-order variables that do not occur in $\varphi$, are equivalent.
\end{remark}
%
The set $\fr({\varphi})$ of \emph{free variables} of a $\bd$-formula $\varphi$
is defined recursively in the obvious manner, i.e., in addition to the rules familiar from first-order logic, we have that 
\begin{align*}
%
&\fr\big({\dep[x_1,\dots,x_k,\alpha]}\big)= \{x_1,\dots,x_k,\alpha\},\\
&\fr({\neg\alpha})=\fr({\alpha})=\{\alpha\}, \\
%
%
%
&\fr({\exists \alpha\,\varphi})=\fr({\varphi})\setminus\{\alpha\}.
\end{align*}
If $\fr({\varphi})=\emptyset$, we call $\varphi$ a sentence. We say that the sentence $\varphi$ is true in the model $\mA$ and write
\[
\mA\models\varphi,
\]
if $\mA\models_{\{\emptyset\}}\varphi$ holds. We define the following abbreviation. 
\begin{definition}
Let $V=\{x_{i_1},\dots,x_{i_n}\}$ where $i_j\leq i_{j+1}$, for all $j < n$. 
By $\dep[V,\alpha]$ and $\dep[V,y]$ we denote $\dep[x_{i_1},\dots,x_{i_n},\alpha]$ and $\dep[x_{i_1},\dots,x_{i_n},y]$, respectively.
\end{definition}
Note that while the precise ordering of the $n$ first first-order variables in the dependence atom corresponding to $\dep[V,\alpha]$ or $\dep[V,y]$ is irrelevant, to be precise, we have to fix some ordering. Hence we choose the most canonical one. 

%
We will next state some elementary results on Boolean dependence logic familiar from dependence logic.
In Boolean dependence logic, as well as in dependence logic, the truth of a formula depends only on the interpretations of the variables occurring free in the formula.
\begin{proposition}\label{prop:teamrestriction}
Let $\varphi$ be a $\bd$-formula of vocabulary $\tau$, $\mA$ a $\tau$-model and $X$ a team of $\mA$. If $V\supseteq \fr({\varphi})$, then
\[
\mA\models_X\varphi \quad\text{ iff }\quad \mA\models_{X\upharpoonright V} \varphi.
\]
\end{proposition}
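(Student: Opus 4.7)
The plan is to prove the proposition by induction on the structure of $\varphi$.

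For the base cases (the literals $x_1 = x_2$, $\neg\, x_1 = x_2$, $R(x_1,\dots,x_n)$, $\neg R(x_1,\dots,x_n)$, the propositional literals $\alpha$, $\neg\alpha$, and the Boolean dependence atom $\dep[x_1,\dots,x_n,\alpha]$), the satisfaction clause only inspects the values that the assignments give to the variables occurring in the atom, and each such variable lies in $\fr(\varphi) \subseteq V$. Hence $s(\chi) = (s\upharpoonright V)(\chi)$ whenever $\chi$ is queried, and the equivalence is immediate from the semantic rules.

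The conjunction case is direct from the IH. For disjunction, the forward direction observes that any split $X = Y \cup Z$ induces a split $X\upharpoonright V = (Y\upharpoonright V) \cup (Z\upharpoonright V)$; for the converse I pull back a split $Y' \cup Z' = X\upharpoonright V$ to $X$ by setting $Y := \{s \in X \mid s\upharpoonright V \in Y'\}$ and analogously $Z$, verifying $Y\upharpoonright V = Y'$, $Z\upharpoonright V = Z'$ and $Y \cup Z = X$. The universal quantifier case reduces to a one-line calculation using the identity $X(A/x)\upharpoonright (V \cup \{x\}) = (X\upharpoonright V)(A/x)$ together with the IH for $\psi$ with free-variable bound $V \cup \{x\}$.

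The main obstacle is the existential quantifier case $\exists x\,\psi$ (and, verbatim, $\exists\alpha\,\psi$), because a witnessing function $F : X \to A$ need not be constant on $V$-equivalence classes; consequently $X(F/x)\upharpoonright (V \cup \{x\})$ may strictly contain every team of the form $(X\upharpoonright V)(G/x)$. The direction $\mA \models_{X\upharpoonright V} \exists x\,\psi \Rightarrow \mA \models_X \exists x\,\psi$ is painless: given $G \colon X\upharpoonright V \to A$, define $F(s) := G(s\upharpoonright V)$; a direct computation yields $X(F/x)\upharpoonright (V \cup \{x\}) = (X\upharpoonright V)(G/x)$, and the IH for $\psi$ closes the case. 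For the reverse direction, given $F \colon X \to A$ I fix a section $\sigma \colon X\upharpoonright V \to X$ of the restriction map and put $G(s') := F(\sigma(s'))$, so that $(X\upharpoonright V)(G/x) \subseteq X(F/x)\upharpoonright (V \cup \{x\})$. To turn this inclusion into $\mA \models_{(X\upharpoonright V)(G/x)} \psi$ I appeal to the fact that $\bd$ is downward closed under subteams, which I would establish as a brief preliminary lemma by a separate routine induction on $\varphi$ (or fold into a simultaneous induction with the present statement). The Boolean existential $\exists\alpha\,\psi$ is handled identically, with $A$ replaced by $\{\bot,\top\}$.
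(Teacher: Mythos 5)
Your proof is correct and follows essentially the same route as the paper, which simply defers to the standard structural induction for dependence logic (V\"a\"an\"anen, Lemma~3.27) augmented by one analogous case for $\exists\alpha$. You also correctly isolate the only non-routine point---that the existential cases under the strict $F\colon X\to A$ semantics need downward closure under subteams to collapse $X(F/x)\upharpoonright(V\cup\{x\})$ to a team of the form $(X\upharpoonright V)(G/x)$---which is exactly the device used in the cited proof.
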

\begin{proof}
The proof is essentially the same as the proof of
the corresponding result for dependence logic,
\cite[Lemma~3.27]{va07},
with just one additional analogous case for the existential Boolean quantifiers.
\end{proof}
Boolean dependence logic is a conservative extension of first-order logic.
\begin{proposition}\label{FO extensionbd}
Let $\varphi$ be a formula of \bd without Boolean variables, i.e.,~$\varphi$ is syntactically a first-order formula. Then for all models $\mA$, teams $X$ of $\mA$ and assignments $s$ of $\mA$:
\begin{enumerate}
\item $\mA\models _{\{s\}}\varphi$ \quad iff\quad $\mA,s\models_{\fo}\varphi$.
\item $\mA\models _X\varphi$ \quad iff\quad $\mA,t\models_{\fo}\varphi$ for every $t \in X$.
\end{enumerate}
\end{proposition}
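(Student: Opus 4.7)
The plan is to prove both claims simultaneously by structural induction on $\varphi$. Since $\varphi$ is syntactically first-order, the only cases to handle are the atomic and negated atomic first-order cases, conjunction, disjunction, and the first-order universal and existential quantifier cases; the Boolean atoms, dependence atoms, and existential Boolean quantifier simply never arise. Before starting, I would record the easy auxiliary fact that the empty team satisfies every $\bd$-formula (an independent induction on $\varphi$), since this is needed to make the disjunction case of part (1) go through cleanly.

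For part (1), on a singleton team $\{s\}$ each clause of Definition~\ref{semantics of bd} reduces to its Tarskian counterpart. The atomic and negated atomic cases are immediate from the definition, and conjunction is trivial. For disjunction, any split $Y\cup Z=\{s\}$ has $Y,Z\in\{\emptyset,\{s\}\}$, so by the auxiliary empty-team fact the split reduces to $\mA\models_{\{s\}}\varphi$ or $\mA\models_{\{s\}}\psi$, matching the Tarskian disjunction. For $\exists x\,\psi$, a function $F\colon\{s\}\to A$ is just a choice of witness $a=F(s)\in A$, and $\{s\}(F/x)=\{s(a/x)\}$, so the IH gives the equivalence with the Tarskian $\exists$. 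For $\forall x\,\psi$, we have $\{s\}(A/x)=\{s(a/x)\mid a\in A\}$; applying part (2) of the IH to this team we obtain the equivalence with the Tarskian $\forall$.

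For part (2), I would again proceed by induction, invoking part (1) at the atomic leaves. The atomic, negated atomic, conjunction, and universal cases are direct because the team-semantic clauses already quantify over all $s\in X$. The two delicate cases are disjunction and existential quantifier. For disjunction, the forward direction uses a split $X=Y\cup Z$ and the IH to see that each $t\in X$ witnesses $\varphi$ or $\psi$ in $\fo$; the backward direction partitions $X$ into $Y=\{t\in X\mid \mA,t\models_{\fo}\varphi\}$ and $Z=X\setminus Y$ and then invokes the IH on each piece. For the existential quantifier, the forward direction reads off pointwise witnesses from the chosen $F$ via the IH; the backward direction picks, for each $t\in X$, a witness $a_t\in A$ with $\mA,t(a_t/x)\models_{\fo}\psi$ and defines $F(t)\dfn a_t$.

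The only genuine subtlety is in the backward direction of the existential case of part (2), where we must turn pointwise $\fo$-witnesses into a single team-semantic witness function $F\colon X\to A$; this is a straightforward use of choice, but it is the one place where the team semantics does nontrivial work. Everything else is bookkeeping, and the proof is essentially the same as the corresponding argument for dependence logic in \cite{va07}, with no new cases needed because $\varphi$ contains no Boolean constructs.
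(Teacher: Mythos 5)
Your proof is correct and amounts to the standard flatness argument for first-order formulas under team semantics, which is exactly the content of the result the paper cites instead of proving (\cite[Corollary~3.32]{va07}): the simultaneous induction on both claims, the auxiliary empty-team lemma needed for the disjunction case over a singleton team, and the appeal to part (2) of the induction hypothesis in the universal-quantifier case of part (1) are all precisely what that argument requires. Since $\varphi$ contains no Boolean constructs, no new cases arise and your treatment matches the intended proof.
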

\begin{proof}
Follows directly from the corresponding result for dependence logic, i.e., 
\cite[Corollary~3.32]{va07}.
\end{proof}
The next two lemmas state that in Boolean dependence logic one can freely substitute subformulae by equivalent formulae and free variables by fresh variables with the same extension.
\begin{lemma}\label{substitution_bd}
Suppose that $\varphi$, $\psi$ and $\vartheta$ are \bd-formulae such that $\fr(\varphi)=\fr(\psi)$ and
$
\varphi\equiv\psi.
$
%
Then
\[
\vartheta \equiv \vartheta(\varphi/\psi),
\]
where $\vartheta(\varphi/\psi)$ is the formula obtained from $\vartheta$ by substituting each occurrence of $\psi$ by $\varphi$.
\end{lemma}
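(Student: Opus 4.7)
The plan is a straightforward structural induction on the formula $\vartheta$, leveraging the compositional team semantics of \bd given in Definition~\ref{semantics of bd}. The proof is essentially routine once the setup is correct, so my focus will be on making sure the free-variable hypothesis $\fr(\varphi)=\fr(\psi)$ is used to handle quantifiers cleanly, and on noting how the base case captures the only point at which the hypothesis $\varphi\equiv\psi$ is actually invoked.

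First, I would fix a $\tau$-model $\mA$ and a team $X$ of $\mA$, and prove by induction on $\vartheta$ that $\mA\models_X \vartheta$ iff $\mA\models_X \vartheta(\varphi/\psi)$. For the base case, $\vartheta$ is an atomic or negated-atomic formula (including a Boolean literal $\alpha$, $\neg\alpha$, or a dependence atom $\dep[x_1,\dots,x_n,\alpha]$). There are then two sub-cases: either $\vartheta$ is syntactically equal to $\psi$, in which case $\vartheta(\varphi/\psi)=\varphi$ and the equivalence is exactly the hypothesis $\varphi\equiv\psi$; or $\vartheta\neq\psi$, in which case no substitution is performed and $\vartheta(\varphi/\psi)=\vartheta$, so the equivalence is trivial. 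I should be mindful that $\psi$ itself need not be atomic, but that does not affect this case: if $\vartheta$ is atomic and $\vartheta\neq\psi$, the substitution operation is vacuous on $\vartheta$.

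For the inductive step, I would treat the propositional connectives and the quantifiers in turn, each time appealing to the induction hypothesis applied to the immediate subformulae, and possibly to Proposition~\ref{prop:teamrestriction}. For $\vartheta=\vartheta_1\wedge\vartheta_2$ the argument is immediate from the clause $\mA\models_X(\vartheta_1\wedge\vartheta_2)$ iff $\mA\models_X\vartheta_1$ and $\mA\models_X\vartheta_2$, and the case for $\vee$ is equally direct via the splitting of $X$ into $Y\cup Z$. For $\vartheta=\exists x\,\vartheta_0$ (and symmetrically for $\forall x$ and $\exists\alpha$), I apply the IH to $\vartheta_0$ on the derived team $X(F/x)$ (respectively $X(A/x)$ or $X(F/\alpha)$): there exists a witnessing $F$ making $\mA\models_{X(F/x)}\vartheta_0$ iff there exists one making $\mA\models_{X(F/x)}\vartheta_0(\varphi/\psi)$, which is exactly the semantic clause for $\exists x$ applied to $\vartheta(\varphi/\psi)$.

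The only genuine subtlety — and the place where I expect to have to argue most carefully — concerns variable capture when $\psi$ occurs under a quantifier that binds a variable free in $\varphi$. However, the hypothesis $\fr(\varphi)=\fr(\psi)$ rules this out: any quantifier in $\vartheta$ that would capture a variable free in $\varphi$ would equally capture that variable in $\psi$, so the set of free variables of the relevant subformula is preserved by the substitution. Combined with Proposition~\ref{prop:teamrestriction}, which ensures that satisfaction depends only on the interpretation of free variables, this means the derived teams on which the IH is applied are exactly the same for $\vartheta_0$ and $\vartheta_0(\varphi/\psi)$, and the induction goes through without modification. This completes the proof.
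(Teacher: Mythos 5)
Your proof is correct and follows essentially the same route as the paper, which simply defers to the corresponding substitution lemma for dependence logic (\cite[Lemma 3.25]{va07}); that proof is exactly the structural induction you describe, with the hypothesis $\varphi\equiv\psi$ invoked at occurrences of $\psi$ and the condition $\fr(\varphi)=\fr(\psi)$ together with Proposition~\ref{prop:teamrestriction} handling the quantifier cases. The only cosmetic point is that the induction statement should be quantified over all teams (and the sub-case $\vartheta=\psi$ allowed for compound $\vartheta$ too), which your treatment of the derived teams $Y$, $Z$ and $X(F/x)$ already implicitly assumes.
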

\begin{proof}
The proof is essentially the same as the proof of
the corresponding lemma for dependence logic,
\cite[Lemma 3.25]{va07}.
\end{proof}
\begin{lemma}\label{change_variables}
Let $\varphi$ be a $\bd$-formula and $x_1,\dots,x_n,\alpha_1,\dots,\alpha_m$ the free variables of $\varphi$. Let $y_1,\dots,y_n,\beta_1,\dots,\beta_m$ be distinct variables that do not occur in $\varphi$. If $s$ is an assignment with domain $\{x_1,\dots,x_n,\alpha_1,\dots,\alpha_m\}$, let $s'$ denote the assignment with domain $\{y_1,\dots,y_n,\beta_1,\dots,\beta_m\}$ defined as
\[
s'(\chi) =
\left\{
	\begin{array}{ll}
		s(x_i)  & \mbox{if } \chi= y_i, \\
		s(\alpha_1) & \mbox{if } \chi=\beta_i.
	\end{array}
\right.
\]
If $X$ is a team with domain $\{x_1,\dots,x_n,\alpha_1,\dots,\alpha_m\}$, define that $X'\dfn \{s' \mid s\in X \}$.
Now, for every model $\mA$ and team $X$ of $\mA$ with domain $\fr(\varphi)$ it holds that
\[
\mA\models_X \varphi \quad\text{ iff }\quad \mA\models_{X'}\varphi(y_1/x_1,\dots,y_n/x_n,\beta_1/\alpha_1,\dots,\beta_m/\alpha_m),
\]
where $\varphi(y_1/x_1,\dots,y_n/x_n,\beta_1/\alpha_1,\dots,\beta_m/\alpha_m)$ is obtained from $\varphi$ by substituting each free occurrence of $x_i$ by $y_i$ and $\alpha_j$ by $\beta_j$, $i\leq n$, $j\leq m$.
\end{lemma}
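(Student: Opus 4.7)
The plan is to proceed by structural induction on $\varphi$, relying throughout on the fact that $s\mapsto s'$ is a bijection between $X$ and $X'$ whose behaviour on the renamed variables is fixed by the definition, i.e.\ $s'(y_i)=s(x_i)$ and $s'(\beta_j)=s(\alpha_j)$. Let $\varphi^*$ denote $\varphi(y_1/x_1,\dots,y_n/x_n,\beta_1/\alpha_1,\dots,\beta_m/\alpha_m)$.

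First, I would handle the atomic and negated-atomic cases directly. For $x_i=x_j$, $R(x_{i_1},\dots,x_{i_k})$ and their negations, the equivalence is immediate because $s'(y_i)=s(x_i)$ for every $s\in X$. For Boolean literals $\alpha_j$ and $\neg\alpha_j$, the equivalence $\mA\models_X\alpha_j \Leftrightarrow \mA\models_{X'}\beta_j$ follows likewise from $s'(\beta_j)=s(\alpha_j)$. For dependence atoms $\dep[x_{i_1},\dots,x_{i_k},\alpha_j]$, the functional-dependence condition on $X$ translates directly into the functional-dependence condition on $X'$ via the bijection, since $s(x_{i_\ell})=t(x_{i_\ell})$ iff $s'(y_{i_\ell})=t'(y_{i_\ell})$, and similarly for the consequent.

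The Boolean connective cases are routine: conjunction is immediate from the IH, while for $\varphi_1\vee\varphi_2$, I would use that any splitting $X=Y\cup Z$ induces a splitting $X'=Y'\cup Z'$ and conversely any splitting $X'=Y_1\cup Z_1$ gives rise, via the inverse bijection, to a splitting of $X$; the IH applied to the two components then completes the case. For the quantifier cases, I would use that the new variables $y_1,\dots,y_n,\beta_1,\dots,\beta_m$ do not occur in $\varphi$, so the bound variable $z$ in $\exists z\,\psi$, $\forall z\,\psi$ or $\exists\alpha\,\psi$ is distinct from all renamed variables. Then for $\exists z\,\psi$ a choice function $F\colon X\to A$ corresponds to the choice function $F'\colon X'\to A$ defined by $F'(s')\dfn F(s)$, and one checks $(X(F/z))'=X'(F'/z)$, so that the IH applied to $\psi$ transfers satisfaction both ways; the case $\exists\alpha\,\psi$ is analogous. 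For $\forall z\,\psi$, the key identity is $(X(A/z))'=X'(A/z)$, which again follows from the bijection being the identity on $z$, and the IH finishes the case.

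The main obstacle is simply bookkeeping: verifying the identities $(X(F/z))'=X'(F'/z)$ and $(X(A/z))'=X'(A/z)$ and carrying the bijection back and forth correctly in the disjunction case. No new ideas beyond those used for the corresponding renaming lemma in standard dependence logic (cf.\ the proof of Proposition~\ref{prop:teamrestriction}) are needed; the Boolean-variable and Boolean-quantifier cases are exact syntactic mirrors of the first-order ones, and the dependence-atom case is handled uniformly because the renaming affects antecedents and consequents consistently across all assignments in the team.
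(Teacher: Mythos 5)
Your proof is correct and is essentially the argument the paper has in mind: the paper simply declares the lemma ``straightforward by Proposition~\ref{prop:teamrestriction} and the semantics of Boolean dependence logic,'' and your structural induction via the bijection $s\mapsto s'$ fills in exactly those details. The only point worth making explicit is that in the disjunction and quantifier cases the subformulae have free-variable sets differing from $\fr(\varphi)$, so you must either strengthen the induction hypothesis to teams whose domain merely \emph{contains} the free variables or invoke Proposition~\ref{prop:teamrestriction} at each step --- which is precisely why the paper cites it.
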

\begin{proof}
Straightforward by Proposition \ref{prop:teamrestriction} and the semantics of Boolean dependence logic.
\end{proof}

\section{Partially-ordered connectives}\label{pocdef}
%
We will first give a short exposition to the origin of partially-ordered connectives. We will then recall the definition of partially-ordered connectives familiar from the literature. After this, we will introduce a notational variant of partially-ordered connectives based on Boolean variables. Finally, we will show that these two variants, in a rather strong sense, coincide.
\subsection{Partially-ordered connectives by Sandu and V\"a\"an\"anen}\label{pocsava}
Partially-ordered connectives were introduced in \cite{sava92} by Sandu and V\"a\"an\"anen. The starting point for their definition was the Henkin quantifier
\[
	\poc[c]{\forall x \quad\exists y\\
	\forall y \quad \exists v},
\]
and the idea to replace the existential quantifiers in the Henkin quantifier by disjunctions. Hence they arrived to the following expression
\[
	\poc[c]{\forall x \quad\bigvee_{i\in\{0,1\}}\\
	\forall y \quad \bigvee_{j\in\{0,1\}}} \big(\varphi_{ij}(x,y)\big)_{i,j\in\{0,1\}},
\]
which they call the partially-ordered connective $D_{1,1}$. The connective $D_{1,1}$ bounds a tuple of formulae of length $4$. The subscript of $D_{1,1}$ reveals the number of rows and the number of universal quantifiers in a given row in the connective. For example, the partially-ordered connective $D_{4,3,3}$ 
is an expression of the form 
\[
	\poc[lc]{\forall x_{1} \forall x_2 \forall x_3 \forall x_4 &\bigvee_{i\in\{0,1\}}\\
	\forall y_{1} \forall y_2 \forall y_3 &\bigvee_{j\in\{0,1\}}\\
	\forall z_{1} \forall z_2 \forall z_3 &\bigvee_{k\in\{0,1\}}},
\]
that binds a tuple of formulae $(\varphi_{ijk}(x_1,x_2,x_3,x_4,y_1,y_2,y_3,z_1,z_2,z_3))_{i,j,k\in\{0,1\}}$. Hence, more generally, a partially-ordered connective, according to Sandu and V\"a\"an\"anen, is an expression of the form 
\[
	D=\poc[cccc]{\forall x_{11}&\dots&\forall x_{1n_1}&\bigvee_{b_1\in\{0,1\}}\\
	\vdots&&\vdots&\vdots\\
	\forall x_{m1}&\dots&\forall x_{mn_m}&\bigvee_{b_m\in\{0,1\}}}
\]
that binds a tuple $\gamma=(\varphi_{\vec{b}})_{\vec{b}\in\{0,1\}^m}$ of formulae\footnote{Sandu and Väänänen consider also even more general versions of partially-ordered connectives in which the indices $b_i$ range over a set of size $k\in \mathbb{N}$. However, these can be easily simulated by the partially-ordered connectives described here.}.
Note that it is assumed that the variables $x_{ij}$ are distinct.
Denoting the tuples
$(x_{i1},\ldots,x_{i{n_i}})$ by $\vec{x}_i$, $1\le i\le m$,
the semantics of the partially-ordered connective $D$ can be written as follows:
\begin{align*}
	\mA,s\models D\,\gamma
	&&\Leftrightarrow&&& \text{there exist functions } g_i:A^{n_i}\to\{0,1\}, 1\leq i \leq m,\\
	&&{}&&&\text{s.t. for all $\vec{a}_i\in A^{n_i}$, $1\leq i \leq m$, } 
	\mA,s'\models\varphi_{\vec{b}}\, \text{, where}\\
	&&{}&&& s'=s(\vec{a}_1/\vec{x}_1,\ldots, \vec{a}_m/\vec{x}_m)
	\text{ and } b_i=g_i(\vec{a}_i),\; 1\le i\le m.
\end{align*}
We denote the set of all such partially-ordered connectives $D$ by $\mathsf{D}$. By \fopocsv we denote the extension of first-order logic by all partially-ordered connectives
$D\in\mathsf{D}$, i.e., the syntax of $\fopocsv(\tau)$ is defined from $\tau$ by the following grammar: 
\[\begin{array}{lcl}
\varphi &::=& x_1=x_2 \mid \neg x_1=x_2\mid R(x_1,\dots,x_n)\mid \neg R(x_1,\dots,x_n) \mid \\
&& (\varphi \vee \varphi) \mid (\varphi \wedge \varphi) \mid D\vec{\varphi}\mid \neg D\vec{\varphi} \mid\forall x \varphi \mid \exists x \varphi
\end{array}\]
where $D\in \mathsf{D}$ and $\vec{\varphi}$ is a vector of formulae of the appropriate length. By $\fopocpsv$, we denote the fragment of $\fopocsv$ that allows only positive occurrences of partially-ordered connectives, i.e., the syntax of $\fopocpsv$ is defined as the syntax for $\fopocsv$ but without the rule $\neg D\vec{\varphi}$. 
Furthermore, by \pocsvfo and \pocsvqf we denote the logics consisting of formulae of the form
\[
D\,(\varphi_{\vec{b}})_{\vec{b}\in\{0,1\}^m},
\]
where $D\in\mathsf{D}$ and
$\varphi_{\vec{b}}$, for every $\vec{b}\in\{0,1\}^m$, is a formula of first-order logic or a quantifier free formula of first-order logic, respectively. The semantics for these logics is defined in terms of models and assignments in the usual way, i.e., in the same manner as for first-order logic, with the additional clause for the partially-ordered connectives $D$.

The expressive power of logics with partially-ordered connectives was studied extensively by
Hella, Sevenster and Tulenheimo in \cite{hesetu08}. They showed that \pocsvqf can be used as
logical formalism for describing \emph{constraint satisfaction problems} (CSP): a natural syntactic restriction to 
the quantifier free formulas leads to a fragment of \pocsvqf that captures exactly the class of
all CSP (with a fixed target structure). Furthermore, another syntactic restriction leads to a fragment
that has the same expressive power as the logic MMSNP. (We refer to \cite{FeVar} for the definitions
of CSP and MMSNP.) The logic \pocsvqf itself was shown to have the same expressive power as
strict NP (SNP).\footnote{Strict NP is also known as strict $\Sigma^1_1$.} 
Strict NP is the fragment of $\eso$ that consists of all formulas of the
form $\exists S_1\ldots\exists S_n\forall x_1\ldots\forall x_m\,\varphi$, where $\varphi$
is a quantifier free first-order formula in a relational vocabulary.

\begin{theorem}[\cite{hesetu08}]\label{pocqf-snp}
$\pocsvqf\equiv\mathrm{SNP}$.
\end{theorem}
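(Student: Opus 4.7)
My plan is to prove both inclusions of $\pocsvqf \equiv \mathrm{SNP}$ by direct syntactic translation, exploiting the observation that the choice functions of a partially-ordered connective play the role of characteristic functions of existentially-quantified relations.

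For $\pocsvqf \le \mathrm{SNP}$, given a POC formula $D(\varphi_{\vec{b}})$ whose rows are $\forall \vec{x}_i\, \bigvee_{b_i \in \{0,1\}}$ for $i = 1, \dots, m$, I would introduce $m$ fresh existentially-quantified relation symbols $S_i$ of arity $n_i = |\vec{x}_i|$ to stand for the choice functions $g_i$, and produce
$$\exists S_1 \cdots \exists S_m\, \forall \vec{x}_1 \cdots \forall \vec{x}_m\, \bigvee_{\vec{b} \in \{0,1\}^m}\Bigl(\bigwedge_{i=1}^m \lambda_i(b_i,\vec{x}_i) \wedge \varphi_{\vec{b}}\Bigr),$$
where $\lambda_i(1, \vec{x}_i) \dfn S_i(\vec{x}_i)$ and $\lambda_i(0, \vec{x}_i) \dfn \neg S_i(\vec{x}_i)$. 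The inner formula is quantifier-free, so the output lies in $\mathrm{SNP}$, and equivalence with the POC semantics is immediate by taking $S_i \dfn \{\vec{a} : g_i(\vec{a}) = 1\}$ and vice versa.

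For $\mathrm{SNP} \le \pocsvqf$, given $\Phi = \exists \vec{S}\, \forall \vec{x}\, \varphi$ with $\varphi$ quantifier-free, the obstacle is that a POC reveals each $g_i$ only at one universally-quantified tuple per row, whereas $\varphi$ may contain many occurrences $S_i(\vec{t}_{i,1}), \dots, S_i(\vec{t}_{i,r_i})$ of $S_i$ involving $\vec{x}$. My approach is to create one POC row $\forall \vec{z}^{(i,j)}\, \bigvee_{b_{i,j}}$ for each such occurrence (with $\vec{z}^{(i,j)}$ a fresh tuple of length $k_i = \mathrm{arity}(S_i)$), plus one extra row $\forall \vec{x}\, \bigvee_{b_0}$ for the ambient universals, and define
$$\varphi_{\vec{b}} \dfn \mathrm{Cons}(\vec{z}, \vec{b}) \wedge \bigl(\mathrm{Match}(\vec{z}, \vec{x}) \to \varphi^*(\vec{x}, \vec{b})\bigr),$$
where $\mathrm{Match}(\vec{z}, \vec{x}) \dfn \bigwedge_{i,j} \vec{z}^{(i,j)} = \vec{t}_{i,j}(\vec{x})$; the formula $\varphi^*(\vec{x}, \vec{b})$ is obtained from $\varphi$ by replacing each atom $S_i(\vec{t}_{i,j})$ by $\top$ or $\bot$ according to whether $b_{i,j}$ equals $1$ or $0$; and $\mathrm{Cons}(\vec{z}, \vec{b}) \dfn \bigwedge_{i,\, j < j',\, b_{i,j} \neq b_{i,j'}} \vec{z}^{(i,j)} \neq \vec{z}^{(i,j')}$. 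Each conjunct is visibly quantifier-free, so $\varphi_{\vec{b}}$ is a legal \pocsvqf body.

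The main obstacle, and the whole reason for duplicating rows, is to force the a priori independent choice functions $g_{i,1}, \dots, g_{i,r_i}$ to jointly represent one relation $S_i$. The clause $\mathrm{Cons}$ achieves this by universal quantification: had $g_{i,j}$ and $g_{i,j'}$ disagreed at some tuple $\vec{z}^*$, the universal instantiation $\vec{z}^{(i,j)} = \vec{z}^{(i,j')} = \vec{z}^*$ would falsify $\mathrm{Cons}$. Granting this, correctness is direct. From witnesses $\vec{S}$ for $\Phi$, set each $g_{i,j}$ to the characteristic function of $S_i$; then $\mathrm{Cons}$ is trivial and, whenever $\mathrm{Match}$ fires, $\varphi^*$ coincides with $\varphi(\vec{x}, \vec{S})$. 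Conversely, from POC witnesses, $\mathrm{Cons}$ guarantees well-defined $S_i \dfn \{\vec{a} : g_{i,1}(\vec{a}) = 1\}$, and for any fixed $\vec{x}$ the instantiation $\vec{z}^{(i,j)} \dfn \vec{t}_{i,j}(\vec{x})$ forces $\mathrm{Match}$, so $\varphi(\vec{x}, \vec{S})$ holds.
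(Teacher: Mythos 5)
The paper does not prove this theorem at all --- it is imported verbatim from Hella, Sevenster and Tulenheimo \cite{hesetu08} with only a citation --- so there is no in-paper argument to measure you against; what you have written is a self-contained reconstruction of the cited result, and it is correct. Your direction $\pocsvqf\le\mathrm{SNP}$ is the standard observation that the row functions $g_i$ are characteristic functions of existentially quantifiable relations, with the guard $\bigwedge_i\lambda_i(b_i,\vec{x}_i)$ selecting the unique disjunct consistent with $\vec{S}$; this works because the row variables are pairwise distinct, so every tuple $(\vec{a}_1,\dots,\vec{a}_m)$ is realized. Your direction $\mathrm{SNP}\le\pocsvqf$ correctly isolates the real difficulty --- a row reveals $g_i$ at only one argument tuple per universal instantiation, while $\varphi$ may query $S_i$ at several tuples built from $\vec{x}$ --- and the one-row-per-occurrence device together with the globally quantified coherence clause $\mathrm{Cons}$ and the relativization via $\mathrm{Match}$ is exactly the right fix; I checked both implications and they go through. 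Only cosmetic caveats remain: $\top$ and $\bot$ are not literals of the quantifier-free fragment and should be rendered as $x=x$ and $\neg\, x=x$ (as the paper itself does in Lemma \ref{bpoctopoc}); degenerate cases (nullary relation symbols, or an SNP sentence with no universal first-order quantifiers, which would produce an empty row) need a word of padding with a dummy universally quantified variable; and strictly speaking the tuple disequality $\vec{z}^{(i,j)}\neq\vec{z}^{(i,j')}$ should be spelled out as a disjunction of component disequalities, which the grammar of $\fopocsv$ permits. None of these affects the substance.
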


In addition, it was shown in \cite{hesetu08} that \fopocsv has the zero-one law.
\emph{Zero-one law} is a property of logics defined as follows. Let $\varphi$ be a $\tau$-sentence
of some logic, where $\tau$ is a relational vocabulary. Let $\Str_n(\tau)$ be the set
of all $\tau$-structures with domain $\{0,\ldots,n-1\}$. The limit probability of $\varphi$ is
\[
 \mu(\varphi):=\lim_{n\to\infty}\frac{|\{\mA\in\Str_n(\tau)\mid \mA\models\varphi\}|}{|\Str_n(\tau)|}.
\]
A logic $\mathcal{L}$ has the zero-one law if $\mu(\varphi)$ exists and is equal to $0$ or $1$ for every
$\mathcal{L}(\tau)$-sentence $\varphi$ in a relational vocabulary $\tau$.
(See \cite{ebbinghaus99} for more information on zero-one laws.)

\begin{theorem}[\cite{hesetu08}]\label{fopoc-01}
\fopocsv has the zero-one law. 
\end{theorem}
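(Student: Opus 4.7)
The plan is to reduce \fopocsv to \fo modulo the almost-sure \fo-theory of finite $\tau$-structures, and then invoke Fagin's classical zero-one law for \fo. For a finite relational vocabulary $\tau$, let $T_\infty$ denote this almost-sure theory, i.e., the set of all extension axioms for $\tau$. Then $T_\infty$ is complete, and its countable model $\mathcal{R}_\tau$ is $\omega$-categorical, ultra-homogeneous, admits quantifier elimination, and realises only finitely many atomic $n$-types for each $n\in\Nset$. By Fagin's theorem, every \fo-sentence $\varphi$ satisfies $\mu(\varphi)\in\{0,1\}$, and $\mu(\varphi)=1$ iff $\mathcal{R}_\tau\models\varphi$.

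The proof proceeds by induction on \fopocsv-formulas $\varphi(\vec u)$: one constructs an \fo-formula $\varphi^*(\vec u)$ such that $\mathcal{R}_\tau\models_s\varphi\Leftrightarrow\mathcal{R}_\tau\models_s\varphi^*$ for every assignment $s$, with the further property that the equivalence is witnessed by a finite fragment of $T_\infty$ and hence transfers to all sufficiently random finite structures. Atomic cases, Boolean combinations, and first-order quantifiers are routine. The decisive case is $\varphi=D\vec\psi$ with $D=D_{n_1,\ldots,n_m}$, where by induction each $\psi_{\vec b}$ is replaced by an \fo-formula $\psi^*_{\vec b}$. The unfolded semantics asserts the existence of functions $g_i\colon R^{n_i}\to\{0,1\}$ with $\mathcal{R}_\tau\models\psi^*_{g_1(\vec a_1)\cdots g_m(\vec a_m)}(\vec a)$ for all tuples $\vec a_1,\ldots,\vec a_m$. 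Since $\mathcal{R}_\tau$ has only finitely many atomic types $T^i_1,\ldots,T^i_{k_i}$ of $n_i$-tuples, the aim is to show that such a witness exists iff a \emph{type-invariant} witness does: iff there is a finite choice of bits $c_{i,j}\in\{0,1\}$ with $i\leq m$ and $j\leq k_i$ such that whenever each $\vec a_i$ has type $T^i_{j_i}$, $\psi^*_{c_{1,j_1}\cdots c_{m,j_m}}(\vec a)$ holds. This reduces $D\vec\psi$ to a finite disjunction of universal \fo-formulas; the case $\neg D\vec\psi$ is handled dually.

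The principal obstacle is justifying this canonicalisation. Each $g_i$ depends only on its own input $\vec a_i$, yet the universal condition has to be preserved over all joint configurations, so a priori a witnessing family need not be type-invariant. The argument exploits $\omega$-categoricity and ultra-homogeneity: automorphisms of $\mathcal{R}_\tau$ permute tuples of the same atomic type, so one can attempt to smooth a given witness component by component into a type-invariant one without destroying the universal condition. The subtlety is to do this simultaneously in all $m$ coordinates while controlling the joint atomic types that are realised; a Ramsey- or back-and-forth-style argument, together with the extension axioms guaranteeing every joint type is realised in $\mathcal{R}_\tau$, should supply the required canonicalisation. Once it is in place, the induction closes and Fagin's zero-one law applied to $\varphi^*$ delivers the statement.
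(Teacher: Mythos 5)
First, note that the paper does not actually prove this theorem: it is imported verbatim from \cite{hesetu08}, so there is no in-paper argument to compare yours against. Judged on its own terms, your proposal has the right overall shape --- eliminate the connectives $D$ in favour of \fo over structures satisfying enough extension axioms, then invoke the first-order zero-one law --- but it concentrates the entire mathematical content of the theorem into the ``canonicalisation'' step and then leaves that step unproved. The claim that a witnessing family $(g_1,\ldots,g_m)$ can be replaced by a type-invariant one is exactly what separates \fopocsv (which has the zero-one law) from full \eso (which does not), since the only structural difference is that the Skolem functions here have Boolean range and each sees only its own block of variables. Saying that ``a Ramsey- or back-and-forth-style argument \ldots should supply the required canonicalisation'' is therefore not a sketch with a routine step omitted; it is the theorem restated as a hope. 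In particular, the naive extraction (pick one representative tuple per component atomic type and read off its $g_i$-value) fails because a single family of representatives realises only one joint atomic type, whereas the type-invariant witness must be simultaneously good for every joint type compatible with the given component types; you give no mechanism for resolving this conflict.

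Second, the tools you invoke --- $\omega$-categoricity, ultra-homogeneity and the automorphism group of the countable random structure $\mathcal{R}_\tau$ --- live on the infinite limit structure, whereas the limit probability $\mu(\varphi)$ is computed over finite structures. For the purely first-order parts of the induction the transfer via finitely many extension axioms is fine, but the clause for $D\vec\psi$ is a second-order existential statement about the structure at hand, and $\Sigma^1_1$ truth does not transfer between $\mathcal{R}_\tau$ and almost all finite models (this is precisely why \eso fails to have a zero-one law even though $\mathcal{R}_\tau$ has a perfectly definite \eso theory). So the canonicalisation must be carried out directly on finite structures satisfying the $k$-extension axioms for a suitable $k$ depending on $\varphi$, using only those axioms; such finite structures are rigid or nearly so, and the homogeneity/automorphism argument you sketch is unavailable there. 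Until the canonicalisation lemma is stated precisely and proved in that finite setting (with the free parameters $\vec u$ absorbed into the atomic types), the proposal does not establish the theorem; for a complete argument one has to go to \cite{hesetu08}.
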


%
\subsection{Partially-ordered connectives with Boolean variables}
We will deviate from the definitions of \cite{sava92} in two ways: First, we will replace the 
disjunctions $\bigvee_{b_i\in\{0,1\}}$ by existentially quantified Boolean variables $\exists\alpha_i$;
this makes it easier to relate logics with partially-ordered connectives to fragments of Boolean dependence logic. Second, 
in order to simplify the proofs in Section~\ref{equiv}, we will relax the restriction that the variables 
$x_{ij}$ should be distinct.
%
This approach to partially-ordered connectives is closely related to the narrow Henkin quantifier introduced by Blass and Gurevich \cite{blassgure}.

\begin{definition}
Let $\vec{x}_i=(x_{i1},\ldots,x_{in_i})$, $1\le i\le m$, be tuples of first-order variables, and
let $\alpha_i$, $1\le i\le m$, be distinct Boolean variables. Then
\[
	C=\poc[cccc]{\forall \vec{x}_1&\exists\alpha_1\\
	\vdots&\vdots\\
	\forall \vec{x}_m&\exists\alpha_m}
\]
is a \emph{partially-ordered connective}. The \emph{pattern} of $C$ is
$\pi=(n_1,\ldots,n_m,E)$, where $E$ describes the identities between the 
variables in the tuples $\vec{x}_1,\ldots,\vec{x}_m$, i.e., 
\[
	E=\{(i,j,k,l)\mid 1\le i,k\le m, 1\le j\le n_i,1\le l\le n_k, x_{ij}=x_{kl}\}.
\]
If $C$ is a partially-ordered connective with pattern $\pi$, we denote the connective $C$ 
by $N_\pi \,\vec{x}_1\alpha_1\ldots\vec{x}_m\alpha_m$.
\end{definition}

\begin{definition}
Let $\tau$ be a relational vocabulary.
Syntax of $\fopoc(\tau)$ is defined from $\tau$ by the following grammar:
\[\begin{array}{lcl}
\varphi &::=& \alpha \mid \neg\alpha \mid x_1=x_2 \mid \neg x_1=x_2\mid R(x_1,\dots,x_n)\mid \\
&&  \neg R(x_1,\dots,x_n) \mid(\varphi \vee \varphi) \mid (\varphi \wedge \varphi) \mid\forall x \varphi \mid \exists x \varphi\mid\\
&&N_\pi \,\vec{x}_1\alpha_1\ldots\vec{x}_m\alpha_m\, \varphi\mid 
\neg N_\pi \,\vec{x}_1\alpha_1\ldots\vec{x}_m\alpha_m\, \varphi.
\end{array}\]
For $N_\pi \,\vec{x}_1\alpha_1\ldots\vec{x}_m\alpha_m\, \varphi$ to be a syntactically correct formula, we require that the identities between the variables in $\vec{x}_1,\dots, \vec{x}_m$ are exactly those described in $\pi$. Additionally the Boolean variables $\alpha_1,\dots,\alpha_m$ are all required to be distinct.
\end{definition}
$\fopocp$ is the syntactic fragment of $\fopoc$ that allows only positive occurrences of partially-ordered connectives.
In other words, syntax for $\fopocp$ is defined by the grammar
\[\begin{array}{lcl}
\varphi &::=& \alpha \mid \neg\alpha \mid x_1=x_2 \mid \neg x_1=x_2\mid R(x_1,\dots,x_n)\mid \neg R(x_1,\dots,x_n) \mid\\
&&  (\varphi \vee \varphi) \mid (\varphi \wedge \varphi) \mid\forall x \varphi \mid \exists x \varphi\mid N_\pi \,\vec{x}_1\alpha_1\ldots\vec{x}_m\alpha_m\, \varphi.
\end{array}\]
The fragment $\pocfo$ of $\fopoc$ consists of exactly the formulae of the form
\[
N_\pi \,\vec{x}_1\alpha_1\ldots\vec{x}_m\alpha_m\,\varphi,
\]
where $N_\pi \,\vec{x}_1\alpha_1\ldots\vec{x}_m\alpha_m$ is a partially-ordered connective and $\varphi \in \fo$. Analogously, the logic $\pocqf$ consists of exactly the formulae of the form
\[
N_\pi \,\vec{x}_1\alpha_1\ldots\vec{x}_m\alpha_m\,\varphi,
\]
where $N_\pi \,\vec{x}_1\alpha_1\ldots\vec{x}_m\alpha_m$ is a partially-ordered connective and $\varphi$ is a quantifier free formula of $\fo$.

The semantics for these logics is defined in terms of models and assignments
in the usual way, i.e., in the same manner as for first-order logic. The clause for formulae starting with a partially-ordered connective 
$C=N_\pi \,\vec{x}_1\alpha_1\ldots\vec{x}_m\alpha_m$ with pattern $\pi=(n_1,\ldots,n_m,E)$ is 
the following:
%
\setlength{\jot}{0pt}
\begin{align*}
	\mA,s\models C\, \varphi
	&&\Leftrightarrow&&& \text{there exist functions } f_i:A^{n_i}\to\{\bot,\top\}, 1\le i\le m,\\
	&&{}&&&\text{such that for all tuples }\vec{a}_i\in A^{n_i}, 1\le i\le m: \\
	&&{}&&& \text{if $\vec{a}_1\ldots\vec{a}_m$ is of pattern $\pi$, then }\mA,s'\models\varphi, 
	\text{where}\\
	&&{}&&& s'=s(\vec{a}_1/\vec{x}_1,\ldots, \vec{a}_m/\vec{x}_m,
	f_1(\vec{a}_1)/\alpha_1,\ldots,f_m(\vec{a}_m)/\alpha_m).
\end{align*}
%
Here ``$\vec{a}_1\ldots\vec{a}_m$ is of pattern $\pi$" means that
$a_{ij}=a_{kl}$ whenever $(i,j,k,l)\in E$. 
Note that $s'$ is well-defined for all tuples $\vec{a}_1\ldots\vec{a}_m$ that are of pattern $\pi$.

It is straightforward to prove that the expressive powers of $\fopoc$, $\fopocp$, $\pocfo$ and $\pocqf$ coincide with that of $\fopocsv$, $\fopocpsv$, $\pocsvfo$ and $\pocsvqf$, respectively. However the related proofs are quite technical.
%
\begin{lemma}\label{poctobpoc}
For every formula $\varphi\in\fo(\mathsf{D})$ there exists a formula $\varphi^*\in\fopoc$ such that for every model $\mA$ and assignment $s$ of $\mA$ it holds that
\[
\mA,s\models \varphi \quad\Leftrightarrow\quad \mA,s\models \varphi^*.
\] 
\end{lemma}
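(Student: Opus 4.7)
We proceed by structural induction on $\varphi \in \fopocsv$. The atomic FO-formulae and their negations belong already to both logics, so we set $\varphi^* := \varphi$. For $\psi_1 \wedge \psi_2$, $\psi_1 \vee \psi_2$, $\forall x\,\psi$ and $\exists x\,\psi$ we apply the induction hypothesis to the immediate subformulae; as the corresponding semantic clauses coincide in the two logics, the equivalence carries over unchanged.

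The essential case is $\varphi = D\,(\varphi_{\vec b})_{\vec b \in \{0,1\}^m}$, where
\[
D = \poc[cccc]{\forall x_{11}&\cdots&\forall x_{1n_1}&\bigvee_{b_1\in\{0,1\}}\\ \vdots&&\vdots&\vdots\\ \forall x_{m1}&\cdots&\forall x_{mn_m}&\bigvee_{b_m\in\{0,1\}}}.
\]
By induction each $\varphi_{\vec b}$ has an equivalent $\varphi_{\vec b}^*\in\fopoc$. Pick Boolean variables $\alpha_1,\ldots,\alpha_m$ pairwise distinct and not occurring in any $\varphi_{\vec b}^*$, and let $\pi = (n_1,\ldots,n_m,E)$ with $E = \{(i,j,i,j)\mid 1\le i\le m,\,1\le j\le n_i\}$; this pattern reflects the SV requirement that the $x_{ij}$ be pairwise distinct, so that every tuple $(\vec a_1,\ldots,\vec a_m)$ is of pattern $\pi$. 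Encode $b_i = 1$ by $\alpha_i = \top$, and define $\mathrm{lit}_{\vec b}(\vec\alpha) := \bigvee_{i=1}^m \ell_i^{b_i}$ with $\ell_i^1 := \neg\alpha_i$ and $\ell_i^0 := \alpha_i$; thus $\mathrm{lit}_{\vec b}$ is false under an assignment exactly when $\vec\alpha$ encodes $\vec b$. Set
\[
\psi := \bigwedge_{\vec b \in \{0,1\}^m} \bigl(\mathrm{lit}_{\vec b}(\vec\alpha) \vee \varphi_{\vec b}^*\bigr), \qquad \varphi^* := N_\pi\, \vec x_1\alpha_1\ldots\vec x_m\alpha_m\, \psi.
\]
For the negated case $\neg D\vec\varphi$ we instead take $\varphi^*$ to be the negation of the above formula.

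To verify $\mA,s\models D\vec\varphi \Leftrightarrow \mA,s\models \varphi^*$, we translate witnesses: from SV-witnesses $g_i\colon A^{n_i}\to\{0,1\}$ produce $f_i(\vec a) := \top$ iff $g_i(\vec a) = 1$, and conversely. Fix any tuple $(\vec a_1,\ldots,\vec a_m)$ and let $s'$ be the corresponding extension of $s$ by the $\vec a_i$ and $f_i(\vec a_i)$. The conjunct of $\psi$ indexed by $\vec b^* := (g_1(\vec a_1),\ldots,g_m(\vec a_m))$ has $\mathrm{lit}_{\vec b^*}(\vec\alpha)$ false under $s'$, so we need $\mA,s'\models \varphi_{\vec b^*}^*$; this follows from the SV satisfaction of $\varphi_{\vec b^*}$ together with the induction hypothesis and a coincidence lemma that discards the values $s'(\alpha_i)$, since the $\alpha_i$ do not occur free in $\varphi_{\vec b^*}^*$. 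Every other conjunct is automatically satisfied because its $\mathrm{lit}_{\vec b}$ is true under $s'$. The reverse direction is the mirror image of this argument, and the negated connective case reduces at once to classical negation of the equivalence just established.

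The main obstacle is purely bookkeeping: choosing the Boolean variables $\alpha_i$ fresh with respect to every already-translated subformula, and invoking the coincidence lemma for $\fopoc$ to argue that the values $f_i(\vec a_i)$ assigned to these $\alpha_i$ are irrelevant for the truth of $\varphi_{\vec b}^*$. Once these points are in place, the equivalence between the two versions of the connective is a direct unfolding of the semantic clauses via the bit-vs.-Boolean encoding.
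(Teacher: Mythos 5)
Your proof is correct and follows essentially the same route as the paper's: the same inductive translation, with the partially-ordered connective $D$ replaced by $N_\pi$ over fresh distinct Boolean variables and a conjunction over all bit vectors $\vec b$ guarding $\varphi_{\vec b}^*$ by a test on $\vec\alpha$, and the same witness-translation between $g_i\colon A^{n_i}\to\{0,1\}$ and $f_i\colon A^{n_i}\to\{\bot,\top\}$. The only (cosmetic, and in fact slightly more careful) difference is that you spell the guard out as a disjunction of literals in negation normal form, whereas the paper writes it with $\to$, which is not officially in the grammar of \fopoc.
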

\begin{proof}
The claim follows by the following translation $\varphi\mapsto\varphi^*$. For atomic and negated atomic formulas the translation is the identity. For propositional connectives and first-order quantifiers the translation is defined in the obvious inductive way, i.e.,
\begin{eqnarray*}
\neg\varphi  &\mapsto& \neg\varphi^*,\\
(\varphi \wedge \psi)   &\mapsto& (\varphi^* \wedge \psi^*),\\
(\varphi \vee \psi)   &\mapsto& (\varphi^* \vee \psi^*),\\
\exists x\, \varphi   &\mapsto& \exists x\,  \varphi^*,\\
\forall  x\, \varphi   &\mapsto& \forall x\,  \varphi^*.
\end{eqnarray*}
The only nontrivial case is the case for the partially-ordered connectives $D\in\mathsf{D}$.
Let $D\in\mathsf{D}$ be the partially-ordered connective
\[
	\poc[cccc]{\forall \vec{x}_1&\bigvee_{b_1\in\{0,1\}}\\
	\vdots&\vdots\\
	\forall \vec{x}_m&\bigvee_{b_m\in\{0,1\}}}
\]
and let $\gamma=(\varphi_{\vec{b}})_{\vec{b}\in\{0,1\}^m}$ be a tuple of $\fo(\mathsf{D})$-formulae. Let $\pi$ be the pattern that arises from the tuples $\vec{x}_1,\dots,\vec{x}_m$ of variables. We define that
\[
(D\,\gamma)^* \dfn N_\pi \,\vec{x}_1\alpha_1\ldots\vec{x}_m\alpha_m\,\bigwedge_{K\subseteq\{1,\ldots,m\}}\Bigl(\big(\bigwedge_{i\in K}\alpha_i\land
	\bigwedge_{\stackrel{1\leq i\leq m}{i\not\in K}}\neg\alpha_i\big)\to (\varphi_{\vec{b}_K})^*\Bigr),
\]
where $\vec{b}_K=(b_1,\ldots, b_m)\in\{0,1\}^m$ is the
tuple such that $b_i=1\;\Leftrightarrow\; i\in K$.

The claim now follows by a simple induction on the structure of the formulae. The only nontrivial case is the case for partially-ordered connectives. Let $D\in\mathsf{D}$ be the partially-ordered connective
\[
	\poc[cccc]{\forall \vec{x}_1&\bigvee_{b_1\in\{0,1\}}\\
	\vdots&\vdots\\
	\forall \vec{x}_m&\bigvee_{b_m\in\{0,1\}}}
\]
and let $\gamma=(\varphi_{\vec{b}})_{\vec{b}\in\{0,1\}^m}$ be a tuple of $\fopocsv$-formulae. Let $\pi=(n_1,\dots,n_m,E)$ be the pattern that arises from the tuples $\vec{x}_1,\dots,\vec{x}_m$ of variables. Note that since the variables in $\vec{x}_1,\dots,\vec{x}_m$ are all distinct the pattern $\pi$ does not give rise to any nontrivial identities between variables, i.e., other identities than those of the type $x_{ij}=x_{ij}$.
Assume that the claim holds for each formula in the tuple $\gamma$. By the semantics of the partially-ordered connective $D$,
\begin{equation*}
\mA,s\models D\, \gamma
\end{equation*}
if and only if there exists functions
\[
g_i:A^{n_i}\to\{0,1\},
\]
$1\leq i\leq m$, such that for all $\vec{a}_i\in A^{n_i}$, $1\leq i \leq m$, 
\[
\mA,s(\vec{a}_1/\vec{x}_1,\ldots, \vec{a}_m/\vec{x}_m)\models\varphi_{\vec{b}},
\]
where $\vec{b}=\big(g_1(\vec{a}_1),\dots, g_m(\vec{a}_m)\big)$. By the induction hypothesis, this holds if and only if there exists functions
\[
g_i:A^{n_i}\to\{0,1\},
\]
$1\leq i\leq m$, such that for all $\vec{a}_i\in A^{n_i}$, $1\leq i \leq m$, 
\[
\mA,s(\vec{a}_1/\vec{x}_1,\ldots, \vec{a}_m/\vec{x}_m)\models \varphi_{\vec{b}}^*,
\]
where $\vec{b}=\big(g_1(\vec{a}_1),\dots, g_m(\vec{a}_m)\big)$. At this point it is useful to note that functions from $A^{n_i}$ to $\{0,1\}$ and functions from $A^{n_i}$ to $\{\bot,\top\}$ are essentially the same functions. Also note that a tuple $\vec{a}_1\ldots\vec{a}_m$ is always of pattern $\pi$. Hence the above holds if and only if
there exists functions
\[
f_i:A^{n_i}\to\{\bot,\top\},
\]
$1\leq i\leq m$, such that for all $\vec{a}_i\in A^{n_i}$, $1\leq i \leq m$, if $\vec{a}_1\ldots\vec{a}_m$ is of pattern $\pi$, then
\[
\mA,s'\models \bigwedge_{K\subseteq\{1,\ldots,m\}}\Bigl(\big(\bigwedge_{i\in K}\alpha_i\land
	\bigwedge_{\stackrel{1\leq i\leq m}{i\not\in K}}\neg\alpha_i\big)\to (\varphi_{\vec{b}_K})^*\Bigr),
\]
where
\[
s'=s(\vec{a}_1/\vec{x}_1,\ldots, \vec{a}_m/\vec{x}_m,f_1(\vec{a}_1)/\alpha_1,\dots,f_m(\vec{a}_m)/\alpha_m)
\]
and $\vec{b}_K=(b_1,\ldots, b_m)\in\{0,1\}^m$ is the
tuple such that $b_i=1\;\Leftrightarrow\; i\in K$. Furthermore, by the semantics of the partially-ordered connective $N_\pi$, the above holds if and only if
\[
\mA,s\models N_\pi \,\vec{x}_1\alpha_1\ldots\vec{x}_m\alpha_m\,\bigwedge_{K\subseteq\{1,\ldots,m\}}\Bigl(\big(\bigwedge_{i\in K}\alpha_i\land
	\bigwedge_{\stackrel{1\leq i\leq m}{i\not\in K}}\neg\alpha_i\big)\to (\varphi_{\vec{b}_K})^*\Bigr).
\]

\end{proof}
\begin{lemma}\label{bpoctopoc}
For every formula $\varphi\in\fopoc$ without free Boolean variables there exists a formula $\varphi^+\in\fo(\mathsf{D})$ such that for every model $\mA$ and assignment $s$ of $\mA$ it holds that
\[
\mA,s\models \varphi \quad\Leftrightarrow\quad \mA,s\models \varphi^+.
\] 
\end{lemma}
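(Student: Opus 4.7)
The plan is to mirror Lemma~\ref{poctobpoc}, inductively translating each $\fopoc$-formula into an $\fopocsv$-formula. Since subformulae of $\varphi$ may carry Boolean variables bound by an enclosing partially-ordered connective, I strengthen the claim as follows: for every $\fopoc$-formula $\psi$ and every truth assignment $\sigma : \fr_B(\psi) \to \{\bot,\top\}$ of its free Boolean variables, I define $\psi^\sigma \in \fopocsv$ so that $\mA,s \models \psi$ iff $\mA,s \models \psi^\sigma$ whenever $s$ agrees with $\sigma$ on $\fr_B(\psi)$. Setting $\varphi^+ := \varphi^\emptyset$ then yields the lemma, as the hypothesis forces $\fr_B(\varphi)=\emptyset$.

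On the atoms, $\alpha^\sigma$ is a fixed $\fo$-tautology $\top_\fo := \exists y\,y=y$ when $\sigma(\alpha)=\top$ and $\neg\top_\fo$ otherwise, with the analogous clause for $\neg\alpha$; first-order atoms are their own translations. The cases $\vee,\wedge,\exists x,\forall x,\neg$ are componentwise. The essential case is
\[
    N_\pi\, \vec{x}_1\alpha_1 \cdots \vec{x}_m\alpha_m\, \psi
\]
with pattern $\pi = (n_1,\ldots,n_m,E)$. I pick fresh distinct first-order variables $\vec{y}_i$ of length $n_i$ and set
\[
    \bigl(N_\pi\, \vec{x}_1\alpha_1 \cdots \vec{x}_m\alpha_m\, \psi\bigr)^\sigma
    \;=\; D\,(\gamma_{\vec{b}})_{\vec{b}\in\{0,1\}^m},
\]
where $D\in\mathsf{D}$ is the Sandu--V\"a\"an\"anen connective with universal rows $\forall \vec{y}_i$ and disjunctions $\bigvee_{b_i\in\{0,1\}}$, and
\[
    \gamma_{\vec{b}} \;:=\; \pi_E(\vec{y}_1,\ldots,\vec{y}_m) \to \bigl(\psi[\vec{y}_1/\vec{x}_1,\ldots,\vec{y}_m/\vec{x}_m]\bigr)^{\sigma \cup\{b_i/\alpha_i\,:\,i\le m\}},
\]
with $\pi_E$ the first-order conjunction of all equalities $y_{ij}=y_{kl}$ for $(i,j,k,l)\in E$. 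The negated clause $\neg N_\pi\cdots\psi$ is translated by prepending $\neg$ and using the $\neg D\vec\varphi$ rule of $\fopocsv$.

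The verification proceeds by straightforward induction on $\psi$; only the connective case requires real thought. The Sandu--V\"a\"an\"anen semantics of $D\,\gamma$ universally quantifies over every tuple $\vec{a}_i\in A^{n_i}$, but since $\gamma_{\vec{b}}$ is guarded by $\pi_E$, this effectively restricts to those tuples of pattern $\pi$; on such tuples the renaming $\vec{y}/\vec{x}$ is semantically faithful, because the identifications enforced by $\pi_E$ on the $\vec{y}$-side are precisely the identities inherent in the $\vec{x}$-side. Identifying $\{0,1\}$-valued with $\{\bot,\top\}$-valued functions matches existence of the Sandu--V\"a\"an\"anen witnesses $g_i$ with existence of the $\fopoc$ witnesses $f_i$, and the induction hypothesis applied to $\psi[\vec{y}/\vec{x}]$ under the extended context $\sigma\cup\{b_i/\alpha_i\}_i$ closes the equivalence. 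The main obstacle is the triple bookkeeping---variable renaming, pattern-induced equalities, and extension of the Boolean context---which must be arranged consistently; Lemma~\ref{change_variables} handles the renaming side cleanly, after which the unfolding of the two semantics is routine.
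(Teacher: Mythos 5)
Your proposal is correct and follows essentially the same route as the paper's proof: fresh distinct tuples $\vec y_i$, the guard $\bigwedge_{(i,j,k,l)\in E} y_{ij}=y_{kl}$ to recover the pattern restriction inside the Sandu--V\"a\"an\"anen connective, and resolution of the bound Boolean variables $\alpha_i$ into first-order truth-value formulae indexed by $\vec b$. The only difference is bookkeeping: you carry the Boolean values as a context $\sigma$ and resolve $\alpha_i$ at the leaves (permitting a structural induction), whereas the paper substitutes $\vartheta_{b_i}$ for $\alpha_i$ eagerly and inducts on the nesting depth of connectives; these are interchangeable.
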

\begin{proof}
The claim follows by the following translation $\varphi\mapsto\varphi^+$. Note that the way we handle partially-ordered connectives ensures that we do not need a clause for Boolean variables nor for negated Boolean variables. For atomic and negated atomic formulae the translation is the identity. For propositional connectives and first-order quantifiers the translation is defined in the obvious inductive way, i.e.,
\begin{eqnarray*}
\neg\varphi  &\mapsto& \neg\varphi^+,\\
(\varphi \wedge \psi)   &\mapsto& (\varphi^+ \wedge \psi^+),\\
(\varphi \vee \psi)   &\mapsto& (\varphi^+ \vee \psi^+),\\
\exists x\, \varphi   &\mapsto& \exists x  \,\varphi^+, \\
\forall  x \,\varphi   &\mapsto& \forall x  \,\varphi^+.
\end{eqnarray*}
The only nontrivial case is the case for the partially-ordered connectives $N_\pi$.
Let $\pi=(n_1,\ldots,n_m,E)$ be a pattern and let
\[
N_\pi \,\vec{x}_1\alpha_1\ldots\vec{x}_m\alpha_m \psi
\]
be an $\fopoc$-formula without free Boolean variables.
Furthermore, let 
$\vec{y}_1,\dots,\vec{y}_m$ be tuples of distinct fresh variables such that $\vec{y}_i=\{y_{i1},\dots,y_{in_i}\}$, for every $i\leq m$.
We define that
\[
(N_\pi \,\vec{x}_1\alpha_1\ldots\vec{x}_m\alpha_m\, \psi)^+ \dfn \poc[cccc]{\forall \vec{y}_1&\bigvee_{b_1\in\{0,1\}}\\
	\vdots&\vdots\\
	\forall \vec{y}_m&\bigvee_{b_m\in\{0,1\}}}\,(\psi^+_{\vec{b}})_{\vec{b}\in\{0,1\}^m},
\]
where, for each $\vec{b}=(b_1,\dots,b_m)\in\{0,1\}^m$, $\psi_{\vec{b}}$ is the
formula 
\[
	\bigl(\bigwedge_{(i,j,k,l)\in E}y_{ij}=y_{kl}\bigr)\to 
	\psi'( \vartheta_{b_1}/\alpha_1,\ldots, \vartheta_{b_m}/\alpha_m),
\]
where $\psi'$ is the formula obtained from $\psi$ by replacing each free occurrence of the variable
$x_{ij}$ by some variable $y_{kl}$ such that $(i,j,k,l)\in E$, $\vartheta_0$ is the formula $\neg (y_{11}=y_{11})$ and $ \vartheta_1$ is the formula $y_{11}=y_{11}$.

The claim now follows by a simple induction on the nesting depth of partially-ordered connectives in formulae, i.e., the highest number of nested partially-ordered connectives in formulae. The case without partially-ordered connectives is trivial. The cases for first-order operations are easy. The only nontrivial case is the case for partially-ordered connectives. Let
\[
N_\pi \,\vec{x}_1\alpha_1\ldots\vec{x}_m\alpha_m
\]
be a partially-ordered connective with pattern $\pi=(n_1,\ldots,n_m,E)$ and let $\psi$ be a \fopoc-formula such that
\[
\varphi\dfn N_\pi \,\vec{x}_1\alpha_1\ldots\vec{x}_m\alpha_m\, \psi
\]
does not have free Boolean variables. Furthermore, assume that the claim holds for every $\fopoc$-formula that does not have free boolean variables and has a lower nesting depth of partially-ordered connectives than $\varphi$. Let 
$\vec{y}_1,\dots,\vec{y}_m$ be tuples of distinct fresh variables not occurring in $\varphi$ such that $\vec{y}_i=\{y_{i1},\dots,y_{in_i}\}$, for every $i\leq m$. Now, by the semantics of the partially-ordered connective $N_\pi$,
\[
\mA,s \models N_\pi \,\vec{x}_1\alpha_1\ldots\vec{x}_m\alpha_m\, \psi
\]
if and only if there exist functions
\[
f_i:A^{n_i}\to\{\bot,\top\},
\]
$1\le i\le m$,
such that for all tuples $\vec{a}_i\in A^{n_i}$, $1\le i\le m$,
if $\vec{a}_1\ldots\vec{a}_m$ is of pattern $\pi$ then
\[
\mA,s'\models  \psi,
\]
where $s'=s(\vec{a}_1/\vec{x}_1,\ldots, \vec{a}_m/\vec{x}_m, f_1(\vec{a}_1)/\alpha_1,\ldots,f_m(\vec{a}_m)/\alpha_m)$.
Clearly this holds if and only if there exist functions
\[
f_i:A^{n_i}\to\{\bot,\top\},
\]
$1\le i\le m$,
such that for all tuples $\vec{a}_i\in A^{n_i}$, $1\le i\le m$,
\[
\mA,t'\models \bigl(\bigwedge_{(i,j,k,l)\in E}y_{ij}=y_{kl}\bigr)\to \psi',
\]
where $t'=s(\vec{a}_1/\vec{y}_1,\ldots, \vec{a}_m/\vec{y}_m, f_1(\vec{a}_1)/\alpha_1,\ldots,f_m(\vec{a}_m)/\alpha_m)$ and $\psi'$ is obtained from $\psi$ by replacing each variable $x_{ij}$ occurring free in $\psi$ by some variable $y_{kl}$ such that $(i,j,k,l)\in E$.
At this point it is helpful to note that functions from $A^{n_i}$ to $\{\bot,\top\}$ and functions from $A^{n_i}$ to $\{0,1\}$ are essentially the same functions. Hence the above holds if and only if there exist functions
\[
g_i:A^{n_i}\to\{0,1\},
\]
$1\le i\le m$,
such that for all tuples $\vec{a}_i\in A^{n_i}$, $1\le i\le m$,
\[
\mA,t\models \bigl(\bigwedge_{(i,j,k,l)\in E}y_{ij}=y_{kl}\bigr)\to 
	\psi'( \vartheta_{b_1}/\alpha_1,\ldots, \vartheta_{b_m}/\alpha_m),
\]
where $t=s(\vec{a}_1/\vec{y}_1,\ldots, \vec{a}_m/\vec{y}_m)$, $b_i=g_i(\vec{a}_i)$ for each $i\leq m$, $\vartheta_0$ denotes the formula $\neg (y_{11}=y_{11})$ and $\vartheta_1$ denotes the formula $y_{11}=y_{11}$.
Remember that for $\vec{b}=(b_1,\dots,b_m)\in \{0,1\}^m$ we defined that
\[
\psi_{\vec{b}}= \bigl(\bigwedge_{(i,j,k,l)\in E}y_{ij}=y_{kl}\bigr)\to 
	\psi'( \vartheta_{b_1}/\alpha_1,\ldots, \vartheta_{b_m}/\alpha_m).
\]
Hence, by the inductive hypothesis, the above holds if and only if there exist functions
\[
g_i:A^{n_i}\to\{0,1\},
\]
$1\le i\le m$,
such that for all tuples $\vec{a}_i\in A^{n_i}$, $1\le i\le m$,
\[
\mA,t\models \psi^+_{\vec{b}}
\]
where $t=s(\vec{a}_1/\vec{y}_1,\ldots, \vec{a}_m/\vec{y}_m)$ and $\vec{b}=\big(g_1(\vec{a}_1),\dots,g_m(\vec{a}_m) \big)$.
Finally, by the semantics of the partially-ordered connectives $\mathsf{D}$, this holds if and only if
\[
\mA,s\models \poc[cccc]{\forall \vec{y}_1&\bigvee_{b_1\in\{0,1\}}\\
	\vdots&\vdots\\
	\forall \vec{y}_m&\bigvee_{b_m\in\{0,1\}}}\,(\psi^+_{\vec{b}})_{\vec{b}\in\{0,1\}^m}.
\]
\end{proof}
\begin{proposition}\label{fopocsv}
$\fopoc\equiv\fopocsv$.
\end{proposition}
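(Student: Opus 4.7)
The plan is to derive this equivalence as an essentially immediate consequence of the two preceding translation lemmas, Lemma~\ref{poctobpoc} and Lemma~\ref{bpoctopoc}. Since expressive equivalence of logics is being measured at the level of sentences (equivalently, formulas with no free Boolean variables, noting that $\fopocsv$ has no Boolean variables at all), both inclusions reduce to pointwise translations that preserve the satisfaction relation on all models and assignments.

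First I would show $\fopocsv \leq \fopoc$. Let $\varphi$ be a sentence of $\fopocsv = \fo(\mathsf{D})$. By Lemma~\ref{poctobpoc}, the formula $\varphi^* \in \fopoc$ satisfies $\mA, s \models \varphi$ iff $\mA, s \models \varphi^*$ for every model $\mA$ and assignment $s$ of $\mA$. In particular, taking $\varphi$ to be a sentence, $\mA \models \varphi$ iff $\mA \models \varphi^*$, so $\varphi^*$ defines the same class of models as $\varphi$. (One should note that the translation in Lemma~\ref{poctobpoc} introduces only the bound Boolean variables $\alpha_1,\ldots,\alpha_m$ used inside each partially-ordered connective, so $\varphi^*$ has no free Boolean variables and is indeed a sentence of $\fopoc$.)

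For the converse direction $\fopoc \leq \fopocsv$, let $\varphi$ be a sentence of $\fopoc$. Since a sentence has no free variables at all, in particular it has no free Boolean variables, so Lemma~\ref{bpoctopoc} applies and yields a formula $\varphi^+ \in \fo(\mathsf{D}) = \fopocsv$ such that $\mA, s \models \varphi$ iff $\mA, s \models \varphi^+$ for every model $\mA$ and assignment $s$. Again specializing to sentences gives $\mA \models \varphi$ iff $\mA \models \varphi^+$, so the two sentences define the same model class.

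Combining the two inclusions yields $\fopoc \equiv \fopocsv$. I do not foresee any genuine obstacle: the real technical work has already been carried out in Lemmas~\ref{poctobpoc} and~\ref{bpoctopoc}, and what remains is only the observation that the translations preserve the property of being a sentence (in particular, that the $\fopoc$-to-$\fopocsv$ translation is only claimed in the first place for formulas without free Boolean variables, which is automatic for sentences). The only mild subtlety worth flagging explicitly is that one must make clear whether $\equiv$ is intended at the sentence level or on arbitrary formulas; since $\fopocsv$ has no Boolean variables, the natural reading is the sentence level (or formulas without free Boolean variables), and under that reading the argument above is complete.
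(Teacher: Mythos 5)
Your proposal is correct and follows exactly the paper's argument, which derives the equivalence directly from Lemmas~\ref{poctobpoc} and~\ref{bpoctopoc}. The extra observations you make (that sentences have no free Boolean variables, so Lemma~\ref{bpoctopoc} applies, and that the translations preserve sentencehood) are sensible elaborations of what the paper leaves implicit.
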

\begin{proof}
Follows directly from Lemmas \ref{poctobpoc} and \ref{bpoctopoc}.
\end{proof}
The translations defined in Lemmas \ref{poctobpoc} and \ref{bpoctopoc} directly yield the following equivalences between fragments of $\fopoc$ and $\fopocsv$ as well.
\begin{proposition}\label{fopocsvf}
The following equivalences hold: $\fopocpsv\equiv\fopocp$, $\pocfo\equiv\pocsvfo$ and $\pocqf\equiv\pocsvqf$.
\end{proposition}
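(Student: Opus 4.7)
The plan is to verify that the two translations $\varphi\mapsto\varphi^*$ and $\varphi\mapsto\varphi^+$ constructed in Lemmas~\ref{poctobpoc} and~\ref{bpoctopoc} restrict appropriately to each of the three fragment pairs. Since model-theoretic equivalence has already been established by those lemmas, only a syntactic bookkeeping is left.

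First I would handle $\fopocpsv\equiv\fopocp$. Both translations are defined by induction on formula structure, commute with all propositional connectives and first-order quantifiers, and in particular send $\neg\varphi$ to $\neg\varphi^{*}$ and $\neg\varphi^{+}$, respectively. Moreover, inspecting the clauses for partially-ordered connectives in both translations, the image of a POC is always a formula whose outermost POC occurs positively; no clause ever inserts a fresh $\neg D$ or $\neg N_\pi$. A routine induction on formula structure then shows that a formula without any negated POC is mapped to a formula without any negated POC, which yields the equivalence of the positive fragments.

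Next I would handle $\pocfo\equiv\pocsvfo$ and $\pocqf\equiv\pocsvqf$ by unwinding what each translation does to a single top-level POC. For $D(\varphi_{\vec{b}})_{\vec{b}}\in\pocsvfo$, the image under $\varphi\mapsto\varphi^*$ is an $N_\pi$-formula whose inner matrix is a Boolean combination of the $(\varphi_{\vec{b}_K})^*$ together with the bound Boolean atoms $\alpha_i$; since each $\varphi_{\vec{b}_K}$ is POC-free, the translation fixes it, so the inner matrix remains first-order (respectively quantifier free) modulo the bound atoms $\alpha_i$. Conversely, for $N_\pi\,\vec{x}_1\alpha_1\ldots\vec{x}_m\alpha_m\,\psi\in\pocfo$ the image under $\varphi\mapsto\varphi^+$ is a $D$-formula whose components arise from $\psi$ by a first-order variable renaming and by replacing every Boolean atom $\alpha_i$ with the atomic first-order formula $\vartheta_{b_i}$; this substitution introduces no new quantifiers and leaves $\psi$ POC-free, so the components lie in $\fo$ and, in the $\pocqf$ case, remain quantifier free.

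The main thing to be careful about is reconciling the inner-formula syntax of $\pocfo$ and $\pocqf$ with the presence of Boolean atoms bound by the surrounding $N_\pi$. I would read ``$\varphi\in\fo$'' in the inner slot of $\pocfo$ (and correspondingly ``$\varphi$ quantifier free'' for $\pocqf$) as allowing occurrences of Boolean atoms bound by the leading partially-ordered connective, in exact analogy with how bound first-order variables may appear inside the scope of an ordinary quantifier. With this convention fixed, no further work beyond the observations above is required.
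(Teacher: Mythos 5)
Your proposal is correct and takes essentially the same route as the paper: the paper simply remarks that the translations of Lemmas~\ref{poctobpoc} and~\ref{bpoctopoc} ``directly yield'' these fragment equivalences, and your argument just spells out the syntactic bookkeeping (preservation of positivity, and of the first-order/quantifier-free status of the matrix) that this remark leaves implicit. Your reading of the inner slot of $\pocfo$ and $\pocqf$ as permitting Boolean atoms bound by the leading connective matches the paper's own usage, e.g.\ in the proofs of Theorems~\ref{bbd-fopocp} and~\ref{rbd-pocfo}.
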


By Theorem \ref{fopoc-01}, \fopocsv has the zero-one law. Thus,
by Proposition~\ref{fopocsv}, the same is true for \fopoc and all its fragments as well.
\begin{corollary}\label{0-1-law}
\fopoc has the zero-one law.
Therefore,  \fopocp, \pocfo and \pocqf also have the zero-one law.
\end{corollary}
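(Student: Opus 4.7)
The proof is essentially immediate from the two ingredients already assembled in the paper, so my plan is to string them together carefully rather than develop new machinery.

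First I would unwind the definition of the zero-one law. A logic $\mathcal{L}$ has the zero-one law precisely when, for every relational vocabulary $\tau$ and every $\tau$-sentence $\varphi \in \mathcal{L}$, the limit $\mu(\varphi) = \lim_{n\to\infty} |\{\mA \in \Str_n(\tau) \mid \mA \models \varphi\}| / |\Str_n(\tau)|$ exists and equals $0$ or $1$. The key observation is that this property depends only on the class of $\tau$-models that $\varphi$ defines; hence it is preserved under passage to any equi-expressive logic.

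Next I would handle $\fopoc$ itself. Given any $\tau$-sentence $\varphi \in \fopoc$, Proposition \ref{fopocsv} yields a $\tau$-sentence $\varphi^+ \in \fopocsv$ with $\mA \models \varphi \Leftrightarrow \mA \models \varphi^+$ for every $\tau$-model $\mA$ (here one should note that closed formulae are in particular formulae without free Boolean variables, so Lemma \ref{bpoctopoc} applies, and the translation preserves the vocabulary since it introduces no new relation symbols). Consequently $\{\mA \in \Str_n(\tau) \mid \mA \models \varphi\} = \{\mA \in \Str_n(\tau) \mid \mA \models \varphi^+\}$ for every $n$, so $\mu(\varphi) = \mu(\varphi^+)$. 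By Theorem \ref{fopoc-01}, the latter exists and equals $0$ or $1$, which gives the claim for $\fopoc$.

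Finally, the addendum about $\fopocp$, $\pocfo$ and $\pocqf$ is syntactically trivial: each of these is, by definition, a syntactic fragment of $\fopoc$, so every sentence in any of them is already a sentence of $\fopoc$, and the zero-one law just proved applies verbatim. There is no real obstacle here; the only point requiring minor care is verifying that the translation of Lemma \ref{bpoctopoc} stays within the same relational vocabulary (it does, since only fresh first-order variables and a trivial equality formula $y_{11}=y_{11}$ are introduced), so that the probability spaces $\Str_n(\tau)$ on the two sides match up.
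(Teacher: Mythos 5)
Your proposal is correct and follows exactly the paper's route: invoke Theorem \ref{fopoc-01} for \fopocsv, transfer the zero-one law across the equivalence of Proposition \ref{fopocsv}, and observe that the fragments inherit it syntactically. The extra care you take (noting that sentences have no free Boolean variables so Lemma \ref{bpoctopoc} applies, and that the translation preserves the vocabulary) is a welcome but inessential elaboration of the same argument.
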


We will also make use of the fact that, by Theorem \ref{pocqf-snp}, $\pocqf\equiv\pocsvqf$ is equivalent to strict NP:

\begin{corollary}\label{abd-pocqf}
$\pocqf\equiv\mathrm{SNP}$.
\end{corollary}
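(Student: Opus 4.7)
The proof plan is essentially a one-line chaining of two previously established equivalences. By Proposition \ref{fopocsvf}, we have $\pocqf \equiv \pocsvqf$, established via the translations of Lemmas \ref{poctobpoc} and \ref{bpoctopoc} restricted to the quantifier-free fragment. By Theorem \ref{pocqf-snp} (cited from \cite{hesetu08}), we have $\pocsvqf \equiv \mathrm{SNP}$. Composing these two equivalences yields $\pocqf \equiv \mathrm{SNP}$, which is the statement of the corollary.

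The main thing I would check carefully is that the translations $\varphi \mapsto \varphi^*$ and $\varphi \mapsto \varphi^+$ from Lemmas \ref{poctobpoc} and \ref{bpoctopoc} indeed preserve the $\pocqf$ fragment, i.e., that they send a $\pocqf$-formula to a $\pocsvqf$-formula and vice versa. Inspecting the definitions: the translation $(D\,\gamma)^*$ from Lemma \ref{poctobpoc} produces a single partially-ordered connective applied to a conjunction of implications whose consequents are the translated components $(\varphi_{\vec{b}_K})^*$; if each $\varphi_{\vec{b}_K}$ is quantifier free, so is its translation, and the resulting formula is in $\pocqf$. In the converse direction, the translation of Lemma \ref{bpoctopoc} sends a $\pocqf$-formula $N_\pi\,\vec{x}_1\alpha_1\ldots\vec{x}_m\alpha_m\,\psi$ with $\psi$ quantifier free to a single partially-ordered connective bounding a tuple of formulae of the form $\big(\bigwedge y_{ij}=y_{kl}\big)\to\psi'(\vartheta_{b_1}/\alpha_1,\ldots,\vartheta_{b_m}/\alpha_m)$, which is again quantifier free, placing the result in $\pocsvqf$.

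Thus the restriction to $\pocqf$ and $\pocsvqf$ is preserved, Proposition \ref{fopocsvf} applies, and the corollary is immediate from Theorem \ref{pocqf-snp}. There is no substantive obstacle; the content of the corollary lies entirely in the prior two results.
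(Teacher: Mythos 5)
Your proposal is correct and matches the paper's route exactly: the corollary is stated as an immediate consequence of Theorem \ref{pocqf-snp} ($\pocsvqf\equiv\mathrm{SNP}$) combined with Proposition \ref{fopocsvf} ($\pocqf\equiv\pocsvqf$). Your additional check that the translations of Lemmas \ref{poctobpoc} and \ref{bpoctopoc} preserve the quantifier-free fragments is exactly the verification the paper leaves implicit when asserting Proposition \ref{fopocsvf}, so there is nothing missing.
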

%

%

\section{Fragments of Boolean dependence logic}\label{fofbd}
%
%
%
In this section we define fragments of Boolean dependence logic that correspond to the fragments $\fopocp$, $\pocfo$ and $\pocqf$ of $\fopoc$, with respect to expressive power. We restrict our attention to sentences, i.e., to formulae without free variables.

We will first demonstrate an exemplary translation from $\fopocp$ to Boolean dependence logic in order to visualize the fragments of $\bd$ that correspond to $\fopocp$, $\pocfo$ and $\pocqf$, respectively.
Let $\varphi$ be the $\fopocp$-sentence
\[
\forall x_0 \forall x_1\exists x_2 \poc[cc]{\forall\vec{y} &\exists \alpha\\
\forall \vec{z} &\exists \beta}\psi,
\]
where $\psi$ is syntactically first-order. Clearly $\varphi$ is equivalent to the sentence
\[
\vartheta\dfn\forall x_0 \forall x_1\exists x_2 \forall \vec{y}\, \forall \vec{z}\,\exists \alpha\exists\beta\big(\dep[x_0,x_1,x_2,\vec{y},\alpha]\wedge \dep[x_0,x_1,x_2,\vec{z},\beta]\wedge \psi\big)
\]
of Boolean dependence logic. After examining the syntactic form of $\vartheta$ we notice some regularity in the Boolean dependence atoms of $\vartheta$. The existential first-order quantifier $\exists x_2$ seems to partition the set of first-order variables quantified in $\vartheta$ into two parts. Every variable before the quantifier $\exists x_2$ including the variable $x_2$ itself are in the antecedent of every Boolean dependence atom in $\vartheta$. The variables after $\exists x_2$ are not in the antecedent of every Boolean dependence atom of $\vartheta$. This observation leads us to the following somewhat technical definitions.
\begin{definition}\label{occurrence}
Let $\varphi$ be a formula of $\bd$ or $\fopoc$, $\psi$ a subformula of $\varphi$ and $n\in\mathbb{N}$. Note that we may consider formulae as just strings of symbols of some prescribed vocabulary. Hence, we may order the occurrences of subformulae of a given formula. By $[\psi,\varphi]_n$ we denote the $n$th occurrence of the formula $\psi$ in $\varphi$. If there is just one occurrence of $\psi$ in $\varphi$, we may drop the subscript and just write $[\psi,\varphi]$.
\end{definition}
\begin{definition}\label{dependenceset}
Let $\varphi$ be a formula of $\bd$ or $\fopoc$, and $[\psi,\varphi]_n$ an occurrence of the subformula $\psi$ of $\varphi$. We define $V([\psi,\varphi]_n)$ to be the set of all first-order variables $x$ such that $x$ is free in $\varphi$ or
the occurrence $[\psi, \varphi]_n$ of $\psi$ is in a scope of $\forall x$ or $\exists x$ in $\varphi$.
\end{definition}
Hence, for sentences $V([\psi,\varphi]_n)$ is the set of quantified variables that dominate $[\psi,\varphi]_n$ in the syntactic tree of $\varphi$. We are now ready to define the fragments of $\bd$ that correspond to $\fopocp$, $\pocfo$ and $\pocqf$, respectively.
\begin{definition}\label{booleanlogics}
We define the following syntactic fragments of Boolean dependence logic.
\begin{enumerate}
\item \emph{Bounded Boolean dependence logic}, \bbd, is the syntactic restriction of \bd to formulae $\varphi$ such that the following condition holds.

If $[\exists x \psi,\varphi]_n$ is an occurrence of a subformula $\exists x \psi$ of $\varphi$ and a dependence atom $\dep[x_1,\dots,x_n,\alpha]$ is a subformula of $\psi$ then
\[
V([\psi,\varphi]_t)\subseteq \{x_1,\dots,x_n\},
\]
where $[\psi,\varphi]_t$ is the occurrence of $\psi$ that occurs in $[\exists x \psi,\varphi]_n$.
\item \emph{Restricted Boolean dependence logic}, \rbd, is the restriction of \bd to formulae where no dependence atoms occur inside the scope of an existential first-order quantifier.
\item \emph{Universal Boolean dependence logic}, \abd, is the restriction of \bd to formulae without existential quantification of first-order variables.
\end{enumerate}
\end{definition}
In Section \ref{equiv} we establish that the above definitions indeed fit for our purposes. We show that
\[
\abd\equiv \pocqf,\, \rbd\equiv \pocfo\, \text{ and }\, \bbd\equiv \fopocp.
\] 
It is easy to see, that every $\abd$ formula is an $\rbd$ formula, every $\rbd$ formula is a $\bbd$ formula and every $\bbd$ formula is a $\bd$ formula. Hence, once we realize that every $\bd$-sentence can be simulated by a sentence of dependence logic, we obtain the following hierarchy.
\begin{proposition}\label{logic-inclusions}
$\abd\leq\rbd\leq\bbd\leq\bd\leq\df$.
\end{proposition}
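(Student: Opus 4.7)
The first three inequalities are dispatched by unpacking the definitions. An $\abd$-formula contains no existential first-order quantifier, so the $\rbd$-restriction ``no dependence atom in the scope of such a quantifier'' is vacuous; hence every $\abd$-formula is syntactically an $\rbd$-formula. By the same token, an $\rbd$-formula contains no dependence atom within the scope of any $\exists x$, so the $\bbd$-condition of Definition \ref{booleanlogics} on occurrences $[\exists x\,\psi,\varphi]_n$ is vacuously true, and every $\rbd$-formula is a $\bbd$-formula. Finally, $\bbd$ is by construction a syntactic fragment of $\bd$. Since team semantics is shared across these fragments, each syntactic inclusion gives the corresponding semantic inequality, yielding $\abd\leq\rbd\leq\bbd\leq\bd$.

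The substantive claim is $\bd\leq\df$. My plan is to route through existential second-order logic: by Theorem \ref{d equiv if equiv eso} it suffices to prove $\bd\leq\eso$. Given a $\bd$-sentence $\varphi$, I would Skolemize. Each existentially quantified first-order variable $x_i$ becomes a Skolem function $f_i$ whose arity is the number of universal quantifiers in whose scope the existential occurs; each existentially quantified Boolean variable $\alpha$ becomes a Skolem relation symbol $P_\alpha$ of the same arity. A dependence atom $\dep[x_1,\dots,x_n,\alpha]$ forces $P_\alpha$ to depend only on the values of $x_1,\dots,x_n$, which can be enforced either by giving $P_\alpha$ arity $n$ from the outset and applying it as $P_\alpha(x_1,\dots,x_n)$ wherever $\alpha$ would occur, or by conjoining an explicit independence axiom. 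The resulting second-order prefix followed by the first-order skeleton yields an $\eso$-sentence equivalent to $\varphi$; combining this with $\eso\equiv\df$ gives a $\df$-sentence $\varphi^*$ equivalent to $\varphi$.

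The hard part will be the bookkeeping in the Skolemization, particularly tracking which universal variables are in scope of each existential and Boolean quantifier, handling team-semantic disjunctions (which split the team and can require case-indexed Skolem symbols), and consistently encoding dependence-atom restrictions when several such atoms constrain the same $\alpha$. Fortunately, this is essentially the bookkeeping already needed for the $\df\leq\eso$ half of Theorem \ref{d equiv if equiv eso} (see \cite{va07}), lifted to allow Boolean-valued Skolem symbols. As a more concrete alternative to the $\eso$ detour, one can attempt a direct translation $\bd\to\df$ that replaces each Boolean variable $\alpha$ by a first-order variable $y_\alpha$ constrained to a distinguished pair $\{u,v\}\subseteq A$ fixed at the top of the formula by $\forall u\,\forall v$, with $\alpha\mapsto(y_\alpha=v)$, $\neg\alpha\mapsto(y_\alpha=u)$, $\dep[\vec x,\alpha]\mapsto\dep[\vec x,y_\alpha]$, and $\exists\alpha\,\psi\mapsto\exists y_\alpha\bigl((y_\alpha=u\vee y_\alpha=v)\wedge\widehat{\psi}\bigr)$, the disjunction restricting $y_\alpha$ to $\{u,v\}$ via team-semantic splitting; correctness would follow by a structural induction matching functions $F\colon X\to\{\bot,\top\}$ with functions $G\colon X\to A$ of range $\{u,v\}$. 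This direct route, however, needs a separate argument for one-element models, where no two distinct $u,v$ exist and the outer disjunct $u=v$ would trivially absorb the translation.
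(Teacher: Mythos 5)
Your handling of the first three inclusions is correct and is exactly what the paper does: each is a syntactic containment read off from Definition~\ref{booleanlogics}. For $\bd\leq\df$ the paper does \emph{not} go through \eso; it gives a direct compositional translation $\varphi\mapsto\exists x_\bot\exists x_\top(x_\bot\neq x_\top\wedge\varphi^*)$, where $\varphi^*$ replaces each Boolean variable $\alpha$ by a fresh first-order variable $x_\alpha$, $\dep[\vec{x},\alpha]$ by $\dep[\vec{x},x_\alpha]$, $\alpha$ by $x_\alpha=x_\top$, $\neg\alpha$ by $x_\alpha=x_\bot$, and $\exists\alpha$ by $\exists x_\alpha$ (models of cardinality one being set aside). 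Your primary route --- Skolemize $\bd$ into \eso and then invoke $\eso\leq\df$ from Theorem~\ref{d equiv if equiv eso} --- is a legitimate alternative strategy in principle, but as written it is only a plan: the actual content (case-indexed Skolem symbols for team-splitting disjunctions, the arity bookkeeping, the encoding of dependence atoms as determinacy conditions on the Skolem relations) is explicitly deferred, so it does not yet constitute a proof.

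The genuine error is in your ``direct route'', which is nearly the paper's construction but with the wrong quantifiers on the witness pair. If $u,v$ are introduced by $\forall u\forall v$, then in team semantics the pair $(u,v)$ \emph{varies across the team}: the expanded team contains an assignment for every pair of elements. The encoding of the truth value of $\alpha$ as ``$y_\alpha=v$'' is then evaluated against assignments with different values of $v$, and $\dep[\vec{x},y_\alpha]$ forces the \emph{element} $y_\alpha$ to be determined by $\vec{x}$, whereas what must be determined by $\vec{x}$ is the \emph{Boolean value} of ``$y_\alpha=v$''; once $v$ is not constant on the team these are different constraints. Concretely, $\forall x\exists\alpha(\dep[x,\alpha]\wedge\alpha)$ is valid, but your translation requires a $y_\alpha$ that is a function of $x$ alone and satisfies $y_\alpha=v$ on a team in which, for each fixed value of $x$, the variable $v$ ranges over every element distinct from $u$ --- impossible in any model with at least two elements. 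This is precisely why the paper quantifies the pair existentially at the outermost level: on the initial singleton team $\{\emptyset\}$, the prefix $\exists x_\bot\exists x_\top$ fixes a single pair of distinct elements that remains constant on every team arising in the evaluation, and the encoding then goes through. The one-element-model caveat you flag is real but minor; the team-variation issue is the one that breaks your version.
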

\begin{proof}
The first three inclusions follow by the observation made above.
For the last inclusion we give a translation $\varphi\mapsto \varphi^+$ that maps $\bd$-sentences to equivalent $\dl$ sentences.
We will establish that for every $\bd$-sentence $\varphi$ and every structure $\mA$ it holds that
\begin{equation}\label{bdtod}
\mA \models \varphi \quad\text{ iff }\quad \mA \models \varphi^\mi{+}.
\end{equation}
For each Boolean variable $\alpha$ and for the symbols $\bot$ and $\top$, we introduce distinct fresh first-order variables $x_\alpha$, $x_\bot$ and $x_\top$. Without loss of generality, we may assume that these first-order variables do not appear in the formulae of Boolean dependence logic.
We may, without loss of generality, restrict our attention to models with at least two elements. For a sentence $\varphi\in \bd$ we define that 
\[
\varphi^\mi{+} \dfn \exists x_\bot \exists x_\top (x_\bot \neq x_\top \land \varphi^*),
\]
where $\varphi^*$ is the sentence obtained from $\varphi$ by the following recursive translation. For first-order literals, the translation is the identity. The remaining clauses are as follows:
\begin{eqnarray*}
\dep[x_1,\dots,x_k,\alpha]^* &\dfn& \dep[x_1,\dots,x_k,x_{\alpha}],\\
\alpha^* &\dfn& x_{\alpha} = x_\top, \\
(\neg\alpha)^* &\dfn& x_{\alpha} = x_\bot, \\
(\varphi \wedge \psi)^*   &\dfn& (\varphi^* \wedge \psi^*),\\
(\varphi \vee \psi)^*   &\dfn& (\varphi^* \vee \psi^*),\\
(\exists \alpha \varphi)^* &\dfn& \exists x_{\alpha}\varphi^*,\\
(\exists x \varphi)^*   &\dfn& \exists x  \varphi^*, \\
(\forall  x \varphi)^*   &\dfn& \forall x  \varphi^*.
\end{eqnarray*}
Clearly, if $\varphi$ is $\bd$-sentence then $\varphi^\mi{+}$ is a $\dl$-sentence. Furthermore, it is easy to see that (\ref{bdtod}) holds for every $\bd$-sentence $\varphi$ and every model $\mA$ of cardinality at least two.
\end{proof}
%
%
\section[Dependence normal form]{Dependence normal form}\label{normalform}
%
In this section we define a normal form for bounded Boolean dependence logic and show that for each \bbd-formula there exists an equivalent $\bbd$-formula in this normal form. We use this normal form in Section \ref{equiv} to establish a translation from $\bbd$ to $\fopocp$. As a byproduct we also obtain translations from $\rbd$ into $\pocfo$ and from $\abd$ into $\pocqf$.

We start by introducing a normal form that does not allow any reuse of variables.
\begin{definition}
A formula $\varphi\in \bd$ is in \emph{variable normal form} if no variable in $\fr
(\varphi)$ is quantified in $\varphi$, and if each variable quantified in $\varphi$ is quantified exactly once.
\end{definition}
Note that, if a $\bd$-formula $\varphi$ is in variable normal form and $\psi$ is a subformula of $\varphi$ that has at least one quantifier in it then $\varphi$ has exactly one occurrence of the subformula $\psi$.
\begin{lemma}\label{varnormalform}
For every $\bd$ ($\bbd$, $\rbd$, $\abd$, respectively) formula there exists an equivalent $\bd$ ($\bbd$, $\rbd$, $\abd$, respectively) formula in variable normal form.
\end{lemma}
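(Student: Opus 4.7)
The plan is to proceed by induction on the structure of the formula, using Lemma \ref{change_variables} as the main tool for renaming variables and Lemma \ref{substitution_bd} to substitute equivalent subformulae within larger formulae. Throughout the induction I will maintain the invariant that the produced formula is equivalent to the input and lies in the same fragment.

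For the base cases (first-order literals, Boolean literals and dependence atoms) there are no quantifiers, so the formula is already in variable normal form. For a conjunction or disjunction $\varphi\circ\psi$ (where $\circ\in\{\wedge,\vee\}$), I would first apply the induction hypothesis to obtain equivalent variable-normal-form formulae $\varphi'$ and $\psi'$. Then, using Lemma \ref{change_variables}, I would rename all quantified variables of $\psi'$ to fresh variables that occur neither in $\varphi'$ nor in $\psi'$ (and in particular do not clash with $\fr(\varphi'\circ\psi')$), obtaining $\psi''\equiv\psi'$ in variable normal form. Then $\varphi'\circ\psi''$ is in variable normal form, and by Lemma \ref{substitution_bd} it is equivalent to the original $\varphi\circ\psi$. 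For a quantifier case $Q\chi\,\varphi$ (with $Q\in\{\forall,\exists\}$ and $\chi$ a first-order or Boolean variable), I apply the induction hypothesis to $\varphi$ to obtain an equivalent $\varphi'$ in variable normal form; if $\chi$ is itself free or quantified in $\varphi'$, I rename $\chi$ in the outer quantifier together with its free occurrences in $\varphi'$ to a fresh variable using Lemma \ref{change_variables}.

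The only nontrivial point is to check that the construction stays inside each of the fragments $\bbd$, $\rbd$ and $\abd$. This is immediate: renaming variables does not introduce or remove existential first-order quantifiers (so the $\abd$ restriction is preserved), does not move dependence atoms into or out of scopes of existential first-order quantifiers (so the $\rbd$ restriction is preserved), and in the $\bbd$ case the set $V([\psi,\varphi]_t)$ for an occurrence of a subformula $\psi$ under an existential quantifier transforms exactly according to the renaming, while the antecedent set $\{x_1,\dots,x_n\}$ of each dependence atom under $\psi$ transforms the same way; hence the inclusion $V([\psi,\varphi]_t)\subseteq\{x_1,\dots,x_n\}$ is preserved. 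Therefore the renamed formula remains in the same fragment.

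I do not anticipate any substantial obstacle: both Lemma \ref{change_variables} and Lemma \ref{substitution_bd} are already established, so the argument is essentially bookkeeping. The only mild care needed is to pick the fresh variables in the inductive step so that, when two subformulae are combined, the sets of quantified variables are disjoint and disjoint from the free variables of the combined formula; this is always possible since the set of variables is infinite while each formula uses only finitely many.
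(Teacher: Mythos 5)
Your overall approach is exactly the paper's: the paper's entire proof of this lemma is the citation ``Follows from Proposition \ref{prop:teamrestriction} and Lemmas \ref{substitution_bd} and \ref{change_variables}'', and your structural induction is the natural way to spell that citation out. The base cases, the connective and quantifier cases, and the preservation of the $\rbd$ and $\abd$ restrictions are all unproblematic.

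There is, however, one point where your argument as written would fail, namely the claim that the $\bbd$ condition is preserved because ``$V([\psi,\varphi]_t)$ transforms exactly according to the renaming''. This is false in the presence of nested requantification of the same variable. Consider the $\bbd$-sentence $\forall x\,\exists y\,\forall x\,\exists\alpha\,(\dep[x,y,\alpha]\wedge\theta)$ with $\theta$ quantifier-free: for the occurrence of the body $\psi_0$ of $\exists y$ we have $V([\psi_0,\varphi])=\{x,y\}$ and the antecedent of the atom is $\{x,y\}$, so the condition holds. After renaming apart one obtains $\forall x\,\exists y\,\forall x'\,\exists\alpha\,(\dep[x',y,\alpha]\wedge\theta')$; now $V$ of the body of $\exists y$ is still $\{x,y\}$ (the outer binder keeps its name), but the antecedent has become $\{x',y\}$, since the occurrences of $x$ in the atom were bound by the \emph{inner} binder. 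Hence $x\in V$ but $x\notin\{x',y\}$, and the renamed formula is no longer syntactically in $\bbd$. Repairing this needs an extra step beyond pure renaming --- for instance, enlarging the antecedent to $\dep[x,x',y,\alpha]$ and then arguing semantically (using Proposition \ref{prop:teamrestriction} and the fact that the shadowed outer variable is not free in the subformula under the existential quantifier, in the style of the manipulations in Proposition \ref{dep_nf}) that this enlargement does not change the meaning. To be fair, the paper's one-line proof glosses over exactly the same point; but since your write-up asserts the problematic invariant explicitly, you should either add this antecedent-adjustment argument or restrict to formulae without nested requantification by a separate preprocessing step.
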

\begin{proof}
Follows from Proposition \ref{prop:teamrestriction} and Lemmas \ref{substitution_bd} and \ref{change_variables}.
\end{proof}
The following normal form can be seen as a kind of a local prenex normal form for $\bbd$. The idea is that each universal first-order and existential Boolean quantifier is pulled toward a preceding existential first-order quantifier and then using Boolean dependence atoms each universal first-order quantifier is pulled past the preceding Boolean quantifiers.
\begin{definition}
A sentence $\varphi\in\bd$ is in \emph{$Q$-normal form} if $\varphi$ is in variable normal form and there exists a formula $\vartheta\in\bd$ such that the following holds.
\begin{enumerate}
\item $\varphi =\forall \vec{x}\exists\vec{\alpha}\vartheta$, for some (possibly empty) block of universal quantifiers $\forall \vec{x}$ followed by a (possibly empty) block of existential Boolean quantifiers $\exists\vec{\alpha}$.
\item Each quantifier in $\vartheta$ occurs in some block of quantifiers
$\exists \vec{x}\forall \vec{y}\exists\vec{\alpha}$, where at least $\vec{x}$ is nonempty.
\end{enumerate}
\end{definition}
%
%
The following lemmas are used to prove the $Q$-normal form for \bbd, i.e., Proposition \ref{Qnormalform}.
\begin{lemma}\label{quantifier_swap}
Let $\varphi$ and $\vartheta$ be formulae of Boolean dependence logic such that $x,\alpha\notin \fr(\vartheta)$. The following equivalences hold.
\begin{enumerate}
\item $(\forall x \varphi\lor\vartheta)\equiv \forall x (\varphi\lor\vartheta)$.
\item $(\forall x \varphi\land\vartheta)\equiv \forall x (\varphi\land\vartheta)$.
\item $(\exists \alpha \varphi\lor\vartheta)\equiv \exists \alpha (\varphi\lor\vartheta)$.
\item $(\exists \alpha \varphi\land\vartheta)\equiv \exists\alpha (\varphi\land\vartheta)$.
\end{enumerate}
\end{lemma}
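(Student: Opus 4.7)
The plan is to verify each of the four equivalences directly from the team semantics given in Definition~\ref{semantics of bd}, exploiting $x,\alpha\notin\fr(\vartheta)$ via Proposition~\ref{prop:teamrestriction}, together with the standard downward closure property of $\bd$ (which is inherited from $\df$ by a routine induction: the only extra cases, namely $\alpha$, $\neg\alpha$, the Boolean dependence atoms, and existential Boolean quantification, all preserve satisfaction under taking subteams).

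For (2) and (4) the argument is immediate. In case (2), $\mA\models_X\forall x\,\varphi\land\vartheta$ means $\mA\models_{X(A/x)}\varphi$ and $\mA\models_X\vartheta$; since $x\notin\fr(\vartheta)$, Proposition~\ref{prop:teamrestriction} gives $\mA\models_X\vartheta\Leftrightarrow\mA\models_{X(A/x)}\vartheta$, and combining yields $\mA\models_{X(A/x)}(\varphi\land\vartheta)$, i.e.\ $\mA\models_X\forall x(\varphi\land\vartheta)$. Case (4) is identical, with a witness $F\colon X\to\{\bot,\top\}$ and $X(F/\alpha)$ replacing $X(A/x)$.

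For the forward direction of (1), if $X=Y\cup Z$ witnesses $\mA\models_X\forall x\,\varphi\lor\vartheta$, set $Y'=Y(A/x)$ and $Z'=Z(A/x)$; then $Y'\cup Z'=X(A/x)$, $\mA\models_{Y'}\varphi$ by assumption, and $\mA\models_{Z'}\vartheta$ by Proposition~\ref{prop:teamrestriction} since $x\notin\fr(\vartheta)$. The forward direction of (3) is analogous, extending the witnessing function $F\colon Y\to\{\bot,\top\}$ for $\exists\alpha\,\varphi$ on the $Y$-side to an arbitrary $F'\colon X\to\{\bot,\top\}$ and splitting $X(F'/\alpha)$ as $Y(F/\alpha)\cup\{s(F'(s)/\alpha)\mid s\in Z\}$, again using Proposition~\ref{prop:teamrestriction} to transfer $\mA\models_Z\vartheta$.

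The genuine work lies in the backward directions of (1) and (3), where a split of $X(A/x)$ or $X(F'/\alpha)$ must be pulled back to a split of $X$. For (1), assume $X(A/x)=Y'\cup Z'$ with $\mA\models_{Y'}\varphi$ and $\mA\models_{Z'}\vartheta$, and define
\[
Y\dfn\{s\in X\mid s(a/x)\in Y'\text{ for all }a\in A\},\qquad Z\dfn X\setminus Y.
\]
By construction $Y(A/x)\subseteq Y'$, so downward closure gives $\mA\models_{Y(A/x)}\varphi$, i.e.\ $\mA\models_Y\forall x\,\varphi$. For each $s\in Z$ choose $a_s$ with $s(a_s/x)\notin Y'$, hence $s(a_s/x)\in Z'$; downward closure yields $\mA\models_{\{s(a_s/x)\mid s\in Z\}}\vartheta$, and since $x\notin\fr(\vartheta)$, Proposition~\ref{prop:teamrestriction} gives $\mA\models_Z\vartheta$. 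Case (3) is handled identically, defining $Y=\{s\in X\mid s(F'(s)/\alpha)\in Y'\}$ and letting the restriction $F'\upharpoonright Y$ witness $\exists\alpha\,\varphi$ on $Y$. The main obstacle is precisely this backward step, which silently relies on downward closure of $\bd$; invoking it (with a pointer to the analogue for $\df$ in \cite{va07}) is the cleanest way to close the argument.
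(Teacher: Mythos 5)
Your proof is correct and follows essentially the same route as the paper's: a direct verification from the team semantics using Proposition~\ref{prop:teamrestriction}, with cases (2) and (4) immediate and the work concentrated in the disjunction cases. You are in fact more careful than the paper at the one delicate point: the paper's chain of equivalences justifies the passage between splits of $X$ and splits of $X(A/x)$ only in the easy direction, whereas you make explicit that pulling an arbitrary split of $X(A/x)$ (or of $X(F'/\alpha)$) back to a split of $X$ requires the downward closure of \bd, a property the paper uses tacitly and never states.
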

\begin{proof}
Each claim follows straightforwardly from Proposition \ref{prop:teamrestriction}. We prove here claim $1$. Claims 2--4 are completely analogous.

Let $\mA$ be a model and $X$ a team of $A$. The claim follows from the following chain of equivalences.
\begin{align*}
\mA&\models_X (\forall x \varphi\lor\vartheta) \\
&\Leftrightarrow \mA\models_{Y} \forall x \varphi \text{ and } \mA\models_{Z}\vartheta \text{, for some $Y$ and $Z$ such that $Y\cup Z=X$}\\
&\Leftrightarrow \mA\models_{Y(A/x)}\varphi \text{ and } \mA\models_{Z(A/x)}\vartheta \text{, for some $Y$ and $Z$ such that $Y\cup Z=X$}\\
&\Leftrightarrow \mA\models_{Y'}\varphi \text{ and } \mA\models_{Z'}\vartheta \text{, for some $Y'$ and $Z'$ such that $Y'\cup Z'=X(A/x)$}\\
&\Leftrightarrow \mA\models_{X(A/x)} (\varphi\vee\vartheta)\\
&\Leftrightarrow \mA\models_{X} \forall x(\varphi\vee\vartheta).
\end{align*}
The first and the fourth equivalence is due to the semantics of disjunctions. The second equivalence follows from the semantics of universal quantifiers, Proposition \ref{prop:teamrestriction} and the fact that $x\not\in\fr(\vartheta)$. The third equivalence follows from the observation that $Y(A/x)\cup Z(A/x)= X(A/x)$, from Proposition \ref{prop:teamrestriction} and the fact that $x\not\in\fr(\vartheta)$. Finally, the last equivalence is due to the semantics of universal quantifiers.
\end{proof}
\begin{lemma}\label{quantifier_order}
Let $\varphi$ be a \bd-sentence and
$\psi = \exists \alpha \forall \vec{x}\, \exists \vec{\beta}\, \vartheta$
a subformula of $\varphi$. Let $[\psi,\varphi]_n$ denote an occurrence of $\psi$ in $\varphi$ and let $\varphi^*$ denote the formula obtained form $\varphi$ by substituting the occurrence $[\psi,\varphi]_n$ of $\psi$ by
\[
\forall \vec{x}\, \exists \alpha \exists \vec{\beta}\, \big(\dep[{V([\psi,\varphi]_n), \alpha}] \land \vartheta\big).
\]
Then 
$
\varphi \equiv \varphi^*.
$
\end{lemma}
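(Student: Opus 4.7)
The approach I would take is to prove $\varphi\equiv\varphi^*$ by a direct semantic argument rather than via a local equivalence at the subformula level. This detour is necessary because, writing $\psi^{\diamond}:=\forall\vec{x}\,\exists\alpha\,\exists\vec{\beta}\,(\dep[V,\alpha]\land\vartheta)$ with $V:=V([\psi,\varphi]_n)$, the formulas $\psi$ and $\psi^{\diamond}$ need not share the same free variables (the replacement introduces $V$ through the antecedent of the new dependence atom), and in fact the local equivalence $\psi\equiv\psi^{\diamond}$ fails on arbitrary teams, so Lemma~\ref{substitution_bd} does not apply. By Lemma~\ref{varnormalform} I may assume $\varphi$ is in variable normal form, ensuring that $\vec{x},\alpha,\vec{\beta}$ are fresh with respect to the outer context.

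The key auxiliary fact is the \emph{first-order uniqueness property}: every team arising during the evaluation of a \bd-sentence has no two distinct assignments agreeing on all first-order variables in its domain. This follows by an easy induction, since each team operation $X\mapsto X(F/x)$, $X\mapsto X(A/x)$, $X\mapsto X(G/\alpha)$, and each disjunction split $X=X_1\cup X_2$, preserves the property, and the initial team $\{\emptyset\}$ trivially satisfies it. At the occurrence $[\psi,\varphi]_n$ the arising team $Y$ has this property, and $V$ coincides with the first-order variables in $\dom(Y)$, so every function $F\colon Y\to\{\bot,\top\}$ is automatically $V$-determined.

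Using this, I establish the local equivalence $\mA\models_Y\psi\Leftrightarrow\mA\models_Y\psi^{\diamond}$ for such a $Y$. For $(\Rightarrow)$: from witnesses $F\colon Y\to\{\bot,\top\}$ for $\exists\alpha$ and $\vec{G}$ for $\exists\vec{\beta}$, define $F'\colon Y(A/\vec{x})\to\{\bot,\top\}$ by $F'(s(\vec{a}/\vec{x})):=F(s)$, which is well-defined since $\vec{x}$ is fresh. If $s_1(\vec{a}_1/\vec{x}),s_2(\vec{a}_2/\vec{x})\in Y(A/\vec{x})$ agree on $V$, then $s_1,s_2\in Y$ agree on $V$ (using $\vec{x}\cap V=\emptyset$), hence $s_1=s_2$ by first-order uniqueness, and the $F'$-values coincide. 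Thus $F'$ is $V$-determined, the atom $\dep[V,\alpha]$ holds, and since $Y(A/\vec{x})(F'/\alpha)=Y(F/\alpha)(A/\vec{x})$, the same $\vec{G}$ continues to witness $\exists\vec{\beta}$ with $\vartheta$ holding on the resulting team. For $(\Leftarrow)$: any $V$-determined $F'$ on $Y(A/\vec{x})$ satisfies $F'(s(\vec{a}/\vec{x}))=F'(s(\vec{a}'/\vec{x}))$ for all $\vec{a},\vec{a}'$ (again by $\vec{x}\cap V=\emptyset$), so $F(s):=F'(s(\vec{a}_0/\vec{x}))$ is well-defined and reverses the construction. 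A routine structural induction on the outer shape of $\varphi$ (each semantic operation preserves first-order uniqueness) then propagates the local equivalence through the surrounding quantifiers, connectives and disjunction splits, yielding $\mA\models\varphi\Leftrightarrow\mA\models\varphi^*$.

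The main obstacle is exactly this non-locality: $\psi$ and $\psi^{\diamond}$ are not equivalent as standalone formulas, so one must reason in the global context of a sentence and rely on the first-order uniqueness of evaluation teams to carry out the witness conversion between the unconstrained function $F$ and the $V$-determined function $F'$.
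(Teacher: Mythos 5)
Your proof is correct, and it is a genuine contribution here: the paper's own proof of Lemma~\ref{quantifier_order} consists of the single word ``Straightforward,'' so there is no argument to compare against, and your writeup supplies exactly the details being suppressed. More importantly, you correctly identify why the lemma is \emph{not} an instance of Lemma~\ref{substitution_bd}: writing $\psi^{\diamond}$ for the replacement formula, the free variables differ ($\fr(\psi^{\diamond})$ acquires all of $V([\psi,\varphi]_n)$ through the new dependence atom) and, as your implicit counterexample shows, $\psi\equiv\psi^{\diamond}$ genuinely fails on a team containing two assignments that agree on all first-order variables but differ on a Boolean one (then $\exists\alpha$ placed before $\forall\vec{x}$ can split on that Boolean value, while the $V$-determined choice after $\forall\vec{x}$ cannot). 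Your ``first-order uniqueness'' invariant --- that every team arising in the evaluation of a sentence in variable normal form is separated by its first-order coordinates, so that its first-order domain is exactly $V([\psi,\varphi]_n)$ and every $F\colon Y\to\{\bot,\top\}$ is automatically $V$-determined --- is precisely the fact that makes the quantifier swap sound, together with the team identity $Y(F/\alpha)(A/\vec{x})=Y(A/\vec{x})(F'/\alpha)$ for the transported witness. The one point worth making explicit in a final writeup is that the variable-normal-form assumption is doing real work in the induction establishing the invariant (re-quantifying a first-order variable already in the domain could collapse two assignments onto a purely Boolean difference); this is consistent with how the lemma is actually invoked inside Proposition~\ref{Qnormalform}, where the sentences $\varphi^*_i$ are all in variable normal form. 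The closing contextual induction that propagates the local equivalence through connectives, quantifiers and disjunction splits is routine, as you say, since every semantic operation preserves the invariant.
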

\begin{proof}
Straightforward.
\end{proof}
We are now ready to prove that for every $\bbd$-sentence there exists an equivalent $\bbd$-sentence in $Q$-normal form.
\begin{proposition}\label{Qnormalform}
For every $\bbd$-sentence there exists an equivalent $\bbd$-sentence in $Q$-normal form.
\end{proposition}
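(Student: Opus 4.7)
The plan is to proceed by induction on the structure of the formula, after first applying Lemma \ref{varnormalform} to assume without loss of generality that the given $\bbd$-sentence $\varphi$ is in variable normal form; all subsequent transformations preserve this. I will prove by induction on subformulas $\psi$ of $\varphi$ that $\psi$ is equivalent to a $\bbd$-formula $\psi'$ of the shape $\forall\vec{u}\,\exists\vec{\gamma}\,\chi$, where $\vec{u}$ and $\vec{\gamma}$ may be empty and every quantifier in the body $\chi$ occurs inside some block $\exists\vec{x}\,\forall\vec{y}\,\exists\vec{\alpha}$ with $\vec{x}$ nonempty. Applying this claim to $\varphi$ itself yields the $Q$-normal form.

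The base case (atoms) is immediate with an empty quantifier prefix. For $\psi=\forall x\varphi'$ I would absorb the outer $\forall x$ into the inductive prefix $\forall\vec{u}$; for $\psi=\exists x\varphi'$ the form $\exists x\,\forall\vec{u}\,\exists\vec{\gamma}\,\chi$ is itself a single valid block (the first-order existential part is nonempty), so the outer prefix is empty; for $\psi=\exists\alpha\varphi'$ I would commute $\exists\alpha$ past the inner $\forall\vec{u}$ block via Lemma \ref{quantifier_order}, introducing a fresh atom $\dep[V,\alpha]$. For $\psi=\varphi_1\circ\varphi_2$ with $\circ\in\{\wedge,\vee\}$: given $\varphi_i\equiv\forall\vec{u}_i\,\exists\vec{\gamma}_i\,\chi_i$ by the inductive hypothesis, I would pull the two prefixes past $\circ$ using Lemma \ref{quantifier_swap} (legal by variable normal form), obtain $\forall\vec{u}_1\,\exists\vec{\gamma}_1\,\forall\vec{u}_2\,\exists\vec{\gamma}_2\,(\chi_1\circ\chi_2)$, and then apply Lemma \ref{quantifier_order} once for each $\gamma\in\vec{\gamma}_1$ to commute $\exists\vec{\gamma}_1$ past $\forall\vec{u}_2$, arriving at
\[
\forall\vec{u}_1\vec{u}_2\,\exists\vec{\gamma}_1\vec{\gamma}_2\,\Bigl(\bigwedge_{\gamma\in\vec{\gamma}_1}\dep[V,\gamma]\wedge\chi_1\circ\chi_2\Bigr),
\]
with $V$ equal to the first-order variables in scope of the original $\exists\vec{\gamma}_1$, i.e.\ $\vec{u}_1$ together with the free first-order variables of $\psi$ in $\varphi$.

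The main obstacle will be ensuring that the resulting formula is in $\bbd$ and not merely in $\bd$. Once $\forall\vec{u}_2$ has been pulled above $\chi_1$, the universal variables $\vec{u}_2$ enter the scope of any internal first-order existential $\exists z$ occurring inside $\chi_1$, and the $\bbd$ condition then demands that the antecedents of the dep atoms of $\chi_1$ below $\exists z$ be enlarged to contain $\vec{u}_2$ (symmetrically for $\chi_2$ and $\vec{u}_1$). The delicate step is to verify that this syntactic enlargement preserves semantics: it does so because the new atoms $\dep[V,\vec{\gamma}_1]$, together with analogous iterated applications of Lemma \ref{quantifier_order} inside each block of the $\chi_i$, force every Boolean consequent in $\chi_i$ to depend only on the original antecedents, so that on every team produced by the outer quantifier prefix the strengthened and unstrengthened dep atoms are equisatisfied, and the substitution is justified via Lemma \ref{substitution_bd}.
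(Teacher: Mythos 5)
Your proposal follows essentially the same route as the paper's proof: reduce to variable normal form via Lemma~\ref{varnormalform}, hoist universal first-order and existential Boolean quantifiers out of conjunctions and disjunctions via Lemma~\ref{quantifier_swap}, and then commute existential Boolean quantifiers past universal blocks via Lemma~\ref{quantifier_order}, inserting the compensating atoms $\dep[V,\alpha]$. The only organizational difference is that you package this as a structural induction on subformulae with the invariant ``equivalent to $\forall\vec{u}\,\exists\vec{\gamma}\,\chi$ with a well-blocked body'', whereas the paper runs two terminating rewriting passes to a fixpoint; the two are interchangeable here.

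The point where you diverge is your final paragraph, and there the argument does not close. You are right that hoisting $\forall\vec{u}_2$ above $\chi_1$ enlarges $V$ at every position inside $\chi_1$, so that dependence atoms lying under an internal $\exists z$ of $\chi_1$ violate the \bbd{} side condition unless their antecedents are enlarged by $\vec{u}_2$; this is a genuine subtlety, which the paper's own proof passes over in silence when it asserts that every $\varphi_i$ is a \bbd-sentence. But your proposed repair cannot be justified by Lemma~\ref{substitution_bd}: that lemma requires the exchanged formulae to be equivalent over \emph{all} models and teams and to have the \emph{same} free variables, whereas $\dep[\vec{v},\alpha]$ and $\dep[\vec{v},\vec{u}_2,\alpha]$ have different free-variable sets and the latter is strictly weaker in general. ``Equisatisfied on every team produced by the outer quantifier prefix'' is a context-dependent claim that no substitution lemma in the paper lets you act on, and the supporting reason you give (that the block-level atoms $\dep[V,\vec{\gamma}_1]$ force the deeper Boolean consequents to depend only on their original antecedents) is circular, since those deeper constraints are exactly the ones being weakened. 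What is actually needed is a free-standing equivalence between the hoisted formula and its antecedent-enlarged variant when $\vec{u}_2$ does not occur free in $\chi_1$; its nontrivial direction (recovering the stronger atoms from the weaker ones) requires, for instance, collapsing the witnessing choice functions along representatives of the $\vec{u}_2$-fibres and using downward closure, and this argument is absent from your sketch.
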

\begin{proof}
Let $\varphi\in\bbd$ be a sentence. By Lemma~\ref{varnormalform} we can assume that $\varphi$ is in variable normal form. We will give an algorithm that transforms $\varphi$ into an equivalent $\bbd$-sentence in $Q$-normal form. 

We will first transform $\varphi$ to an equivalent $\bbd$-sentence $\varphi^*$ such that
\begin{equation}\label{eq:transformation}
\varphi^*=\vv{Q \xi} \psi,
\end{equation}
where $\vv{Q \xi}$ is a (possibly empty) vector of universal first-order and existential Boolean quantifiers. Furthermore, in $\psi$ every universal first-order or existential Boolean quantifier $Q \chi$ occurs in a subformula $\vartheta$ of $\psi$ such that
\[
\vartheta=Q'\eta Q\chi\gamma,
\]
where $Q' \eta$ is a quantifier and $\gamma$ is a $\bbd$ formula. In order to obtain $\varphi^*$ from $\varphi$, we use the equivalences from Lemma \ref{quantifier_swap} repetitively substituting subformulae with equivalent subformulae. More precisely, there exists a natural number $n\in\mathbb{N}$ and a tuple $(\varphi_i)_{i\leq n}$ of $\bbd$-sentences such that $\varphi_0=\varphi$ and $\varphi_n=\varphi^*$. Furthermore,
\begin{enumerate}
\item\label{item:substitution}
for each $i<n$ there exist subformulae $\vartheta$, $\psi_1$ and $\psi_2$ of $\varphi_i$ such that
\[
\vartheta=(Q\chi\psi_1\otimes\psi_2) \text{ (or $\vartheta=(\psi_1\otimes Q\chi\psi_2)$ )},
\]
where $Q\chi\in\{\forall x,\exists \alpha\}$ and $\otimes\in\{\vee,\wedge\}$, and
$\varphi_{i+1}$ is obtained from $\varphi_{i}$ by substituting $\vartheta$ by $Q\chi(\psi_1\otimes\psi_2)$.
\end{enumerate}
It is easy to see, that for each $\bbd$-sentence the substitution procedure described in \ref{item:substitution} terminates, i.e., there exists some $n\in \mathbb{N}$ such that there are no subformulae $\vartheta$, $\psi_1$ and $\psi_2$ of $\varphi_n$ such that the substitution described in \ref{item:substitution} can be carried out. Clearly the sentence $\varphi_n$ is then in the form described in (\ref{eq:transformation}).
By induction it is easy to show, that since $\varphi_0$ is in variable normal form it follows that $\varphi_i$ is in variable normal form, for all $i \geq 0$. Hence, the assumptions on free variables needed for Lemma~\ref{quantifier_swap} hold for each $\varphi_i$.
By Lemmas \ref{quantifier_swap} and \ref{substitution_bd}, we conclude that, for each $i<n$, the sentences $\varphi_i$ and $\varphi_{i+1}$ are equivalent. Hence the sentences $\varphi_0$ and $\varphi_n$ are equivalent.

We still need to transform the sentence $\varphi^*$ into an equivalent sentence $\varphi'$ in $Q$-normal form. In order to obtain $\varphi'$ from $\varphi^*$ we use the equivalence from Lemma \ref{quantifier_order} repetitively.
More precisely, there exists a natural number $m\in\mathbb{N}$ and a tuple $(\varphi^*_i)_{i\leq m}$ of $\bbd$-sentences such that $\varphi^*_0=\varphi^*$ and $\varphi^*_m=\varphi'$. Furthermore,
\begin{enumerate}
\setcounter{enumi}{1}
\item\label{item:substitution2} for each $i<m$ there exists subformulae $\vartheta$ and $\psi$ of $\varphi^*_i$, and a quantifier $\exists \alpha$ such that
\[
\vartheta=\exists \alpha  \forall\vec{x} \exists\vec{\beta} \psi,
\]
where $\forall\vec{x}$ is a nonempty vector of universal first-order quantifiers and $\psi$ does not start with a Boolean existential or universal first-order quantifier, and
$\varphi^*_{i+1}$ is obtained from $\varphi^*_{i}$ by substituting $\vartheta$ by
\[
\forall\vec{x}\, \exists \alpha\, \exists\vec{\beta}\,\big(\dep[{V([\vartheta,\varphi^*_i]),\alpha}]\wedge\psi\big).
\]
\end{enumerate}
By Lemmas \ref{quantifier_order} and \ref{substitution_bd}, we conclude that, for each $i$, the sentences $\varphi^*_i$ and $\varphi^*_{i+1}$ are equivalent. It is easy to see that, for each $\bbd$-sentence the substitution procedure described above terminates, i.e., there exists some $m\in\mathbb{N}$ such that there are no subformulae of $\varphi^*_m$ that can be substituted as described in \ref{item:substitution2}. Now clearly $\varphi^*_m$ is in $Q$-normal form.
\end{proof}
We are finally ready to define dependence normal form. The idea here is that a $\bbd$-sentence in $Q$-normal form is in dependence normal form if there is one-to-one correspondence between Boolean existential quantifiers and Boolean dependence atoms such that each quantifier $\exists \alpha$ is immediately followed by the corresponding dependence atom $\dep[\vec{x},\alpha]$, and conversely each dependence atom is directly preceded by the corresponding Boolean quantifier.
\begin{definition}\label{depnor}
A sentence $\varphi\in\bd$ is in \emph{dependence normal form} if
\begin{enumerate}
\item $\varphi$ is in $Q$-normal form,
\item for every Boolean variable $\alpha$ it holds that if $[\dep[\vec{x},\alpha],\varphi]_t$ and $[\dep[\vec{y},\alpha],\varphi]_l$ are occurrences in  $\varphi$ then $[\dep[\vec{x},\alpha],\varphi]_t=[\dep[\vec{y},\alpha],\varphi]_l$,
%
%
\item
for every maximal nonempty block of Boolean existential quantifiers $\exists\vec{\alpha}$ in $\varphi$ there exists a subformula 
\[
\exists\vec{\alpha}\Big(\big(\bigwedge_{1\leq i\leq n}\dep[\vec{x}_i,\alpha_i]\big)\wedge \psi\Big)
\]
of $\varphi$ such that the Boolean variables $\alpha_i$, $1\leq i\leq n$, are exactly the variables quantified in $\exists\vec{\alpha}$.
\end{enumerate}
\end{definition}
To simplify the proof of Proposition \ref{dep_nf}, we introduce the concepts of evaluation and satisfying evaluation.
\begin{definition}\label{evaluation}
Let $\mA$ be a model, $X$ a team of $\mA$ and $\varphi$ a $\bd$-formula. Define that 
\begin{align*}
\mi{SubOc}(\varphi)&\dfn \{[\psi,\varphi]_t\mid \text{$[\psi,\varphi]_t$ is an occurrence of $\psi$ in $\varphi$} \}\\
\mi{Teams}(\mA)&\dfn \{Y\mid \text{$Y$ is a team of $\mA$}\}.
\end{align*}
We say that a function $e:\mi{SubOc}(\varphi)\to \mi{Teams}(\mA)$ is an \emph{evaluation} of $\varphi$ on the model $\mA$ and team $X$ if the following recursive conditions hold.
\begin{enumerate}
\item $e([\varphi, \varphi]) = X$.
\item If $e([\psi\vee\vartheta, \varphi]_t)= Y$, and $[\psi, \varphi]_n$ and $[\vartheta, \varphi]_m$ are the occurrences of $\psi$ and $\vartheta$ in $[\psi\vee\vartheta, \varphi]_t$, then there exists $Y_0$ and $Y_1$ such that $Y_0\cup Y_1=Y$ and $e([\psi, \varphi]_n)= Y_0$ and $e([\vartheta, \varphi]_m)= Y_1$.
\item If $e([\psi\wedge\vartheta, \varphi]_t)= Y$, and $[\psi, \varphi]_n$ and $[\vartheta, \varphi]_m$ are the occurrences of $\psi$ and $\vartheta$ in $[\psi\wedge\vartheta, \varphi]_t$, then $e([\psi, \varphi]_n)= Y$ and $e([\vartheta, \varphi]_m)= Y$.
\item If $e([\exists x\psi, \varphi]_t)= Y$ and $[\psi,\varphi]_{n}$ is the occurrence of $\psi$ in $[\exists x\psi, \varphi]_t$ then $e([\psi, \varphi]_n)= Y(F/x)$ for some function $F:Y\to A$.
\item If $e([\exists \alpha\psi, \varphi]_t)= Y$ and $[\psi,\varphi]_{n}$ is the occurrence of $\psi$ in $[\exists \alpha\psi, \varphi]_t$ then $e([\psi, \varphi]_n)= Y(F/\alpha)$ for some function $F:Y\to \{\bot,\top\}$.
\item If $e([\forall x\psi, \varphi]_t)= Y$ and $[\psi,\varphi]_{n}$ is the occurrence of $\psi$ in $[\forall x\psi, \varphi]_t$ then $e([\psi, \varphi]_n)= Y(A/x)$.
\end{enumerate}
We say that the evaluation $e$ is an \emph{successful evaluation} if for each occurrence $[\psi,\varphi]_t$ such that $\psi$ is a Boolean dependence atom or a literal
\[
\mA \models_{e([\psi,\vartheta]_t)} \psi.
\]
\end{definition}
The following results follow directly from the semantics of Boolean dependence logic and the definition of successful evaluation.
\begin{proposition}\label{sucevap}
Let $\mA$ be a model, $X$ a team of $\mA$, $\varphi$ a $\bd$-formula and $e$ a successful evaluation of $\varphi$ on the model $\mA$ and team $X$. For every occurrence $[\psi, \varphi]_t\in \mi{SubOc}(\varphi)$
\[
\mA\models_{e([\psi, \varphi]_t)} \psi.
\]   
\end{proposition}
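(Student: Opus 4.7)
The plan is to prove the statement by induction on the structure of the subformula $\psi$, or equivalently on the height of the syntactic subtree rooted at the occurrence $[\psi,\varphi]_t$. Writing $Y \dfn e([\psi,\varphi]_t)$, the goal is to show $\mA\models_Y\psi$.

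The base case covers literals (first-order atoms and their negations, Boolean variables $\alpha$, negated Boolean variables $\neg\alpha$) and Boolean dependence atoms $\dep[x_1,\dots,x_n,\alpha]$. For any such $\psi$, the conclusion $\mA\models_Y\psi$ is exactly the definition of a successful evaluation (the final clause of Definition~\ref{evaluation}), so nothing more is required.

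For the inductive step, the point is that clauses~(2)--(6) of Definition~\ref{evaluation} are deliberately engineered to mirror the team-semantic clauses of Definition~\ref{semantics of bd}, but read from the parent occurrence down to its children. In each case I would invoke the relevant clause to relate $Y$ to the teams assigned to the immediate child occurrences of $\psi$ in $\varphi$, apply the inductive hypothesis at those children, and then read off $\mA\models_Y\psi$ directly from the team semantics. Concretely, for $\psi=\psi_0\vee\psi_1$ clause~(2) provides teams $Y_0,Y_1$ with $Y_0\cup Y_1=Y$ and $e([\psi_i,\varphi]_{n_i})=Y_i$ for the appropriate child occurrences; the inductive hypothesis yields $\mA\models_{Y_i}\psi_i$, and the disjunction clause of the semantics gives $\mA\models_Y\psi_0\vee\psi_1$. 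The cases $\psi=\psi_0\wedge\psi_1$, $\psi=\exists x\,\psi_0$, $\psi=\exists\alpha\,\psi_0$ and $\psi=\forall x\,\psi_0$ are handled in exactly the same way, using clauses~(3), (4), (5) and (6), respectively.

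There is no genuine obstacle here; the proposition is essentially a restatement of Definition~\ref{evaluation} in the language of team semantics, and the argument is one page of purely mechanical case analysis. The only small piece of book-keeping that needs attention is making sure that when the inductive hypothesis is applied to a child subformula $\psi_i$, it is applied at the specific occurrence $[\psi_i,\varphi]_{n_i}$ sitting inside $[\psi,\varphi]_t$ rather than at some other, unrelated occurrence of the same syntactic object elsewhere in $\varphi$. Definition~\ref{evaluation} is phrased precisely so that this matching of child occurrences to parent occurrences is canonical, so this causes no real trouble.
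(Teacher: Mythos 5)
Your proof is correct and matches the paper's (implicit) argument: the paper simply asserts that Proposition~\ref{sucevap} ``follows directly from the semantics of Boolean dependence logic and the definition of successful evaluation,'' and your structural induction on occurrences --- base case from the success condition on literals and dependence atoms, inductive step matching clauses~(2)--(6) of Definition~\ref{evaluation} against the team-semantic clauses of Definition~\ref{semantics of bd} --- is exactly the routine verification being alluded to. Your remark about applying the inductive hypothesis at the correct child occurrence is the right piece of book-keeping to flag.
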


\begin{theorem}\label{suceva}
Let $\mA$ be a model, $X$ a team of $\mA$ and $\varphi$ a $\bd$-formula. The following are equivalent:
\begin{enumerate}
\item $\mA\models_X \varphi$.
\item There exists a successful evaluation of $\varphi$ on the model $\mA$ and team $X$.
\end{enumerate}
\end{theorem}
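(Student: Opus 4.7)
The plan is to prove both implications by following the team semantics of Boolean dependence logic in opposite directions: $(2)\Rightarrow(1)$ says ``success at the atomic leaves together with the bookkeeping of Definition \ref{evaluation} forces truth at the root'', while $(1)\Rightarrow(2)$ says ``truth at the root permits a recursive selection of witnesses that assemble into a successful evaluation''.

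The backward direction $(2)\Rightarrow(1)$ should be immediate from Proposition \ref{sucevap}. Given a successful evaluation $e$ of $\varphi$ on $\mA$ and $X$, I apply the proposition to the root occurrence $[\varphi,\varphi]$ and use clause (1) of Definition \ref{evaluation}, namely $e([\varphi,\varphi]) = X$, to conclude $\mA\models_X\varphi$.

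For $(1)\Rightarrow(2)$ I would proceed by induction on the structure of $\varphi$, constructing a successful evaluation in lockstep with the witnesses supplied by the semantic clauses. In the base cases ($\varphi$ a literal or a Boolean dependence atom), the only element of $\mi{SubOc}(\varphi)$ is $[\varphi,\varphi]$; setting $e([\varphi,\varphi])=X$ vacuously satisfies Definition \ref{evaluation}, and the hypothesis $\mA\models_X\varphi$ gives successfulness directly. For the inductive step with $\varphi = \psi\vee\vartheta$, the semantics of $\vee$ supplies teams $Y,Z$ with $Y\cup Z = X$, $\mA\models_Y\psi$ and $\mA\models_Z\vartheta$; the inductive hypothesis yields successful evaluations $e_\psi$ and $e_\vartheta$ on $(\mA,Y)$ and $(\mA,Z)$, which I glue together by declaring $e([\varphi,\varphi])=X$ and letting $e$ agree with $e_\psi$ (respectively $e_\vartheta$) on occurrences inside the left (respectively right) disjunct. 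The remaining cases $\wedge$, $\forall x$, $\exists x$ and $\exists\alpha$ are completely analogous: extract the team named in the relevant semantic clause ($X$ itself for $\wedge$, $X(A/x)$ for $\forall$, $X(F/x)$ or $X(F/\alpha)$ for the two existentials), invoke the inductive hypothesis on that team, and concatenate sub-evaluations into $e$.

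The one thing that requires genuine care is the bookkeeping of occurrences. A subformula occurrence $[\chi,\psi]_t$ inside an inductively handled subformula $\psi$ must be reindexed as the corresponding occurrence $[\chi,\varphi]_{t'}$ inside the larger formula $\varphi$, and the recursive conditions of Definition \ref{evaluation} must then be verified at the new root $[\varphi,\varphi]$. These verifications are immediate from the definitions but constitute the bulk of the notational work; this is the main, though purely administrative, obstacle I expect.
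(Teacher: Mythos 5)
Your proof is correct and matches the paper's intent: the paper states Theorem \ref{suceva} (together with Proposition \ref{sucevap}) without proof, remarking only that it ``follows directly from the semantics of Boolean dependence logic and the definition of successful evaluation,'' and your argument --- the backward direction via Proposition \ref{sucevap} at the root occurrence, the forward direction by structural induction gluing sub-evaluations along the witnesses provided by each semantic clause --- is exactly the direct argument being alluded to. The occurrence-reindexing bookkeeping you flag is indeed the only nontrivial administrative point, and your treatment of it is sound.
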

Hence the concepts of satisfaction and successful evaluation coincide. We are now ready to prove that every $\bbd$-sentence there exists an equivalent $\bbd$-sentence in dependence normal form.
The proof is quite long and technical.
\begin{proposition}\label{dep_nf}
For every $\bbd$-sentence there exists an equivalent $\bbd$-sentence in dependence normal form.
\end{proposition}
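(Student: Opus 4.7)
The plan is to start from a $Q$-normal form sentence (available via Proposition~\ref{Qnormalform}, combined with variable normal form from Lemma~\ref{varnormalform}) and rewrite it so that every Boolean variable is paired with a single canonical dependence atom placed immediately inside its quantifier block, with every other dependence atom for that variable eliminated. For each maximal Boolean existential block $\exists\vec{\alpha}=\exists\alpha_1\cdots\exists\alpha_n$ occurring as $[\exists\vec{\alpha}\,\psi,\varphi]_t$, I would let $\vec{w}\dfn V([\exists\vec{\alpha}\,\psi,\varphi]_t)$ and replace the subformula by
\[
\exists\vec{\alpha}\,\Bigl(\bigwedge_{i=1}^{n}\dep[\vec{w},\alpha_i]\wedge\psi^*\Bigr),
\]
where $\psi^*$ is obtained from $\psi$ by deleting every dependence atom of the form $\dep[\vec{z},\alpha_i]$ for $i\le n$. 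After handling every maximal Boolean block this way, conditions~1, 2 and 3 of Definition~\ref{depnor} are all in place: $Q$-normal form is preserved by the local rewrite, each $\alpha_i$ retains a single surviving dependence atom, and that atom sits as a conjunct directly inside the corresponding $\exists\vec{\alpha}$.

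Equivalence of the transformed sentence with $\varphi$ would be established via Theorem~\ref{suceva} by translating successful evaluations in both directions. The direction from the transformed sentence back to the original is the straightforward one: for an internal block, the $\bbd$ restriction guarantees that every deleted in-body atom $\dep[\vec{z},\alpha_i]$ satisfies $\vec{z}\supseteq V(\text{its position})\supseteq\vec{w}$, and is therefore implied by the newly inserted canonical atom $\dep[\vec{w},\alpha_i]$.

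The hard part, and the principal technical obstacle, is the converse direction: given a successful evaluation of the original sentence, producing one of the transformed sentence. In the original, the Boolean choice function for $\alpha_i$ may genuinely depend on outer Boolean variables or on disjunction-specific splits, whereas the canonical atom forbids any such dependence. My plan is to process the blocks from the outermost inwards, maintaining the invariant that, once canonical atoms are installed for all ancestor blocks, the assignments in every team in the evaluation are uniquely determined by their projections onto the first-order variables currently in scope; the Boolean choice functions for the current block can then be rechosen to be $\vec{w}$-determined without disturbing any literal. A subtle additional point is that a dependence atom $\dep[\vec{z},\alpha_i]$ for an outermost $\alpha_i$ occurring outside every existential first-order quantifier need not respect $\vec{z}\supseteq\vec{w}$, so the naive canonical atom $\dep[\vec{w},\alpha_i]$ would be too weak; to handle such atoms without losing information, I would split $\alpha_i$ into distinct fresh Boolean variables, one per offending occurrence, each carrying its own canonical atom $\dep[\vec{z},\beta]$ inside the outermost Boolean block. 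Making this outermost splitting and the inductive canonicalization precise, together with tracking how the disjunction-splits of Definition~\ref{evaluation} interact with the restriction of the Boolean choice functions, is where the technical weight of the proof will lie.
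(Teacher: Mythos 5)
Your reduction to a single canonical atom $\dep[\vec{w},\alpha_i]$ with $\vec{w}=V([\exists\vec{\alpha}\,\psi,\varphi]_t)$, followed by deletion of all other dependence atoms for $\alpha_i$, is not equivalence-preserving, and the step you call ``the straightforward one'' is exactly where it breaks. The $\bbd$ condition (Definition~\ref{booleanlogics}) constrains the antecedent of a dependence atom only relative to the \emph{existential first-order} quantifiers above it: the antecedent must contain $V$ at the position just inside the innermost enclosing $\exists x$. It says nothing about the \emph{universally} quantified variables of the atom's own block $\exists\vec{x}\,\forall\vec{y}\,\exists\vec{\alpha}$, and the whole point of such an atom is that its antecedent may omit some of $\vec{y}$. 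So your claim that every in-body atom $\dep[\vec{z},\alpha_i]$ satisfies $\vec{z}\supseteq V(\text{its position})\supseteq\vec{w}$ is false; in general $\vec{z}\subsetneq\vec{w}$, the original atom is strictly \emph{stronger} than your canonical one, and deleting it weakens the sentence, so the implication from the transformed sentence back to the original fails. This is not a peculiarity of the outermost block, where you do notice the problem and propose splitting $\alpha_i$ into fresh Booleans; it occurs in every block. Your sketch of the converse direction cannot repair this either, because no rechoosing of Boolean choice functions will make the transformed sentence enforce a constraint that is simply no longer present in it.

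The paper's proof keeps exactly this information by a different device. An offending occurrence of $\dep[\vec{x},\alpha]$, whose antecedent is $V([\forall\vec{z}\,\exists\vec{\beta}\,\psi,\varphi_i])\cup U$ with $U$ a possibly proper subset of the block's universals $\vec{z}$, is \emph{replaced} by the first-order implication $\vec{u}=\vec{w}\rightarrow\alpha=\beta'$, where $\vec{w}$ is a tuple of fresh universally quantified variables of the same length as $\vec{u}$ and $\beta'$ is a fresh Boolean variable carrying the canonical atom $\dep[{V([\forall \vec{z}\,\exists\vec{\beta}\, \psi, \varphi_i])\cup W,\beta'}]$ in the correct position. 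On the diagonal $\vec{w}=\vec{u}$ the new atom reproduces precisely the original partial dependence of $\alpha$ on $V\cup U$, so nothing is lost, and since one fresh Boolean variable is spent per offending occurrence, condition~2 of Definition~\ref{depnor} comes for free. Some such mechanism for faithfully encoding an atom whose antecedent omits universals of its own block is the essential idea missing from your proposal.
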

\begin{proof}
Let $\varphi\in\bbd$ be a sentence. By Proposition \ref{Qnormalform}, we may assume that $\varphi$ is in $Q$-normal form. We will give an algorithm that transforms $\varphi$ to an equivalent $\bbd$-sentence $\varphi^+$ in dependence normal form. We show that there exists
a natural number $n\in\mathbb{N}$ and
a tuple $(\varphi_i)_{i\leq n}$ of equivalent $\bbd$-sentences in $Q$-normal form such that $\varphi_0=\varphi$ and $\varphi_n=\varphi^+$.
The sentence $\varphi_{i+1}$ is obtained from $\varphi_i$ by the procedure described below.

Assume that $\varphi_i$ is not in dependence normal form. Assume first that this is due to the fact that there exists an occurrence
\[
[\dep[\vec{x},\alpha],\varphi_i]_t
\]
of some dependence atom $\dep[\vec{x},\alpha]$ in $\varphi_i$ that violates the conditions of Definition \ref{depnor}, i.e., there exists some other occurrence of a dependence atom with a consequent $\alpha$ or $[\dep[\vec{x},\alpha],\varphi_i]_t$ is not a conjunct in a conjunction of dependence atoms immediately following a block of existentially quantified Boolean variables in which $\exists \alpha$ occurs. 
Hence, there exists a formula $\psi$ and maximal quantifier blocks $\exists\vec{y}$, $\forall \vec{z}$ and $\exists\vec{\beta}$ such that $\exists\vec{\beta}$ is nonempty and
\[
\vartheta \dfn \exists\vec{y}\,\forall \vec{z}\,\exists\vec{\beta}\, \psi
\]
is a subformula of $\varphi_i$, where the occurrence $[\dep[\vec{x},\alpha],\varphi_i]_t$ of $\dep[\vec{x},\alpha]$ is a subformula of $\psi$ and is not bound by any quantifier in $\psi$.
%
%
Let $U$ denote the set of variables that are in both $\vec{x}$ and
$\vec{z}$.
By $\vec{u}$ we denote the canonical ordering of the variables in $U$.
Since $\varphi_i$ is a $\bbd$-sentence, the variables in $\vec{x}$ are exactly those that are in $V([\forall \vec{z}\,\exists\vec{\beta}\, \psi,\varphi_i])\cup U$. Hence the formulae 
\[
\dep[{V([\forall \vec{z}\,\exists\vec{\beta}\, \psi, \varphi_i])\cup U,\alpha}] \quad\text{ and }\quad \dep[\vec{x},\alpha]
\]
are equivalent. Therefore, due to Lemma \ref{substitution_bd}, we may assume that $\dep[\vec{x},\alpha]$ is $\dep[{V([\forall \vec{z}\,\exists\vec{\beta}\, \psi, \varphi_i])\cup U,\alpha}]$.

Let $\vec{w}$ be a tuple of fresh distinct first-order variables of the same length as $\vec{u}$,
$W$ the set of variables in $\vec{w}$ and
$\beta'$ a fresh Boolean variable. Define then that
\[
\vartheta':=\exists \vec{y}\,\forall \vec{z}\,\forall \vec{w}\,\exists\vec{\beta}\,\exists\beta'\,\big(\dep[{V([\forall \vec{z}\,\exists\vec{\beta}\, \psi, \varphi_i])\cup W,\beta'}]\wedge\psi'\big),
\]
where
$\psi'$ is obtained from $\psi$
by substituting the occurrence $[\dep[\vec{x},\alpha],\varphi_i]_t$ of $\dep[\vec{x},\alpha]$ by $\vec{u}=\vec{w}\rightarrow \alpha=\beta'$.
%
%
%
We will show that the formulae $\vartheta$ and $\vartheta'$ are equivalent. First observe that, since the variables $\vec{w},\beta'$ do not occur in $\psi$, it is easy to conclude, by Proposition \ref{prop:teamrestriction}, that $\vartheta$ is equivalent to the formula
\[
\gamma:=\exists \vec{y}\,\forall \vec{z}\,\forall \vec{w}\,\exists\vec{\beta}\,\exists\beta'\,\big(\dep[{V([\forall \vec{z}\,\exists\vec{\beta}\, \psi, \varphi_i])\cup W,\beta'}]\wedge\psi\big).
\]
We still need to show that $\gamma$ is equivalent to $\vartheta'$. 
Note that $\vartheta'$ can be obtained from $\gamma$ by substituting one occurrence of $\dep[\vec{x},\alpha]$ by $\vec{u}=\vec{w}\rightarrow \alpha=\beta'$.
Notice also that
\begin{align}
\mA&\models_{Z} \dep[{V([\forall \vec{z}\,\exists\vec{\beta}\, \psi, \varphi_i])\cup W,\beta'}]\wedge\psi, \label{middle}\\ 
\intertext{if and only if}
\mA&\models_{Z'} \dep[{V([\forall \vec{z}\,\exists\vec{\beta}\, \psi, \varphi_i])\cup W,\beta'}]\wedge\psi \label{end},
\end{align}
where $\mA$ is a model, and $Z$ and $Z'$ are teams of $\mA$ such that
\begin{align*}
&Z\upharpoonright \fr(\psi)=Z'\upharpoonright \fr(\psi),\\
&\mA\models_{Z} \dep[{V([\forall \vec{z}\,\exists\vec{\beta}\, \psi, \varphi_i])\cup W,\beta'}], \text{ and}\\
&\mA\models_{Z'} \dep[{V([\forall \vec{z}\,\exists\vec{\beta}\, \psi, \varphi_i])\cup W,\beta'}].
\end{align*}
%
%
%
%
%
%
Therefore and since $\lvert \vec{w}\rvert=\lvert\vec{u}\rvert$, we can encode a partial function related to the occurrence $[\dep[\vec{x},\alpha],\varphi_i]_t$ of the dependence atom $\dep[\vec{x},\alpha]$ using $\vec{w}$ and $\beta'$.

Assume first that $\mA\models_X \gamma$. Hence
\[
\mA\models_Y \dep[{V([\forall \vec{z}\,\exists\vec{\beta}\, \psi, \varphi_i])\cup W,\beta'}]\wedge\psi,
\]
for some team $Y$ that can be obtained from $X$ by evaluating the quantifier prefix of $\gamma$.
Therefore, by Theorem \ref{suceva}, there exists some successful evaluation $e$ of
\[
\dep[{V([\forall \vec{z}\,\exists\vec{\beta}\, \psi, \varphi_i])\cup W,\beta'}]\wedge\psi
\]
on the model $\mA$ and team $Y$. Hence, by Proposition \ref{sucevap},
\[
\mA\models_{e\big([\dep[\vec{x},\alpha],\varphi_i]_t\big)}\dep[\vec{x},\alpha],
\]
i.e.,
\[
\mA\models_{e\big([\dep[\vec{x},\alpha],\varphi_i]_t\big)} \dep[{V([\forall \vec{z}\,\exists\vec{\beta}\, \psi, \varphi_i])\cup U,\alpha}].
\]
Therefore, there exists a partial function
\[
f_e:A^{\lvert V([\forall \vec{z}\,\exists\vec{\beta}\, \psi, \varphi_i])\cup U \rvert}\rightarrow \{\bot,\top\}
\]
that maps the values of the variables of $V([\forall \vec{z}\,\exists\vec{\beta}\, \psi, \varphi_i])\cup U$ to the value of $\alpha$ in the team $e\big([\dep[\vec{x},\alpha],\varphi_i]_t)$. Remember that $\lvert U\rvert = \lvert W\rvert$. Let
\[
g_e:A^{\lvert V([\forall \vec{z}\,\exists\vec{\beta}\, \psi, \varphi_i])\cup W \rvert}\rightarrow\{\bot,\top\}
\]
be a function such that $f_e\subseteq g_e$, and let $Y'$ be the variant of $Y$ for which the values for $\beta'$ have been picked by using the function $g_e$, i.e., such that $Y\upharpoonright\fr(\psi)=Y'\upharpoonright\fr(\psi)$ and such that, for every $s\in Y'$,
\[
s(\beta')=g_e\big(s(V([\forall \vec{z}\,\exists\vec{\beta}\, \psi, \varphi_i])\cup W \rvert)\big).
\]
Hence the equivalence of \eqref{middle} and \eqref{end} can be applied here.
Therefore and since $\mA\models_{Y} \dep[{V([\forall \vec{z}\,\exists\vec{\beta}\, \psi, \varphi_i])\cup W,\beta'}]$, we conclude that
\[
\mA\models_{Y'} \dep[{V([\forall \vec{z}\,\exists\vec{\beta}\, \psi, \varphi_i])\cup W,\beta'}]\wedge\psi.
\]
Now since $Y\upharpoonright\fr(\psi)=Y'\upharpoonright\fr(\psi)$, it follows from Lemma \ref{prop:teamrestriction} and Theorem \ref{suceva} that there exists some successful evaluation $e'$ of
\[
\dep[{V([\forall \vec{z}\,\exists\vec{\beta}\, \psi, \varphi_i])\cup W,\beta'}]\wedge\psi
\]
on the model $\mA$ and team $Y'$ such that
\[
e'\big([\dep[\vec{x},\alpha],\varphi]_t\big)\upharpoonright\fr(\psi)\,=\, e\big([\dep[\vec{x},\alpha],\varphi]_t\big)\upharpoonright\fr(\psi).
\]
By Proposition \ref{sucevap},
\[
\mA\models_{e'\big([\dep[\vec{x},\alpha],\varphi]_t\big)} \, \dep[\vec{x},\alpha],
\]
i.e.,
\[
\mA\models_{e'\big([\dep[\vec{x},\alpha],\varphi]_t\big)} \, \dep[{V([\forall \vec{z}\,\exists\vec{\beta}\, \psi, \varphi_i])\cup U,\alpha}].
\]
Therefore and since the values for $\beta'$ in $Y'$ were picked by applying the expansion $g_e$ of $f_e$ to the values of $V([\forall \vec{z}\,\exists\vec{\beta}\, \psi, \varphi_i])$ and $\vec{w}$, it follows that the values of $\beta'$ in $e'\big([\dep[\vec{x},\alpha],\varphi]_t\big)$ can be obtained by applying the expansion $g_e$ of $f_e$ to the values of $V([\forall \vec{z}\,\exists\vec{\beta}\, \psi, \varphi_i])$ and $\vec{w}$. Thus
\[
\mA\models_{e'\big([\dep[\vec{x},\alpha],\varphi]_t\big)} \vec{w}=\vec{u}\rightarrow \alpha=\beta'.
\]
Hence, we conclude that there exists a successful evaluation of
\[
\dep[{V([\forall \vec{z}\,\exists\vec{\beta}\, \psi, \varphi_i])\cup W,\beta'}]\wedge\psi'
\]
on the model $\mA$ and team $Y'$. Therefore, by Theorem \ref{suceva},
\[
\mA\models_{Y'} \dep[{V([\forall \vec{z}\,\exists\vec{\beta}\, \psi, \varphi_i])\cup W,\beta'}]\wedge\psi'.
\]
Clearly $Y'$ can be obtained form $X$ by evaluating the quantifier prefix of $\vartheta'$. Hence
\[
\mA\models_X \vartheta'.
\]

Assume then that $\mA\models_X \vartheta'$. Hence
\[
\mA\models_Y \dep[{V([\forall \vec{z}\,\exists\vec{\beta}\, \psi, \varphi_i])\cup W,\beta'}]\wedge\psi',
\]
for some team $Y$ that can be obtained from $X$ by evaluating the quantifier prefix of $\vartheta'$. Thus, there exists some successful evaluation $h$ of $\psi'$ on the model $\mA$ and team $Y$. Hence
\[
\mA\models_{h([\vec{w}=\vec{u}\rightarrow \alpha=\beta',\vartheta'])} \vec{w}=\vec{u}\rightarrow \alpha=\beta'.
\]
Since the variables in $\vec{w}$ and $\beta'$ do not occur in other subformulae of $\psi'$ other than $\vec{w}=\vec{u}\rightarrow \alpha=\beta'$, we may assume, by Proposition \ref{prop:teamrestriction}, that for each assignment $s\in h([\vec{w}=\vec{u}\rightarrow \alpha=\beta',\vartheta'])$ and $\vec{a}\in A^{\lvert\vec{w} \rvert}$ the modified assignment $s'\in Y$ of $s$ that maps $\vec{w}$ to $\vec{a}$ and $\beta'$ to $\bot$ or $\top$ is also in $h([\vec{w}=\vec{u}\rightarrow \alpha=\beta',\vartheta'])$. Now since
\begin{align*}
\mA&\models_Y\dep[{V([\forall \vec{z}\,\exists\vec{\beta}\, \psi, \varphi_i])\cup W,\beta'}],\\
\mA&\models_{h([\vec{w}=\vec{u}\rightarrow \alpha=\beta',\vartheta'])} \vec{w}=\vec{u}\rightarrow \alpha=\beta',
\end{align*}
and $h([\vec{w}=\vec{u}\rightarrow \alpha=\beta',\vartheta'])\subseteq Y$, we conclude that
\[
\mA\models_{h([\vec{w}=\vec{u}\rightarrow \alpha=\beta',\vartheta'])}\dep[{V([\forall \vec{z}\,\exists\vec{\beta}\, \psi, \varphi_i])\cup U,\alpha}].
\]
Remember that $\psi$ can be obtained from $\psi'$ by substituting $\vec{w}=\vec{u}\rightarrow \alpha=\beta'$ with $\dep[{V([\forall \vec{z}\,\exists\vec{\beta}\, \psi, \varphi_i])\cup U,\alpha}]$. Thus, $h$ can be modified into a successful evaluation of
\(
\psi
\)
on the model $\mA$ and team $Y$. Therefore, by Theorem \ref{suceva},
\[
\mA\models_Y \psi.
\]
Since $\mA\models_Y \dep[{V([\forall \vec{z}\,\exists\vec{\beta}\, \psi, \varphi_i])\cup W,\beta'}]$ and $Y$ can clearly be obtained from $X$ by evaluating the quantifier prefix of $\gamma$, we conclude that $\mA\models_X \gamma$. Thus we have shown that $\gamma$ and $\vartheta'$ are equivalent. Since $\gamma$ and $\vartheta$ are equivalent, we can finally conclude that $\vartheta$ and $\vartheta'$ are equivalent.

Let $\varphi_{i+1}$ be the sentence obtained from $\varphi_i$ by substituting $\vartheta$ with $\vartheta'$. Since $\vartheta$ and $\vartheta'$ are equivalent, it follows from Lemma $\ref{substitution_bd}$ that $\varphi_i$ and $\varphi_{i+1}$ are equivalent. Notice that, if $\varphi_i$ is in $Q$-normal form, then $\varphi_{i+1}$ is also in $Q$-normal form.
Furthermore, in $\varphi_{i+1}$ there is strictly less\footnote{To be precise, to achieve this we may need to reorder the conjunction in which $[\dep[\vec{x},\alpha],\varphi_i]_t$ was a conjunct of.} occurrences of dependence atoms that violate the condition of Definition \ref{depnor} than in $\varphi_{i}$.
Hence
for large enough
$k$, the formula $\varphi_k$ does not have any dependence atoms that violate the conditions of Definition \ref{depnor}. Hence, if $\varphi_k$ is not in dependence normal form there exists a subformula
\[
\exists\beta\,\exists\vec{\alpha}\,\psi
\]
of $\varphi_k$ such that $\vv{\alpha}$ is maximal and such that $\dep[\vec{x},\beta]$ is not a subformula of $\psi$ for any $\vec{x}$. Let $\varphi_{k+1}$ denote the formula obtained from $\varphi_k$ by substituting
\[
\exists\beta\,\exists\vec{\alpha}\,\psi \quad\text{ by }\quad \exists\beta\,\exists\vec{\alpha}\, \big(\dep[{V([\exists\beta\,\exists\vec{\alpha}\,\psi, \varphi_k]),\beta}]\wedge\psi\big).
\]
Clearly $\varphi_k$ and $\varphi_{k+1}$ are equivalent. It is easy to see that the procedure described here terminates and finally produces an equivalent sentence in dependence normal form.
\end{proof}

%
\section{Fragments of $\fopoc$ and $\bd$ coincide}\label{equiv}
In this section we use the normal form for bounded Boolean dependence logic from Section \ref{normalform} to establish that
\[
\bbd\equiv\fopocp,\, \rbd\equiv\pocfo\, \text{ and }\, \abd\equiv\pocqf.
\]
In addition, we show that $\bd\equiv \D$.

\begin{definition}
Let $\varphi\in\bbd$ be a sentence in dependence normal form. We say that 
a subformula $\psi$ of $\varphi$ is \emph{dependence maximal} (with respect to $\varphi$) if either
$\psi$ does not contain any dependence atoms, or
\[
	\psi = \forall \vec{x}\,\exists\vec{\alpha}\,\vartheta,
\]
where $\vec{\alpha}$ is nonempty and neither $\forall y \psi$ nor $\exists \beta\exists\vec{\alpha}\,\vartheta$ is a subformula of $\varphi$, for any $y$ or $\beta$.
\end{definition}

\begin{theorem}\label{bbd-fopocp}
$\bbd \equiv \fopocp$.
\end{theorem}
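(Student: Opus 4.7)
The plan is to prove both inclusions separately. For $\fopocp\leq\bbd$ I will define a direct compositional translation; for $\bbd\leq\fopocp$ I will exploit the dependence normal form established in Proposition~\ref{dep_nf}.

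For $\fopocp\leq\bbd$ I would recursively translate an $\fopocp$-formula $\varphi$ into a $\bbd$-formula $\varphi^\dagger$, parametrised by the set $V$ of first-order variables currently in scope. Atomic formulae, Boolean literals, propositional connectives and first-order quantifiers are translated homomorphically, updating $V$ in the obvious way. The critical case is the partially-ordered connective. For $N_\pi\vec{x}_1\alpha_1\ldots\vec{x}_m\alpha_m\,\psi$ with pattern $\pi=(n_1,\ldots,n_m,E)$ I first rename the entries of each $\vec{x}_i$ to a fresh tuple $\vec{y}_i$ of pairwise distinct variables, rewrite $\psi$ accordingly, and output
\[
\forall\vec{y}_1\cdots\forall\vec{y}_m\,\exists\alpha_1\cdots\exists\alpha_m\,\Bigl(\bigwedge_{i=1}^{m}\dep[V\cup\vec{y}_i,\alpha_i]\wedge\Bigl(\bigwedge_{(i,j,k,l)\in E}y_{ij}=y_{kl}\to\psi^\dagger\Bigr)\Bigr).
\]
Including $V$ in every antecedent is precisely what forces the result to satisfy the $\bbd$ condition below any outer existential first-order quantifier. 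Semantic equivalence then reduces to the observation that the POC's Skolem functions $f_i\colon A^{n_i}\to\{\bot,\top\}$ correspond exactly to the team-semantic witnesses of the atoms $\dep[V\cup\vec{y}_i,\alpha_i]$: once the values of $V$ are fixed across the team, a $V\cup\vec{y}_i$-determined Boolean function uncurries to a family of functions of $\vec{y}_i$ alone, parametrised by the values of $V$, which is exactly what the POC needs.

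For $\bbd\leq\fopocp$ I would first put the given $\bbd$-sentence into dependence normal form via Proposition~\ref{dep_nf}, and then recursively translate $\varphi\mapsto\varphi^\circ$. Literals, propositional connectives and first-order quantifiers are translated homomorphically. The essential clause handles each dependence maximal subformula, which by definition has the shape $\forall\vec{x}\,\exists\vec{\alpha}\bigl(\bigwedge_{i=1}^{m}\dep[\vec{x}_i,\alpha_i]\wedge\vartheta\bigr)$; this is translated into the positive partially-ordered connective $N_\pi\vec{x}_1\alpha_1\ldots\vec{x}_m\alpha_m\,\vartheta^\circ$, where $\pi$ is the pattern capturing the identities among the tuples $\vec{x}_1,\ldots,\vec{x}_m$. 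For inner dependence maximal blocks the $\bbd$ condition guarantees that every antecedent contains all outer first-order variables currently in scope; under Tarski semantics that outer context is absorbed automatically, since the Skolem functions chosen for a POC below a first-order quantifier already depend on the current assignment. The equivalence is then verified using Theorem~\ref{suceva}: a successful evaluation of the $\bbd$-subformula on a team yields exactly the tuple of POC Skolem functions, and conversely the POC witnesses can be read back as a successful evaluation.

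The main obstacle I anticipate is the bookkeeping: in the first direction the translation of a POC must simultaneously rename to avoid variable clashes, faithfully encode the pattern $\pi$ via explicit equality guards, and certify the $\bbd$ condition at every nested Boolean dependence atom; in the second direction one must verify that the dependence maximal decomposition truly yields well-formed POCs with the correct pattern and with the outer first-order context cleanly separated from the bound universal rows. Once the translations and their syntactic properties are correctly set up, the semantic verifications become routine inductions whose critical step is the observation that a Boolean dependence atom in a team where its universal variables are fully realised is, up to Skolemisation, exactly a row of a partially-ordered connective.
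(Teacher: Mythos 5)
Your overall strategy is the paper's: put the $\bbd$-sentence into dependence normal form (Proposition~\ref{dep_nf}) and exchange each dependence-maximal block $\forall \vec{x}\,\exists\vec{\alpha}\,(\bigwedge_i\dep[\,\cdots,\alpha_i]\wedge\vartheta)$ for a partially-ordered connective, and conversely pad each atom's antecedent with the set of first-order variables in scope so that the result satisfies the $\bbd$ condition; the semantic core in both directions is exactly your ``uncurrying'' observation, which is how the paper builds the per-assignment functions $f^s_i$ from a $W$-determined team function and back. Two remarks. First, in the direction $\fopocp\le\bbd$ you rename the rows apart and guard the matrix with $\bigwedge_{(i,j,k,l)\in E}y_{ij}=y_{kl}$, whereas the paper quantifies each shared variable once and keeps the pattern implicit; your variant is sound (it is the same device the paper uses in Lemma~\ref{bpoctopoc}), but it forces you to argue that the team split for the implication puts every pattern-consistent assignment into the consequent's team, which the paper avoids. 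Second, there is a concrete gap in your direction $\bbd\le\fopocp$: a dependence-maximal block may universally quantify variables that occur in \emph{no} dependence atom (e.g.\ $\forall y\,\exists\alpha(\dep[\alpha]\wedge\varphi(y,\alpha))$ is in dependence normal form), and your translation $N_\pi\,\vec{x}_1\alpha_1\ldots\vec{x}_m\alpha_m\,\vartheta^\circ$ simply leaves such a $y$ unbound. The paper repairs this by adding an extra row $\vec{x}_0\,\alpha_0$ to the connective, where $\vec{x}_0$ collects exactly those variables and $\alpha_0$ is a fresh dummy Boolean variable not occurring in $\vartheta^*$. Note that the naive alternative of prefixing $\forall\vec{x}_0$ \emph{outside} the connective fails: when reading the Tarskian witnesses back into a team, the remaining rows' Skolem functions would then be allowed to depend on $\vec{x}_0$, which the dependence atoms $\dep[{V([\psi,\varphi])\cup\{\vec{x}_i\},\alpha_i}]$ forbid. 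With that one repair your argument goes through as in the paper.
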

\begin{proof}
We will first prove that $\bbd\leq\fopocp$. Let $\varphi$ be a $\bbd$-sentence. By Proposition~\ref{dep_nf} we may assume that $\varphi$ is in dependence normal form. We will translate $\varphi$ into an equivalent $\fopocp$ sentence $\varphi^*$ by substituting each maximal block $\forall\vec{x}\,\exists\vec{\alpha}$ of quantifiers along with the corresponding dependence atoms in $\varphi$ by a partially-ordered connective.

More precisely, we define a translation $\psi\mapsto\psi^*$ for all subformulae $\psi$ of $\varphi$ that
are dependence maximal or can be obtained from dependence maximal subformulae of $\varphi$ by first-order operations, i.e., by taking conjunctions, disjunctions and first-order quantifications. Note that, every $\bbd$-sentence that is in dependence normal form can be build from its dependence maximal subformulae by using only first-order operations. 
The translation is defined recursively as follows: 
\begin{enumerate}
\item[(i)] If $\psi$ is a formula without dependence atoms then
$\psi^*:=\psi$.
\item[(ii)] If 
\[
	\psi=\forall \vec{x}\, \exists \vec{\alpha}\, 
	\Big(\big(\bigwedge_{1\leq i\leq m} \dep[{V([\psi,\varphi])\cup
	\{\vec{x}_i\},\alpha_i}]\big)\wedge  \vartheta \Big)
\]
is dependence maximal and $\vec{\alpha}=(\alpha_1,\dots,\alpha_m)$, we define that
%
\begin{equation}\label{eq:partial}
\psi^*\dfn N_\pi\,\vec{x}_0\alpha_0\vec{x}_1\alpha_1
\ldots\vec{x}_m\alpha_m\, \vartheta^*,
\end{equation}
where $\vec{x}_0$ are exactly those variables in $\vec{x}$ that are not in any $\vec{x}_i$, 
$1\leq i\leq m$, and $\alpha_0$ is a fresh Boolean variable not occurring in $ \vartheta^*$ nor $\varphi$. The pattern $\pi$ of the connective is obtained canonically form the identities between the variables in the tuples $\vec{x}_i$, $i\leq m$.
\item[(iii)] If $\psi= (\vartheta\land\eta)$, we define that 
$\psi^*\dfn  (\vartheta^*\land\eta^*)$.
\item[(iv)] If $\psi= (\vartheta\lor\eta)$, we define that 
$\psi^*\dfn  (\vartheta^*\lor\eta^*)$.
\item[(v)] If $\psi= \exists x\,\eta$, we define that $\psi^*\dfn \exists x \,\eta^*$.
\item[(vi)] If $\psi= \forall x\,\eta$ and $\psi$ is not dependence maximal, we define that $\psi^*\dfn \forall x\, \eta^*$.
\end{enumerate}
Note that since $\varphi$ is in dependence normal form, $\varphi^*$ is defined, and 
clearly $\varphi^*\in\fopocp$.
Thus, it suffices to show that for every formula $\psi$ that can be obtained from dependence maximal subformulae of $\varphi$ by using conjunctions, disjunctions and first-order quantifications, for every model $\mA$ and for every team $X$ of $\mA$ such that $\fr(\psi)\subseteq \dom(X)$
\[
	\mA\models_X\psi\quad\Leftrightarrow\quad\mA,s\models\psi^* \text{ for all $s\in X$.}
\]


The proof is done by induction on the definition of the translation.

(i) If $\psi$ is without dependence atoms,
the claim holds by Proposition \ref{FO extensionbd}.

(ii) Assume that
\[
	\psi= \forall \vec{x}\, \exists \vec{\alpha}\, 
	\Big(\big(\bigwedge_{1\leq i\leq m} \dep[{V([\psi,\varphi])\cup
	\{\vec{x}_i\},\alpha_i}]\big)\wedge  \vartheta \Big)
\]
is dependence maximal and that $\vec{\alpha}=(\alpha_1,\dots,\alpha_m)$.
Then
\[
\psi^*= N_\pi\,\vec{x}_0\alpha_0\vec{x}_1\alpha_1
\ldots\vec{x}_m\alpha_m\, \vartheta^*,
\]
where $\pi$ is the pattern determined by the tuple $\vec{x}_0,\vec{x}_1,\ldots,\vec{x}_m$. 
We will show that
\[
\mA\models_X\psi\quad\Leftrightarrow\quad\mA,s\models N_\pi\,\vec{x}_0\alpha_0\vec{x}_1\alpha_1
\ldots\vec{x}_m\alpha_m\, \vartheta^* \text{ for all $s\in X$,}
\]
for every model $\mA$ and every team $X$ of $\mA$ such that $\fr(\psi)\subseteq \dom(X)$.

Let $\mA$ be a model and $X$ a team of $\mA$ such that $\fr(\psi)\subseteq \dom(X)$.
Furthermore, define that $n\dfn|\vec{x}|$ and that $n_i\dfn|\vec{x}_i|$, for each $i\le m$. 
Now, by the semantics of the quantifiers,
\[
\mA\models_X\psi
\]
if and only if for each $i$, $1\leq i\leq m$, there exists a function 
\[
F_i:X(A^n/\vec{x},F_1/\alpha_1,\dots,F_{i-1}/\alpha_{i-1})\to \{\bot,\top\}
\]
such that
\begin{equation}\label{trans-eq}
	\mA\models_Y \Big(\bigwedge_{1\leq i\leq m} 
	\dep[{V([\psi,\varphi])\cup\{\vec{x}_i\},\alpha_i}]\Big)\wedge  \vartheta,
\end{equation}
where $Y=X(A^n/\vec{x},\vec{F}/\vec{\alpha})$.
For each $i$, $1\leq i\leq m$, we define
\[
G_i:X(A^n/\vec{x})\to \{\bot,\top\}
\]
to be the unique function obtained from $F_i$ such that, for every $s\in \dom(F_i)$,
\[
G_i\big(s\upharpoonright \dom(G_i)\big) = F_i(s).
\]
Assume first that (\ref{trans-eq}) holds. For each $s\in X$ and $1\leq i \leq m$, let
\[
f^s_i: A^{n_i}\to\{\bot,\top\}
\]
denote the function such that 
\[
f^s_i(\vec{a}_i)=G_i\big(s(\vec{a}/\vec{x})\big),
\]
where $\vec{a}_i$ is the restriction of $\vec{a}$ to the variables $\vec{x}_i$.
Note that $f^s_i$ is well-defined, since by the first conjunct of (\ref{trans-eq}), the function $F_i$ and hence the function $G_i$ is 
$V([\psi,\varphi])\cup\{\vec{x}_i\}$-determined.
Furthermore, we define that
\[
f^s_0(\vec{a}_0):=\top,
\]
for every $s\in X$ and $\vec{a}_0\in A^{n_0}$. 
If $\vec{a}_i\in A^{n_i}$, $i\leq m$, are tuples such that
$\vec{a}_0\,\vec{a}_1\ldots\vec{a}_m$
is of pattern $\pi$ then clearly, for every $s\in X$, the modified assignment
\[
	s':=s(\vec{a}_0/\vec{x}_0,\ldots,\vec{a}_m/\vec{x}_m,
	f^s_1(\vec{a}_1)/\alpha_1,\ldots, f^s_m(\vec{a}_m)/\alpha_m)
\]
is in $Y$. Now since $\mA\models_Y \vartheta$, by induction hypothesis, we have that
$\mA,s'\models \vartheta^*$, for every $s\in X$. Therefore, since $\alpha_0$ does not occur in $\vartheta^*$, we have that
\[
\mA,s'(\top/\alpha_0)\models  \vartheta^*,
\]
for every $s\in X$.
Hence, the functions
$f^s_i$, $i\le m$, are as required in the truth condition of 
$N_\pi$, and we conclude that
\[
\mA,s\models N_\pi\,\vec{x}_0\alpha_0\vec{x}_1\alpha_1
\ldots\vec{x}_m\alpha_m\, \vartheta^*,
\]
for every $s\in X$.

Assume then that
\[
\mA,s\models N_\pi\,\vec{x}_0\alpha_0\vec{x}_1\alpha_1
\ldots\vec{x}_m\alpha_m\, \vartheta^*
\]
holds for every $s\in X$.
Hence, for every $s\in X$ and $i\leq m$, there exists a function
\[
f^s_i:A^{n_i}\to\{\bot,\top\}, 
\]
such that if $\vec{a}_0,\dots,\vec{a}_m$ is of pattern $\pi$ then
\[
\mA,s(\vec{a}_0/\vec{x}_0,\ldots,\vec{a}_m/\vec{x}_m, f^s_0(\vec{a}_0)/\alpha_0,\ldots, f^s_m(\vec{a}_m)/\alpha_m)\models \vartheta^*.
\]
Since $\alpha_0$ does not occur in $\vartheta^*$ we conclude that
\[
\mA,s(\vec{a}_0/\vec{x}_0,\ldots,\vec{a}_m/\vec{x}_m, f^s_1(\vec{a}_1)/\alpha_1,\ldots, f^s_m(\vec{a}_m)/\alpha_m)\models \vartheta^*.
\]
%
%
Now, for each $i$, $1\leq i\leq m$, define the function 
\[
F_i:X(A^n/\vec{x},F_1/\alpha_1,\dots,F_{i-1}/\alpha_{i-1})\to \{\bot,\top\}
\]
by setting that
\[
F_i\Big(s\big(\vec{a}/\vec{x}, f_1^s(\vec{a}_1)/\alpha_1, \dots, f_{i-1}^s(\vec{a}_{i-1})/\alpha_{i-1}\big)\Big):=f^s_i(\vec{a}_i),
\]
where $\vec{a}_j\in A^{n_j}$ is the
restriction of $\vec{a}$ to the variables in $\vec{x}_j$, $1\leq j\leq i$.
The functions $F_i$ are obviously 
$V([\psi,\varphi])\cup\{\vec{x}_i\}$-determined and hence
\begin{equation}\label{eqdep}
	\mA\models_{X(A^n/\vec{x},\vec{F}/\vec{\alpha})} \bigwedge_{1\leq i\leq m} \dep[{V([\psi,\varphi])
	\cup\{\vec{x}_i\},\alpha_i}].
\end{equation}
Furthermore, if
$s'\in X(A^n/\vec{x},\vec{F}/\vec{\alpha})$, then in fact
\[
	s'=s(\vec{a}_0/\vec{x}_0,\ldots,\vec{a}_m/\vec{x}_m,
	f^s_1(\vec{a}_1)/\alpha_1,\ldots, f^s_m(\vec{a}_m)/\alpha_m),
\]
for some $s\in X$ and $\vec{a}_i\in A^{n_i}$, $i\leq n$, such that $\vec{a}_1,\dots,\vec{a}_n$ is of pattern $\pi$. Hence
\[
\mA,s'\models \vartheta^*,
\]
for each $s'\in X(A^n/\vec{x},\vec{F}/\vec{\alpha})$. Thus, by induction hypothesis,
\[
\mA\models_{X(A^n/\vec{x},\vec{F}/\vec{\alpha})} \vartheta.
\]
By this and \eqref{eqdep}, we conclude that \eqref{trans-eq} holds and thus that $\mA\models_X\psi$.

The cases (iii) -- (vi) are trivial.
\vspace{1mm}

For the direction $\fopocp \leq \bbd$, let $\varphi\in\fopocp$ be a sentence.
Without loss of generality, we may assume that each variable quantified in $\varphi$ is quantified exactly once.
We define recursively a translation $\psi\mapsto\psi^+$ for all subformulae $\psi$
of $\varphi$ as follows.
If $\psi$ is a literal, we define that $\psi \mapsto \psi.$ For first-order connectives and quantifiers, we define that
\begin{eqnarray*}
(\vartheta\land\eta) &\mapsto& (\vartheta^+\land\eta^+),\\
(\vartheta\lor\eta) &\mapsto& (\vartheta^+\lor\eta^+),\\
\exists x\eta &\mapsto& \exists x\eta^+,\\
\forall x\eta &\mapsto& \forall x\eta^+.
\end{eqnarray*}
Finally, if $\psi=N_\pi\,\vec{x}_1\alpha_1\ldots\vec{x}_m\alpha_m\, \vartheta$,
we define that
\[
	\psi^+:=\forall \vec{x}\,\exists\alpha_1\ldots\exists\alpha_m
	\Big( \big(\bigwedge_{1\le i\le m}\dep[{V([\psi,\varphi])\cup\{\vec{x}_i\},\alpha_i}]
	 \big)\land\vartheta^+\Big),
\]
where $\vec{x}$ is a tuple of exactly those variables that are in at least one of the tuples $\vec{x}_1,\ldots,\vec{x}_m$. Clearly $\varphi^+$ is a $\bbd$ sentence.

It is now easy to prove by induction that for every subformula $\psi$
of $\varphi$
\[
	\mA\models_X\psi^+\;\Leftrightarrow\;\mA,s\models\psi\text{ for all $s\in X$}
\]
holds for every model $\mA$ and team $X$ on $\mA$ such that $\dom(X) =\fr(\psi^+)$. The proof is completely analogous to the inductive proof of the direction $\bbd\leq\fopocp$ shown earlier.
%
%
\end{proof}
By using the same methods as in the proofs of Propositions \ref{Qnormalform}, \ref{dep_nf} and Theorem \ref{bbd-fopocp}, we obtain the following theorem.
\begin{theorem}\label{rbd-pocfo}
$\rbd \equiv \pocfo$ and $\abd \equiv \pocqf$.
\end{theorem}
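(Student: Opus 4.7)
The plan is to lift the proof of Theorem~\ref{bbd-fopocp} essentially verbatim, by verifying that the normal form procedures of Propositions~\ref{Qnormalform} and~\ref{dep_nf} preserve the fragments $\rbd$ and $\abd$, and then checking that the translations $\psi\mapsto\psi^*$ and $\psi\mapsto\psi^+$ from the proof of Theorem~\ref{bbd-fopocp} produce formulae in the correct sub-fragments of $\fopoc$ and $\bd$.

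First, I would verify that the procedure of Proposition~\ref{Qnormalform} applied to an $\rbd$-sentence (respectively $\abd$-sentence) yields an equivalent sentence in the same fragment. The swap equivalences in Lemma~\ref{quantifier_swap} only move universal first-order or existential Boolean quantifiers past propositional connectives, so they cannot bring a dependence atom into or out of the scope of an existential first-order quantifier. The $\forall$-before-$\exists\alpha$ rearrangement in Lemma~\ref{quantifier_order} introduces a new dependence atom, but the atom is placed at the level where the old $\exists\alpha$ used to sit; since in an $\rbd$-sentence no dependence atom lies inside the scope of an existential first-order quantifier, this new atom also lies only inside universal first-order quantifiers. For $\abd$ the claim is immediate because no existential first-order quantifiers are present in the first place.

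Second, I would check that the substitutions carried out in the proof of Proposition~\ref{dep_nf} respect both fragments. The new Boolean existentials, the new universal first-order quantifiers for fresh variables $\vec{w}$, and the new dependence atoms are introduced at the top of the subformula $\vartheta$ being rewritten, never underneath a fresh existential first-order quantifier. Hence the $\rbd$ condition is preserved, and the $\abd$ condition trivially so. Combining this with the outer shape $\forall\vec{x}\,\exists\vec{\alpha}\,\vartheta$ of the $Q$-normal form and the restriction condition of each fragment, one sees that an $\rbd$-sentence (resp.~$\abd$-sentence) in dependence normal form must in fact take the shape
\[
\forall\vec{x}\,\exists\vec{\alpha}\,\Big(\bigwedge_{1\le i\le m}\dep[{V([\psi,\varphi])\cup\{\vec{x}_i\},\alpha_i}]\wedge\vartheta\Big),
\]
where $\vartheta$ is a first-order formula (possibly containing Boolean variables) in the $\rbd$ case and a quantifier-free such formula in the $\abd$ case. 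Indeed, any inner block $\exists\vec{y}\,\forall\vec{z}\,\exists\vec{\beta}$ with nonempty $\vec{\beta}$ would force dependence atoms under an existential first-order quantifier, contradicting $\rbd$, and no existential first-order quantifier can occur at all in the $\abd$ case.

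Third, I would apply the two translations from the proof of Theorem~\ref{bbd-fopocp}. The translation $\psi\mapsto\psi^*$ applied to a sentence of the above shape collapses to a single partially-ordered connective $N_\pi\,\vec{x}_0\alpha_0\vec{x}_1\alpha_1\ldots\vec{x}_m\alpha_m\,\vartheta^*$, and $\vartheta^*$ is first-order in the $\rbd$ case and quantifier free in the $\abd$ case. This yields $\rbd\leq\pocfo$ and $\abd\leq\pocqf$. Conversely, the translation $\psi\mapsto\psi^+$ sends a $\pocfo$-sentence $N_\pi\,\vec{x}_1\alpha_1\ldots\vec{x}_m\alpha_m\,\varphi$ with $\varphi\in\fo$ to a $\bbd$-sentence of precisely the displayed shape with $\vartheta=\varphi$, which is patently in $\rbd$; when $\varphi$ is quantifier free the result is in $\abd$. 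The correctness of both directions follows from the same inductive argument given in the proof of Theorem~\ref{bbd-fopocp}, with no new case analysis needed.

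The only non-routine point, and the main place to be careful, is the bookkeeping in the preservation step: one must confirm that neither the $Q$-normal form procedure nor the dependence normal form procedure ever hides a dependence atom inside an existential first-order quantifier that did not contain one before. Once that is checked, the structural characterization of sentences simultaneously in $\rbd$ (resp.\ $\abd$) and in dependence normal form falls out, and the translations of Theorem~\ref{bbd-fopocp} do the remaining work unchanged.
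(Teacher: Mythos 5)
Your overall strategy is the same as the paper's (reuse the normal forms and the two translations from Theorem~\ref{bbd-fopocp}), and your treatment of $\abd\equiv\pocqf$ and of the directions $\pocfo\leq\rbd$, $\pocqf\leq\abd$ is correct. However, there is a genuine gap in your claim that the normal form procedures preserve $\rbd$. The definition of $\rbd$ only forbids \emph{dependence atoms} inside the scope of an existential first-order quantifier; it does \emph{not} forbid existential Boolean quantifiers there. So a formula such as $\forall y\,\exists\alpha\,(\dep[y,\alpha]\wedge \exists x\,\exists\beta\,(\beta\vee P(x)))$ is a legitimate $\rbd$-formula. When you run the procedures of Propositions~\ref{Qnormalform} and~\ref{dep_nf} on it, both the rearrangement of Lemma~\ref{quantifier_order} and the final step of the dependence normal form construction (which attaches a dependence atom $\dep[{V([\,\cdot\,]),\beta}]$ to every ``orphan'' Boolean quantifier) will manufacture a dependence atom at the position of $\exists\beta$, i.e.\ inside the scope of $\exists x$. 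The output is therefore no longer an $\rbd$-sentence, and your structural characterization of $\rbd$-sentences in dependence normal form does not follow. Your bookkeeping argument only tracks where \emph{pre-existing} dependence atoms sit, not where new ones get created for Boolean quantifiers that previously had none.

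The paper closes exactly this gap with a preprocessing step you are missing: for every subformula $\exists\alpha\,\psi$ that contains no Boolean dependence atoms (which is automatic for Boolean quantifiers under an existential first-order quantifier in $\rbd$), replace it by $\exists x\,\exists y\,\psi\big((x=y)/\alpha\big)$ with fresh $x,y$; this is an equivalence on models of cardinality at least $2$ and eliminates all Boolean quantification from the scope of existential first-order quantifiers before the normal form machinery is applied. With that step inserted, the rest of your argument goes through as you describe. You should also note explicitly that the restriction to models of size at least $2$ is harmless here (one-element models can be handled separately), since your proof as written never flags this side condition.
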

\begin{proof}
The inclusions $\pocfo\leq\rbd$ and $\pocqf\leq\abd$ follow directly by the translation $\varphi\mapsto \varphi^+$ defined in the proof of Theorem~\ref{bbd-fopocp}.

For the inclusion $\abd \leq \pocqf$, notice that the methods used in the proofs of Propositions \ref{Qnormalform} and \ref{dep_nf} produce, for each $\abd$-sentence $\varphi$, an equivalent $\abd$-sentence of the form
\[
\forall \vec{x}\,\exists \vec{\alpha}\, \psi,
\]
where $\psi$ is a quantifier-free formula. Now, from the translation $\varphi\mapsto \varphi^*$, defined in the proof of Theorem~\ref{bbd-fopocp}, it follows that the sentence $(\forall \vec{x}\,\exists \vec{\alpha}\, \psi)^*$ is a $\pocqf$-sentence equivalent to $\forall \vec{x}\,\exists \vec{\alpha}\, \psi$, and hence equivalent to $\varphi$.

We still need to show that $\rbd \leq \pocfo$. Recall that $\rbd$ is the syntactic fragment of $\bd$ in which there is no Boolean dependence atoms in scope of existential first-order quantifiers. We will first establish that for each $\rbd$-sentence $\varphi$ there exists an equivalent $\rbd$-sentence $\varphi^-$ in which there is no quantification of Boolean variables in the scope of existential first-order quantifiers. Without loss of generality, we consider only models of cardinality at least $2$. For each $\bd$ formula $\exists \alpha\psi$ without Boolean dependence atoms, we define that
\[
(\exists \alpha\psi)' \dfn \exists x\exists y \psi\big((x=y)/\alpha\big),
\]
where $x$ and $y$ are fresh first-order quantifiers not occurring in $\psi$ and the formula $\psi\big((x=y)/\alpha\big)$ is the formula obtained from $\psi$ by substituting each free occurrence of $\alpha$ in $\psi$ by $x=y$. Clearly, the formulae $\exists \alpha\psi$ and $(\exists \alpha\psi)'$ are equivalent in the class of all structures of cardinality at least $2$. Hence, by Theorem \ref{substitution_bd}, we conclude that for each $\rbd$-sentence $\varphi$ there exists an equivalent $\rbd$-sentence $\varphi^-$ in which there is no quantification of Boolean variables in the scope of existential first-order quantifiers.
When the procedures used in the proofs of Propositions \ref{Qnormalform} and \ref{dep_nf} are applied to $\varphi^-$, an equivalent sentence of the form
\[
\forall \vec{x}\,\exists \vec{\alpha}\, \psi,
\]
where $\psi$ is a first-order formula, is obtained. Now from the translation $\varphi\mapsto \varphi^*$ defined in the proof of Theorem~\ref{bbd-fopocp}, it follows that the sentence $(\forall \vec{x}\,\exists \vec{\alpha}\, \psi)^*$ is an $\pocfo$-sentence equivalent to $\varphi^-$ and hence equivalent to $\varphi$.
\end{proof}
\begin{corollary}\label{bbd-0-1-law}
$\bbd$, $\rbd$ and $\abd$ have the zero-one law.
\end{corollary}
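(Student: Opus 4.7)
The plan is very short because all the hard work has already been done. By Theorem~\ref{bbd-fopocp} we have $\bbd\equiv\fopocp$, and by Theorem~\ref{rbd-pocfo} we have $\rbd\equiv\pocfo$ and $\abd\equiv\pocqf$. Meanwhile, Corollary~\ref{0-1-law} tells us that $\fopocp$, $\pocfo$ and $\pocqf$ all have the zero-one law. Since the zero-one law is a purely semantic property (it only refers to the class of models that satisfy a sentence), it is preserved under semantic equivalence of logics on sentences.

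So my proof would consist of a single observation: for any $\bbd$-sentence $\varphi$ in a relational vocabulary $\tau$, by Theorem~\ref{bbd-fopocp} there is a $\fopocp$-sentence $\varphi^*$ that is equivalent to $\varphi$, in particular $\mA\models\varphi$ iff $\mA\models\varphi^*$ for every $\tau$-structure $\mA$. Therefore the sets
\[
\{\mA\in\Str_n(\tau)\mid \mA\models\varphi\}\quad\text{and}\quad \{\mA\in\Str_n(\tau)\mid \mA\models\varphi^*\}
\]
coincide for every $n$, hence $\mu(\varphi)=\mu(\varphi^*)$, and the latter exists and equals $0$ or $1$ by Corollary~\ref{0-1-law}. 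The same argument with Theorem~\ref{rbd-pocfo} in place of Theorem~\ref{bbd-fopocp} handles $\rbd$ and $\abd$.

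There is no real obstacle here; the only thing worth flagging explicitly in the write-up is that the equivalences in Theorems~\ref{bbd-fopocp} and~\ref{rbd-pocfo} are asserted on sentences (as one can see from their proofs, which run through sentences in dependence normal form), which is exactly the level at which the zero-one law is defined, so the transfer of the property is immediate and requires no additional uniformity.
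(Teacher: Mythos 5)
Your proposal is correct and follows essentially the same route as the paper: transfer the zero-one law across the expressive equivalences of Section~\ref{equiv} from Corollary~\ref{0-1-law}. The only cosmetic difference is that the paper derives the claim for $\rbd$ and $\abd$ by noting they are syntactic fragments of $\bbd$, whereas you invoke their separate equivalences with $\pocfo$ and $\pocqf$; both are immediate.
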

\begin{proof}
By Theorem \ref{0-1-law}, $\fopocp$ has the zero-one law. Therefore, by Theorem \ref{bbd-fopocp} $\bbd$ has the zero-one law. Hence the fragments $\rbd$ and $\abd$ of $\bbd$ have the zero-one law.
\end{proof}
%
%
%
%
\begin{theorem}\label{bd-d}
$\bd \equiv \df$.
\end{theorem}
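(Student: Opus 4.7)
The plan is to establish $\df \leq \bd$, since $\bd \leq \df$ is already proven in Proposition~\ref{logic-inclusions}. The strategy is to simulate each first-order dependence atom $\dep[\vec{x}, y]$ of $\df$ by an equivalent $\bd$-formula with the same free variables. Lemma~\ref{substitution_bd} then allows me to replace every such atom in a given $\df$-sentence by its $\bd$-translation, while Lemma~\ref{change_variables} can be invoked beforehand so that the auxiliary fresh variables introduced by the translation do not clash anywhere in the sentence.

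For the atom-level simulation I introduce one fresh first-order variable $z$ and one fresh Boolean variable $\alpha$, and define
\[
	\eta(\vec{x},y) \dfn \forall z\, \exists \alpha\, \Bigl( \dep[\vec{x}, z, \alpha] \wedge \bigl( (z = y \wedge \alpha) \vee (\neg\, z = y \wedge \neg \alpha) \bigr) \Bigr).
\]
Intuitively, $\alpha$ tracks the Boolean value of the condition ``$z = y$'', while $\dep[\vec{x}, z, \alpha]$ forces this value to depend only on $\vec{x}$ and $z$; this captures exactly the content of $\dep[\vec{x}, y]$, since requiring that the characteristic function of the graph of $\vec{x}\mapsto y$ be determined by $\vec{x}$ and $z$ is equivalent to requiring that $y$ be a function of $\vec{x}$.

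To verify $\dep[\vec{x}, y] \equiv \eta(\vec{x}, y)$ on an arbitrary team $X$ with $\vec{x},y \in \dom(X)$: for $(\Rightarrow)$, on the expanded team $X(A/z)$ I pick $F(s) = \top$ iff $s(z) = s(y)$, which respects $\dep[\vec{x}, z, \alpha]$ by $\dep[\vec{x}, y]$, and the split $\{s : s(z) = s(y)\} \cup \{s : s(z) \neq s(y)\}$ witnesses the disjunction. For $(\Leftarrow)$, any valid split of the disjunction forces every assignment in the expanded team to satisfy $s(\alpha) = \top$ iff $s(z) = s(y)$; hence $F(s_0(a/z)) = \top$ iff $a = s_0(y)$ for each $s_0 \in X$, and then $\dep[\vec{x}, z, \alpha]$ gives $F(s_0(a/z)) = F(t_0(a/z))$ for all $a \in A$ whenever $s_0, t_0 \in X$ satisfy $s_0(\vec{x}) = t_0(\vec{x})$, so $\{s_0(y)\} = \{t_0(y)\}$ and therefore $s_0(y) = t_0(y)$. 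The main obstacle is just crystallising this atom-level equivalence; everything else is routine substitution through the inductive syntactic structure.
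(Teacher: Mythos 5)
Your proof is correct, and it takes a genuinely different route from the paper's. The paper first puts the given \df-sentence into the normal form $\forall\vec{x}\,\exists\vec{y}\,(\bigwedge_i\dep[\vec{x}_i,y_i]\wedge\psi)$ of \cite[p.~98]{va07} and then translates the whole sentence at once, verifying the equivalence only for sentences evaluated on $\{\emptyset\}$. You instead prove a local, team-level equivalence $\dep[\vec{x},y]\equiv\eta(\vec{x},y)$ valid over arbitrary teams and substitute atom by atom; your verification is sound --- in particular, in the $(\Leftarrow)$ direction you correctly observe that the two disjuncts impose contradictory constraints on each assignment, so the split is forced and $s(\alpha)=\top$ iff $s(z)=s(y)$ throughout the expanded team, after which $\dep[\vec{x},z,\alpha]$ pins down $y$ as a function of $\vec{x}$. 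The underlying gadget is the same in both proofs: the paper's conjunct $\dep[\vec{x}_i,z_i,\alpha_i]\wedge(z_i=y_i\leftrightarrow\alpha_i)$ is exactly your $\dep[\vec{x},z,\alpha]\wedge\bigl((z=y\wedge\alpha)\vee(\neg\, z=y\wedge\neg\alpha)\bigr)$. What your version buys is a stronger statement (equivalence of open formulae over arbitrary teams, with no appeal to the \df normal form), at the cost of having to check the equivalence against arbitrary teams rather than only the structured teams produced by a fixed quantifier prefix. One point you should make explicit: Lemma~\ref{substitution_bd} is stated for \bd-formulae, whereas the formula $\vartheta$ you substitute into, and the intermediate formulae during iterated substitution, mix first-order and Boolean dependence atoms; the lemma extends verbatim to this combined language (the paper defines the semantics of both atom types simultaneously, so the same locality induction applies), but as written you are invoking it slightly outside its stated scope.
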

\begin{proof}
$\bd \leq \df$ holds by Proposition \ref{logic-inclusions}.
For the other direction we will give a translation $\varphi\mapsto\varphi^*$ from sentences of dependence logic to sentences of Boolean dependence logic. Let $\varphi$ be an arbitrary $\D$-sentence in the normal form for $\D$ given in \cite[p.~98]{va07}, i.e.,
\[
\varphi := \forall \vec{x} \, \exists \vec{y} \, \big(\bigwedge_{1\leq i\leq n} \dep[\vec{x}_i,y_i]\; \wedge\; \psi\big),
\]
where $\psi$ is a quantifier free first-order formula, $\vec{x}_i$ is a vector of variables from $\vec{x}$ and $y_i$ a variable from $\vec{y}$, $1\leq i\leq n$. The translation $\varphi^*$ of $\varphi$ is the $\bd$-sentence
\[
\forall \vec{x} \, \exists \vec{y}\, \Big(\psi\wedge \forall \vec{z} \, \exists \vec{\alpha} \,
\bigwedge_{1\leq i\leq n} \big(\dep[\vec{x}_i,z_i,\alpha_i]\wedge (z_i=y_i \leftrightarrow \alpha_i) \big) \Big),
\]
where $\vec{z}$ and $\vec{\alpha}$ are tuples of fresh variables of length $n$, and $z_i$ and $\alpha_i$ are variables from the corresponding tuple, $1\leq i\leq m$. We will show that for every model $\mA$
\[
\mA\models_{\{\emptyset\}} \varphi \quad\text{ iff }\quad \mA\models_{\{\emptyset\}} \varphi^*.
\]

Assume first that $\mA\models_{\{\emptyset\}} \varphi$. Hence
\[
\mA\models_X \bigwedge_{1\leq i\leq n} \dep[\vec{x}_i,y_i]\; \wedge\; \psi,
\]
for some team $X$ that can be obtained from $\{\emptyset\}$ by evaluating the quantifier prefix of $\varphi$, i.e., $X= \{\emptyset\}(A^{\lvert\vec{x}\rvert}/\vec{x}, \vec{G}/\vec{y}\,)$ for some functions
\[
G_i: \{\emptyset\}(A^{\lvert\vec{x}\rvert}/\vec{x}, G_1/y_i,\dots,G_{i-1}/y_{i-1})\to A,
\]
$1\leq i\leq \lvert y\rvert$. Now since
\[
\mA\models_X \bigwedge_{1\leq i\leq n} \dep[\vec{x}_i,y_i],
\]
there exists, for every $1\leq i\leq n$, a function
\[
F_i:A^{\lvert \vec{x}_i\rvert}\rightarrow A
\]
that maps the values of the variables $\vec{x}_i$ to the value of the variable $y_i$ in the team $X$.
Let $Y$ denote the team
\[
X\big(A^{\lvert \vec{z}\rvert}/\vec{z},H_1/\alpha_1, \dots, H_n/\alpha_n\big),
\]
where $H_i:X(A^{\lvert \vec{z}\rvert}/\vec{z}, H_1/\alpha_1,\dots H_{i-1}/\alpha_{i-1})\rightarrow \{\bot,\top\}$ is obtained from $F_i$ in the obvious way, i.e., such that that
\begin{align*}
H_i(s) \dfn \begin{cases}
\top &\text{ if } F_i\big(s(\vec{x}_i)\big) = s(z_i)\\
\bot &\text{ if } F_i\big(s(\vec{x}_i)\big) \neq s(z_i).
\end{cases}
\end{align*}
Notice that
\[
\mA\models_Y \bigwedge_{1\leq i\leq n} \big(\dep[\vec{x}_i,z_i,\alpha_i]\wedge (z_i=y_i \leftrightarrow \alpha_i) \big).
\]
Hence we have that
\[
\mA\models_X\forall \vec{z} \, \exists \vec{\alpha} \,
\bigwedge_{1\leq i\leq n} \big(\dep[\vec{x}_i,z_i,\alpha_i]\wedge (z_i=y_i \leftrightarrow \alpha_i) \big).
\]
Therefore, since
$\mA\models_X \psi$,
and since $X$ was obtained from $\{\emptyset\}$ by evaluating the quantifier prefix $\forall\vec{x}\,\exists\vec{y}$, we conclude that
$\mA\models_{\{\emptyset\}} \varphi^*.$

Assume then that
$\mA\models_{\{\emptyset\}}\varphi^*$
holds. Hence
\[
\mA\models_X \psi\wedge \forall \vec{z} \, \exists \vec{\alpha} \,
\bigwedge_{1\leq i\leq n} \big(\dep[\vec{x}_i,z_i,\alpha_i]\wedge (z_i=y_i \leftrightarrow \alpha_i) \big),
\]
for some team $X$ that can be obtained from $\{\emptyset\}$ by evaluating the quantifier prefix of $\varphi^*$. Furthermore
\[
\mA\models_Y\bigwedge_{1\leq i\leq n} \big(\dep[\vec{x}_i,z_i,\alpha_i]\wedge (z_i=y_i \leftrightarrow \alpha_i) \big),
\]
for some team $Y$ that can be obtained from $X$ by evaluating the quantifiers $\forall \vec{z} \, \exists \vec{\alpha}$.
We will show that for each $i$, $1\leq i\leq n$,
\[
\mA\models_X\dep[\vec{x}_i,y_i].
\]
This together with the fact that $\mA\models_X \psi$ is enough to prove that $\mA\models_{\{\emptyset\}}\varphi$.

Fix $i$, $1\leq i\leq n$, and let $s,t\in X$ be any two assignments such that $s(\vec{x}_i)=t(\vec{x}_i)$.
Clearly there exist assignments $s',t'\in Y$ such that
\[
s'\upharpoonright \dom(X)\;=\;s,
\quad\quad
t'\upharpoonright \dom(X)\;=\;t
\quad\text{ and }\quad s'(z_i)\;=\;t'(z_i)\;=\;s(y_i).
\]
Now since
\[
\mA\models_Y z_i=y_i \leftrightarrow \alpha_i,
\]
we have that $s'(\alpha_i)=1$. Furthermore, since
\[
\mA\models_Y\dep[\vec{x}_i,z_i,\alpha_i],
\]
we have that $s'(\alpha)=t'(\alpha)$. Hence $t'(\alpha)=1$ and furthermore $t'(y_i)=t'(z_i)$. Thus, we have that
\[
t(y_i)=t'(y_i)=t'(z_i)=s(y_i),
\]
and can conclude that $t(y_i)=s(y_i)$. 
Therefore
\[
\mA\models_X \dep[\vec{x}_i,y_i].
\]
\end{proof}
By Theorem \ref{d equiv if equiv eso}
we know that $\dl\equiv \eso$. Hence we obtain the following corollary.
\begin{corollary}\label{bd-eso}
$\bd\equiv \eso$.
\end{corollary}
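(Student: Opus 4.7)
The plan is essentially to chain together two results already established in the paper. By Theorem \ref{bd-d}, we have $\bd \equiv \df$, and by Theorem \ref{d equiv if equiv eso}, we have $\df \equiv \eso$. Composing these two equivalences immediately yields $\bd \equiv \eso$.

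More concretely, I would argue as follows. Given any sentence $\varphi \in \bd$, apply the translation implicit in the proof of Theorem \ref{bd-d} (the direction $\bd \leq \df$, which follows from Proposition \ref{logic-inclusions}) to obtain an equivalent $\df$-sentence $\varphi'$, and then use the $\df \leq \eso$ direction of Theorem \ref{d equiv if equiv eso} to obtain an equivalent $\eso$-sentence. Conversely, given an $\eso$-sentence $\psi$, use the $\eso \leq \df$ direction of Theorem \ref{d equiv if equiv eso} to find an equivalent $\df$-sentence, and then apply the translation $\varphi \mapsto \varphi^*$ from the proof of Theorem \ref{bd-d} (the nontrivial direction $\df \leq \bd$) to obtain an equivalent $\bd$-sentence.

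Since both Theorem \ref{bd-d} and Theorem \ref{d equiv if equiv eso} have already been proven at this point in the paper, there is no real obstacle: the corollary is a direct consequence of the transitivity of $\equiv$. The proof in the paper can therefore be a one-liner citing these two theorems.
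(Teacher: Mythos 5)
Your proposal is correct and matches the paper exactly: the corollary is stated immediately after Theorem \ref{bd-d} with the remark that $\dl\equiv\eso$ by Theorem \ref{d equiv if equiv eso}, so the paper's own justification is precisely the transitivity argument you give. Nothing further is needed.
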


It is well-known that $\eso$ does not have zero-one law. For example, it is easy to write
a sentence $\psi$ of $\eso$ which says that the domain of a model has even cardinality;
clearly the limit probability $\mu(\psi)$ does not exist.

\begin{corollary}\label{bd-0-1-law}
$\bd$ does not have the zero-one law.
\end{corollary}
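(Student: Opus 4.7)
The plan is to combine Corollary~\ref{bd-eso} with a standard counterexample witnessing the failure of the zero-one law for $\eso$. Since $\bd\equiv\eso$, it suffices to exhibit a $\bd$-sentence $\psi$ in some relational vocabulary $\tau$ such that the limit $\mu(\psi)$ does not exist, and then pull it back through the equivalence.

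I would take the vocabulary $\tau$ to be empty and consider the sentence
\[
\psi \dfn \exists R\,\forall x\,\exists y\Bigl(R(x,y)\wedge R(y,x)\wedge \neg\, x=y \wedge \forall z\bigl(R(x,z)\rightarrow z=y\bigr)\Bigr),
\]
which is an $\eso$-sentence asserting the existence of a perfect matching on the domain, hence is equivalent to the statement that the domain has even cardinality. Since $\tau$ is empty, $\lvert\Str_n(\tau)\rvert = 1$ for every $n$, so
\[
\frac{\lvert\{\mathfrak{A}\in\Str_n(\tau)\mid \mathfrak{A}\models\psi\}\rvert}{\lvert\Str_n(\tau)\rvert}
\]
equals $1$ when $n$ is even and $0$ when $n$ is odd. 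Therefore the limit $\mu(\psi)$ does not exist, and in particular $\mu(\psi)\notin\{0,1\}$, so $\eso$ fails the zero-one law.

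By Corollary~\ref{bd-eso}, there exists a $\bd$-sentence $\psi'$ over the same (empty) vocabulary such that $\mathfrak{A}\models\psi'$ iff $\mathfrak{A}\models\psi$ for every $\tau$-structure $\mathfrak{A}$. Consequently $\mu(\psi')=\mu(\psi)$ in the sense that the former exists iff the latter does, and here neither exists. Hence $\bd$ does not have the zero-one law.

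There is essentially no obstacle here: the whole work is done by the equivalence $\bd\equiv\eso$ proved in Corollary~\ref{bd-eso}. The only point that needs minor care is to ensure the counterexample sentence is chosen in a vocabulary (here, the empty one) in which the fraction of $n$-element models satisfying it genuinely oscillates between $0$ and $1$, rather than, say, tending to some intermediate value; the even-cardinality trick handles this cleanly.
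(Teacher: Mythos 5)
Your proof is correct and follows essentially the same route as the paper: the paper also derives the failure of the zero-one law for \bd from $\bd\equiv\eso$ (Corollary~\ref{bd-eso}) together with the standard even-cardinality (perfect matching) counterexample for \eso, which you have merely written out explicitly.
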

\section{Hierarchy of expressive power}\label{separations}
%
In Section \ref{equiv} we showed that the expressive power of the fragments \bbd, \rbd and \abd of Boolean dependence logic coincide with the expressive power of the fragments \fopocp, \pocfo and \pocqf of \fopoc, respectively. In this section we show that the fragments \bbd, \rbd and \abd of Boolean dependence logic form a hierarchy with respect to expressive power. We show that
\[
\pocqf < \pocfo < \fopocp.
\]
and hence that
\[
\abd < \rbd < \bbd.
\]
Moreover, we establish that $\bbd < \bd$ and that $\bd\not\leq\fopoc$.
\begin{lemma}\label{abd closed}
Let $\mA$ be a model, $\mB$ a submodel of $\mA$ and $\varphi$ a sentence of \pocqf. If $\mA\models \varphi$ then $\mB\models \varphi$.
\end{lemma}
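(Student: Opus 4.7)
The plan is to use directly the semantic clause for the connective $N_\pi$ from Section~\ref{pocdef}, combined with the well-known fact that quantifier-free first-order formulas are preserved in both directions between a structure and its submodels. A $\pocqf$-sentence has by definition the shape $\varphi = N_\pi\,\vec{x}_1\alpha_1\ldots\vec{x}_m\alpha_m\,\psi$ with $\psi$ quantifier-free (and whose free variables lie among $\vec{x}_1,\ldots,\vec{x}_m,\alpha_1,\ldots,\alpha_m$, since $\varphi$ is a sentence), so there is no nested logical structure to induct over.

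First I would unwrap the hypothesis $\mA\models\varphi$: this gives functions $f_i\colon A^{n_i}\to\{\bot,\top\}$, for $1\le i\le m$, such that for every tuple $\vec{a}_1\ldots\vec{a}_m$ of pattern $\pi$ with each $\vec{a}_i\in A^{n_i}$, the quantifier-free body $\psi$ holds in $\mA$ under the assignment
\[
    s = (\vec{a}_1/\vec{x}_1,\ldots,\vec{a}_m/\vec{x}_m,f_1(\vec{a}_1)/\alpha_1,\ldots,f_m(\vec{a}_m)/\alpha_m).
\]
Let $B$ denote the domain of $\mB$. I would then define the restricted witnesses $f_i'\dfn f_i\restricted{B^{n_i}}\colon B^{n_i}\to\{\bot,\top\}$ and show that these witness $\mB\models\varphi$.

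To verify this, fix any tuple $\vec{a}_1\ldots\vec{a}_m$ of pattern $\pi$ with $\vec{a}_i\in B^{n_i}$. Since $B\subseteq A$, the same tuple is also a valid choice in $\mA$, so the original functions $f_i$ force $\mA,s\models\psi$ for the assignment $s$ above; moreover $f_i(\vec{a}_i)=f_i'(\vec{a}_i)$ and the first-order values of $s$ all lie in $B$. By the standard preservation property of quantifier-free first-order formulas under submodels (both atoms and negated atoms are preserved because $R^\mB = R^\mA\cap B^{\ar(R)}$), we obtain $\mB,s\models\psi$. Applying the semantic clause for $N_\pi$ in the other direction yields $\mB\models\varphi$.

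The argument is essentially bookkeeping; the only point to be careful about is that the semantics of $N_\pi$ allows the existential functions $f_i$ to have domain $A^{n_i}$, so the restriction step must be made explicit, and the preservation step must invoke quantifier-freeness of $\psi$ to handle negated atomic subformulas without difficulty. This is also precisely where the proof would fail for $\pocfo$ (and hence for $\rbd$), since universal first-order quantifiers inside $\psi$ are not preserved when passing to a submodel.
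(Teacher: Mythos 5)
Your proof is correct, but it takes a genuinely different route from the paper. The paper's own proof is a two-line reduction: it invokes Corollary~\ref{abd-pocqf} ($\pocqf\equiv\mathrm{SNP}$, which itself rests on Theorem~\ref{pocqf-snp} from the literature and the translation machinery of Section~\ref{pocdef}) and then cites an external preservation lemma for strict NP. You instead argue directly from the semantic clause for $N_\pi$: restrict the witnessing functions $f_i$ to $B^{n_i}$, observe that every pattern-$\pi$ tuple over $B$ is also one over $A$ with the same Boolean values assigned, and conclude by the standard preservation of quantifier-free formulas (including the Boolean literals $\alpha_i$, $\neg\alpha_i$, whose truth depends only on the assignment) under passage to a submodel. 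Your argument is self-contained and elementary, avoiding both the chain of translation lemmas and the external citation, at the cost of a little explicit bookkeeping; the paper's version is shorter on the page precisely because it outsources that work. Your closing remark correctly identifies where the argument breaks for $\pocfo$, which matches the separation the paper then draws in Proposition~\ref{pocqf-pocfo}.
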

\begin{proof}
By Corollary \ref{abd-pocqf}, $\pocqf$ has the same expressive power as strict $\NP$. For strict $\NP$ the claim follows from \cite[Lemma~1.2]{barwise:applications}.
\end{proof}

\begin{proposition}\label{pocqf-pocfo}
$\pocqf < \pocfo$.
\end{proposition}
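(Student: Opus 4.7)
The inclusion $\pocqf \leq \pocfo$ is immediate from the definitions: every quantifier-free formula is in particular a first-order formula, so every $\pocqf$-sentence is by syntactic inspection a $\pocfo$-sentence. Hence the work is to exhibit a $\pocfo$-sentence that is not equivalent to any $\pocqf$-sentence. The plan is to use Lemma \ref{abd closed}, which tells us that every $\pocqf$-sentence is preserved under (induced) submodels. It therefore suffices to produce a $\pocfo$-sentence that is not preserved under submodels.

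The canonical non-preserved property is simply \textquotedblleft there exists an element satisfying $P$\textquotedblright, for a unary relation symbol $P$. This is a first-order property, and the grammar of $\pocfo$ requires the outermost construct to be a partially-ordered connective, so I would embed it trivially. Concretely, over a vocabulary containing a unary relation $P$, consider the $\pocfo$-sentence
\[
\varphi\;\dfn\;N_\pi\, y\,\alpha\,\bigl(\exists x\, P(x)\bigr),
\]
where $\pi$ is the pattern with $m=1$, $n_1 = 1$ and the trivial identity relation. Because $y$ and $\alpha$ do not occur in $\exists x\, P(x)$, the semantic clause for $N_\pi$ reduces to: $\mA,s\models \varphi$ iff there exists $f_1\colon A\to\{\bot,\top\}$ such that $\mA,s(a/y,f_1(a)/\alpha)\models \exists x\, P(x)$ for every $a\in A$, which is just $\mA\models \exists x\, P(x)$.

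Now let $\mA$ be the $\{P\}$-structure with domain $\{0,1\}$ and $P^\mA=\{0\}$, and let $\mB$ be the induced substructure with domain $\{1\}$, so that $P^\mB=\emptyset$. Then $\mA\models\varphi$ while $\mB\not\models\varphi$, so $\varphi$ is not preserved under submodels. By Lemma \ref{abd closed}, $\varphi$ cannot be equivalent to any $\pocqf$-sentence, establishing $\pocqf < \pocfo$.

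There is no real obstacle here; the only small point to verify is that the embedding of $\exists x\, P(x)$ into $\pocfo$ via a dummy partially-ordered connective is indeed syntactically valid and semantically innocuous, which is a direct check from the semantic clause of $N_\pi$ in Section \ref{pocdef}.
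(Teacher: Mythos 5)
Your proof is correct and takes essentially the same approach as the paper: both reduce the claim to Lemma \ref{abd closed} and then exhibit a \pocfo-sentence that is not preserved under taking submodels. The only differences are cosmetic — the paper uses $\exists x \exists y\, \neg x = y$ (which works over any relational vocabulary) where you use $\exists x\, P(x)$, and your explicit wrapping in a dummy partially-ordered connective $N_\pi\,y\,\alpha$ is a slightly more careful rendering of the syntactic requirement that the paper glosses over.
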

\begin{proof}
Clearly $\pocqf\leq \pocfo$. By Lemma \ref{abd closed}, the truth of a \pocqf-sentence is preserved from models to its submodels. Hence it is enough to give a $\pocfo$-sentence which is not preserved under taking submodels. Clearly
\[
\exists x \exists y \, \neg x = y
\]
is such a sentence.
\end{proof}
Since, by Theorem \ref{rbd-pocfo}, $\abd\equiv\pocqf$ and $\rbd\equiv \pocfo$, the following result follows from Proposition \ref{pocqf-pocfo}.

\begin{corollary}\label{abd-rbd}
$\abd < \rbd$.
\end{corollary}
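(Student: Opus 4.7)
The plan is to derive this corollary directly by combining the strict separation \pocqf${}<{}$\pocfo from Proposition \ref{pocqf-pocfo} with the equivalences \abd${}\equiv{}$\pocqf and \rbd${}\equiv{}$\pocfo from Theorem \ref{rbd-pocfo}. No new semantic argument is needed, since all the work has been done: Theorem \ref{rbd-pocfo} transports the expressive-power hierarchy on the partially-ordered-connective side to the Boolean-dependence-logic side.

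More concretely, I would first observe that the inclusion \abd${}\leq{}$\rbd is immediate from Proposition \ref{logic-inclusions}, so only the strictness needs justification. For the strictness, I would argue by contradiction: suppose \rbd${}\leq{}$\abd. Then, combining with the already-known inclusion in the opposite direction, we would have \abd${}\equiv{}$\rbd. Applying Theorem \ref{rbd-pocfo} to each side, this would force \pocqf${}\equiv{}$\abd${}\equiv{}$\rbd${}\equiv{}$\pocfo, contradicting the strict separation \pocqf${}<{}$\pocfo established in Proposition \ref{pocqf-pocfo}. Hence \rbd${}\not\leq{}$\abd, which together with \abd${}\leq{}$\rbd yields \abd${}<{}$\rbd.

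Since the entire argument is a short syllogism built from results already proved in the excerpt, there is no real obstacle. The only thing to be slightly careful about is that the separation witness used in Proposition \ref{pocqf-pocfo} (the sentence $\exists x\exists y\,\neg x=y$, which is not preserved under submodels while \pocqf-sentences are) can, if desired, be transferred back to \rbd via the translation $\varphi\mapsto\varphi^+$ defined in the proof of Theorem \ref{bbd-fopocp}; this gives an explicit \rbd-sentence separating \rbd from \abd, but it is not strictly necessary for the corollary statement.
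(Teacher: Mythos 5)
Your proposal is correct and matches the paper's own argument: the paper likewise derives the corollary immediately by combining the equivalences $\abd\equiv\pocqf$ and $\rbd\equiv\pocfo$ from Theorem \ref{rbd-pocfo} with the strict separation $\pocqf<\pocfo$ of Proposition \ref{pocqf-pocfo}. The extra remarks about arguing by contradiction and transferring the explicit witness are harmless but not needed.
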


\begin{definition}\label{equiv_L}
Let $\LL$ be a logic (or a fragment of a logic), $\tau$ a vocabulary, and $\mA$ and $\mB$ first-order 
structures over $\tau$. We write that
$\mA\simp_{\LL}\mB$,
if the implication 
\[
\mA\models \varphi\Rightarrow\mB\models \varphi
\]
holds for every sentence $\varphi \in \LL$.
\end{definition}

Let $N_\pi$ be a partially-ordered connective. By $\mnfor$, we denote the set of all sentences in $\pocfo$ which are of the
form
\[
N_\pi\vec x_1\alpha_1\dots \vec x_m \alpha_m\,\varphi,
\]
where
$\varphi$ is a first-order formula with quantifier rank at most $r$.
We will next define an Ehrenfeucht-Fra\"{\i}ss\'{e} game that captures the truth preservation
relation $\mA\simp_{\mnfor}\mB$. This game is a straightforward modification of the 
corresponding game for \pocsvfo by Sevenster and
Tulenheimo \cite{setu06c}, which in turn is based on the game for \fopocsv by Sandu and V\"a\"an\"anen \cite{sava92}. 

\begin{definition}\label{ef game rbd}
Let $\mA$ and $\mB$ be first-order structures over a vocabulary $\tau$ and 
$r\geq 0$. Let $\pi=(n_1,\dots,n_m,E)$ be a pattern. The \emph{$\mnfor$-EF game} 
$\efgmnr(\mA,\mB)$ 
on $\mA$ and 
$\mB$ is played by two players, Spoiler and Duplicator. The game has two phases.
\vspace{1mm}

\noindent {\bf Phase 1:}
\begin{itemize}
\vspace{-2.3mm}
\item Spoiler picks a function $f_i: A^{n_i}\to\{\bot,\top\}$, for each $i$, $1\le i\le m$. 
\item Duplicator answers by choosing a function $g_i: B^{n_i}\to\{\bot,\top\}$, for each $i$, $1\le i\le m$. 
\item Spoiler chooses tuples $\vec b_i\in B^{n_i}$, $1\le i\le m$, such that $\vec b_1\ldots\vec b_m$
is of pattern $\pi$. 
\item Duplicator answers by choosing tuples $\vec a_i\in A^{n_i}$, $1\le i\le m$, such that
$\vec a_1\ldots\vec a_m$ is of pattern $\pi$, and  $f_i(\vec a_i)=g_i( \vec b_i)$
for each $1\leq i\leq m$. If there are no such tuples 
$\vec a_1,\ldots,\vec a_m$, then Duplicator loses the play 
of the game.
\end{itemize}
\vspace{-1.4mm}
\noindent {\bf Phase 2:}
\begin{itemize}
\vspace{-2.3mm}
\item Spoiler and Duplicator play the usual first-order EF-game of $r$ rounds
on the structures $(\mA,\vec a_1\ldots\vec a_m)$ and $(\mB,\vec b_1\ldots\vec b_m)$:
On each round $j$, $1\leq j\leq r$, Spoiler picks an element $c_j\in A$ (or $d_j\in B$), and
Duplicator answer by choosing an element $d_j\in B$ (or $c_j\in A$, respectively).
\end{itemize}
Duplicator wins the play of the game if and only if the mapping
\[
\vec a_1\ldots\vec a_m c_1\ldots c_r\mapsto \vec b_1\ldots\vec b_m d_1\ldots d_r
\]
is a partial isomorphism from $\mA$ to $\mB$.
We say that Duplicator has a winning strategy in the game if and only she has a 
systematic way of answering all possible moves of Spoiler such that using it she 
always wins the play. 
\end{definition}

We show next that the game $\efgmnr$ can be used for studying the truth preservation
relation $\simp_{\mnfor}$. 
This result is essentially the same as Proposition 12 in \cite{setu06c},
which in turn is a special case of Proposition 7 of \cite{sava92}. 

\begin{proposition}\label{ef char}
Let $\mA$ and $\mB$ be $\tau$-structures,
$\pi$ a pattern,  and $r\geq 0$. If Duplicator has a winning strategy in 
the game $\efgmnr(\mA,\mB)$ then $\mA\simp_{\mnfor}\mB$.
\end{proposition}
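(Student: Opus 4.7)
The plan is to extract a witness for $\mB\models\Phi$ directly from Duplicator's winning strategy, by forcing Spoiler to reveal an arbitrary witness for $\mA\models\Phi$ during Phase~1 of the game. Fix such a sentence $\Phi=N_\pi\,\vec x_1\alpha_1\dots\vec x_m\alpha_m\,\psi\in\mnfor$ with $\mA\models\Phi$, and let $\sigma$ denote Duplicator's winning strategy in $\efgmnr(\mA,\mB)$. By the semantics of $N_\pi$, there are functions $f_i:A^{n_i}\to\{\bot,\top\}$, $1\le i\le m$, such that whenever $\vec a_1\dots\vec a_m$ is of pattern $\pi$, the assignment $s_A$ sending $\vec x_i\mapsto\vec a_i$ and $\alpha_i\mapsto f_i(\vec a_i)$ satisfies $\mA, s_A\models\psi$.

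First I would simulate a play in which Spoiler opens Phase~1 with exactly these $f_i$; then $\sigma$ supplies functions $g_i:B^{n_i}\to\{\bot,\top\}$. My claim is that these $g_i$ witness $\mB\models\Phi$. To verify the claim, I fix an arbitrary tuple $\vec b_1\dots\vec b_m$ of pattern $\pi$ in $B$ and continue the simulation by letting Spoiler play this tuple; since $\sigma$ is winning (so in particular Duplicator has not lost), it must answer with tuples $\vec a_1\dots\vec a_m$ of pattern $\pi$ in $A$ satisfying $f_i(\vec a_i)=g_i(\vec b_i)$ for every $i$. By our choice of the $f_i$ we then obtain $\mA, s_A\models\psi$ for the corresponding $s_A$.

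It remains to transfer satisfaction of $\psi$ from $(\mA, s_A)$ to $(\mB, s_B)$, where $s_B$ sends $\vec x_i\mapsto\vec b_i$ and $\alpha_i\mapsto g_i(\vec b_i)$. For this I invoke Phase~2: continuing $\sigma$, Duplicator wins the standard $r$-round first-order EF game on the expansions $(\mA,\vec a_1\dots\vec a_m)$ and $(\mB,\vec b_1\dots\vec b_m)$, so by the classical Ehrenfeucht--Fra\"iss\'e theorem these expansions agree on all first-order sentences of quantifier rank at most~$r$ (with each $x_{ij}$ interpreted by the corresponding component of $\vec a_i$ or $\vec b_i$). The one delicate point, and the spot where I expect the main care is needed, is that $\psi$ may contain Boolean atoms $\alpha_i$ and their negations and is therefore not literally a first-order sentence over $\tau$; however, the matching equation $f_i(\vec a_i)=g_i(\vec b_i)$ forces every atom $\alpha_i$ to evaluate identically under $s_A$ and $s_B$, so each such atom can be uniformly replaced by a fixed $\tau$-tautology or its negation, turning $\psi$ into a pure first-order formula of rank $\le r$ in the expanded vocabulary. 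The EF theorem then yields $\mB, s_B\models\psi$, which gives $\mB\models\Phi$ and hence $\mA\simp_\mnfor\mB$.
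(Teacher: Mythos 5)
Your proposal is correct and follows essentially the same route as the paper's own proof: feed the witnessing functions $f_i$ for $\mA\models\Phi$ to Duplicator's strategy as Spoiler's first move, take her responses $g_i$ as the candidate witnesses for $\mB$, match each tuple $\vec b_1\ldots\vec b_m$ with Duplicator's answer $\vec a_1\ldots\vec a_m$, and transfer $\psi$ via the standard $r$-round EF theorem on the expanded structures. Your explicit treatment of the Boolean atoms (replacing each $\alpha_i$ by a fixed tautology or its negation, justified by $f_i(\vec a_i)=g_i(\vec b_i)$) spells out a step the paper only gestures at by speaking of equivalence with respect to $\fo_r$ ``extended with Boolean variables,'' but it is the same argument.
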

\begin{proof}
Assume that Duplicator has a winning strategy in $\efgmnr(\mA,\mB)$.
To prove $\mA\simp_{\mnfor}\mB$, assume that
\[
\varphi=N_\pi\,\vec x_1\alpha_1\dots \vec x_m \alpha_m\,\psi
\]
is a sentence
of $\mnfor$ such that $\mA\models\varphi$. We need to show that $\mB\models\varphi$.  Now since $\mA\models\varphi$, there exists functions
\[
f_i: A^{n_i}\to \{\bot,\top\},
\]
$1\leq i\leq m$, such that if $\vec{a}_1\ldots\vec{a}_m$, where $\vec a_j\in A^{n_j}$ for $1\leq j\leq m$,
is of pattern $\pi$ then
\[
\mA\models_{s[\vec{a},\vec{f}]}\,\psi,
\]
where $s[\vec{a},\vec{f}]$ denotes the assignment that maps $\vec{x}_i$ to $\vec{a}_i$ and
$\alpha_i$ to $f_i(\vec{a}_i)$, for $1\le i\le m$.
%

Assume that Spoiler chooses the functions $f_1,\dots,f_m$ as his first move in phase 1 of the game $\efgmnr(\mA,\mB)$. Let
\[
g_i: B^{n_i}\to\{\bot,\top\},
\]
$1\le i\le m$, be the answer given by the winning strategy of Duplicator. Remember that by $s[\vec{b},\vec{g}]$ we mean the assignment that maps $\vec{x}_i$ to $\vec{b}_i$ and
$\alpha_i$ to $g_i(\vec{b}_i)$, for $1\le i\le m$. To show $\mB\models\varphi$,
it suffices to show that
\[
\mB\models_{s[\vec{b},\vec{g}]}\psi,
\]
for all tuples 
$\vec{b}_1\ldots\vec{b}_m$ of pattern $\pi$ such that $\vec{b}_i\in B^{n_i}$, for each~$i\leq m$.
Thus, let $\vec b_i\in B^{n_i}$, $1\le i\le m$, be arbitrary tuples such that 
$\vec b_1\ldots\vec b_m$ is of pattern~$\pi$.
Let $\vec{a}_j\in A^{n_j}$, $1\leq j\leq m$, be the answer given by 
the winning strategy of Duplicator when the second move of Spoiler is $\vec b_1,\ldots,\vec b_m$.
By the definition of the game $\efgmnr$, Duplicator then has a winning strategy in the first-order 
EF game with $r$ rounds between the structures
\[
(\mA,\vec a_1,\ldots,\vec a_m)  \quad\text{ and }\quad  (\mB,\vec b_1,\ldots,\vec b_m).
\]
By the standard EF theorem, it follows that 
$(\mA,\vec a_1,\ldots,\vec a_m)$ and $(\mB,\vec b_1,\ldots,\vec b_m)$ satisfy the
same $\fo_r$-sentences. Note further that
\[
f_1(\vec a_1)=g_1( \vec b_1),\ldots, f_m(\vec a_m)=g_m( \vec b_m),
\]
by the condition
governing the choice of $\vec b_1,\ldots,\vec b_m$, whence
\[
\big(\mA,\vec a_1,\ldots,\vec a_m, f_1(\vec a_1),\ldots,f_m(\vec a_m)\big) \quad\text{ and }\quad 
\big(\mB,\vec b_1,\ldots,\vec b_m, g_1(\vec b_1),\ldots,g_m(\vec b_m)\big)
\]
are 
equivalent 
with respect to all sentences of $\fo_r$ extended with Boolean variables. In particular,
since $\mA\models_{s[\vec a,\vec f]}\psi$, we have $\mB\models_{s[\vec b,\vec g]}\psi$, as desired.
\end{proof}
%
\begin{corollary}\label{pocfo def}
Let $\mathcal{K}$ be a class of $\tau$-structures. If for every pattern $\pi$ and every
$r\geq 0$ there exist $\tau$-structures $\mA$ and $\mB$
such that $\mA\in \mathcal{K}$, $\mB\not\in \mathcal{K}$ and Duplicator has a winning strategy in the game
$\efgmnr(\mA,\mB)$, then $K$ is not definable in $\pocfo$.
\end{corollary}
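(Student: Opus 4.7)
The plan is to argue by contraposition, using Proposition \ref{ef char} as the main tool. Suppose for contradiction that $\mathcal{K}$ is definable in $\pocfo$; then there is a sentence $\varphi\in\pocfo(\tau)$ with $\mathcal{K}=\{\mA\mid \mA\models \varphi\}$. By the definition of $\pocfo$, we may write
\[
\varphi = N_\pi\,\vec{x}_1\alpha_1\ldots\vec{x}_m\alpha_m\,\psi,
\]
where $\pi$ is some concrete pattern and $\psi$ is some concrete first-order formula. Let $r$ be the quantifier rank of $\psi$. Then $\varphi\in\mnfor$ for this particular choice of $\pi$ and $r$.

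Now I would invoke the hypothesis for precisely this pair $(\pi, r)$: it yields structures $\mA\in\mathcal{K}$ and $\mB\notin\mathcal{K}$ such that Duplicator has a winning strategy in $\efgmnr(\mA,\mB)$. By Proposition \ref{ef char}, this implies $\mA\simp_{\mnfor}\mB$, so in particular $\mA\models\varphi$ entails $\mB\models\varphi$. But $\mA\in\mathcal{K}$ gives $\mA\models\varphi$, while $\mB\notin\mathcal{K}$ gives $\mB\not\models\varphi$, a contradiction.

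There is no real obstacle here: the corollary is essentially a direct repackaging of Proposition \ref{ef char}. The only thing to be careful about is the quantification order in the hypothesis --- the pattern $\pi$ and rank $r$ must be chosen \emph{after} fixing a hypothetical defining sentence $\varphi$, which is exactly what the universal quantification over $\pi$ and $r$ in the hypothesis allows.
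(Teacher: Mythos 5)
Your proposal is correct and follows essentially the same route as the paper: both argue by contraposition, extracting the pattern $\pi$ and quantifier rank $r$ from a hypothetical defining sentence $N_\pi\,\vec{x}_1\alpha_1\ldots\vec{x}_m\alpha_m\,\psi$ and then applying Proposition \ref{ef char} to the structures supplied by the hypothesis. Your remark about the quantification order (choosing $\pi$ and $r$ after fixing $\varphi$) is exactly the point the paper's proof relies on.
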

\begin{proof}
We prove the contraposition of the claim. Thus,
assume that $K$ is definable in $\pocfo$. Then there is a sentence $\varphi$
of the form $N_\pi\vec x_1\alpha_1\dots \vec x_m\alpha_m\,\psi$ such that
\[
\mathcal{K}=\{\mA\in \mathrm{Str}(\tau)\mid\mA\models\varphi\}.
\]
Let $r$ be the quantifier rank of $\psi$. Then 
by Proposition \ref{ef char}, there are no structures $\mA\in \mathcal{K}$ and $\mB\not\in \mathcal{K}$ such that Duplicator
has a winning strategy in the game $\efgmnr(\mA,\mB)$.  
\end{proof}

\begin{theorem}\label{rbd-bbd}
$\rbd < \bbd$
\end{theorem}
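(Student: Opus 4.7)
The inclusion $\rbd \leq \bbd$ is immediate from Proposition~\ref{logic-inclusions}. For the strict inequality, by Theorems~\ref{rbd-pocfo} and~\ref{bbd-fopocp} it suffices to show $\pocfo < \fopocp$, and for this the plan is to invoke the Ehrenfeucht-Fra\"{\i}ss\'{e} game characterization of $\pocfo$-definability from Corollary~\ref{pocfo def}. Concretely, I would exhibit a class $\mathcal{K}$ of structures definable in $\fopocp$ by a sentence in which an outer first-order quantifier genuinely wraps a partially-ordered connective, e.g.\ a sentence of the form $\exists x \, N_\pi \, \vec y_1\alpha_1 \ldots \vec y_m\alpha_m \, \psi(x,\vec y_1,\alpha_1,\ldots,\vec y_m,\alpha_m)$, where $\psi$ asserts a locally witnessed Boolean property near $x$ (for instance, the existence of a fixed-point-free involution on an $x$-local subset in a graph vocabulary). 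Via the construction in the proof of Theorem~\ref{bbd-fopocp}, such a sentence corresponds to a $\bbd$-formula whose Boolean dependence atoms sit inside the scope of $\exists x$, so it is in $\bbd$ but manifestly violates the $\rbd$ restriction.

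To show that this $\mathcal{K}$ is not $\pocfo$-definable, Corollary~\ref{pocfo def} reduces the task to producing, for every pattern $\pi$ and every rank $r\geq 0$, structures $\mA_{\pi,r}\in\mathcal{K}$ and $\mB_{\pi,r}\notin\mathcal{K}$ together with a winning strategy for Duplicator in $\efgmnr(\mA_{\pi,r},\mB_{\pi,r})$. A natural choice is to take both structures as large disjoint unions of small ``gadgets'', where $\mA_{\pi,r}$ contains a single anomalous gadget witnessing $\mathcal{K}$ while $\mB_{\pi,r}$ contains only gadgets of a ``normal'' type that fail the property. By making the multiplicity of normal gadgets much larger than the parameters $\max_i n_i$ and $r$, Duplicator can in Phase~1 reply to Spoiler's functions $f_i:A^{n_i}\to\{\bot,\top\}$ by functions $g_i:B^{n_i}\to\{\bot,\top\}$ that mirror the values of $f_i$ on normal tuples, so that any pattern-$\pi$ tuple $\vec b$ in $\mB$ admits a matching pattern-$\pi$ tuple $\vec a$ lying entirely inside normal regions of $\mA$; the ensuing $r$-round first-order EF-game of Phase~2 is then won because those regions in $\mA$ and $\mB$ are locally isomorphic.

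The hard step will be verifying Duplicator's Phase~1 strategy against arbitrary Spoiler play: no matter how the truth values of $f_1,\ldots,f_m$ are distributed over $\mA$ and how the pattern-$\pi$ tuple $\vec b$ is subsequently chosen in $\mB$, Duplicator must locate a matching tuple $\vec a$ in $\mA$ that (i) realises the correct function-value profile under the $g_i$, and (ii) avoids the anomalous gadget. The key ingredient will be a pigeonhole argument on the bounded number of first-order $r$-types realised by tuples of arity $\sum_i n_i$, combined with the high multiplicity of normal gadgets; the delicate point is to ensure that no clever choice of $f_i$ forces Duplicator to involve the anomalous gadget, which would expose the distinction in Phase~2.
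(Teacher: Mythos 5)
Your overall architecture matches the paper's: the inclusion comes from Proposition~\ref{logic-inclusions}, and the separation is reduced, via Theorems~\ref{rbd-pocfo} and~\ref{bbd-fopocp}, to exhibiting a class definable in $\bbd$ (equivalently $\fopocp$) but not in $\pocfo$, with non-definability certified by Corollary~\ref{pocfo def}. However, as written the proposal has a genuine gap: neither of the two essential ingredients is actually supplied. You never fix a concrete separating class --- ``a locally witnessed Boolean property near $x$'' and ``a fixed-point-free involution on an $x$-local subset'' are placeholders, not a sentence --- and you explicitly defer the verification of Duplicator's Phase~1 strategy, which you yourself identify as the hard step. A proof of a strict separation cannot leave the separating example unspecified and the winning strategy unverified, because it is precisely there that such arguments tend to fail: Duplicator must commit to $g_1,\ldots,g_m$ on all of $B^{n_1},\ldots,B^{n_m}$ before seeing Spoiler's tuples, and a pigeonhole count of $r$-types does not by itself produce functions $g_i$ that are consistent with every subsequent tuple choice.

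The paper closes exactly this gap by a careful choice of structures that makes Phase~1 trivial rather than delicate. It takes $\mathcal{K}$ to be non-connectivity of graphs, defined by an explicit $\bbd$-sentence of the form $\exists u\exists v\forall x\forall y\exists\alpha\exists\beta(\dots)$ with dependence atoms under $\exists u\exists v$, and then sets $\mB$ to be a single $k$-cycle and $\mA$ the disjoint union of two $k$-cycles, arranged so that $B\subseteq A$. Duplicator then answers Spoiler's functions by $g_i:=f_i\upharpoonright B$ and Spoiler's tuples by the identical tuples $\vec a_i:=\vec b_i$; the value condition $f_i(\vec a_i)=g_i(\vec b_i)$ holds by construction, there is no ``anomalous gadget'' to avoid, and Phase~2 is the standard distance argument on cycles for $k$ large. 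I would encourage you to either adopt this kind of substructure trick (play $\vec a:=\vec b$ inside a common copy) or, if you keep the gadget-union framework, to actually carry out the Phase~1 argument; until one of these is done the proof is incomplete.
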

\begin{proof}
By Proposition \ref{logic-inclusions}, $\rbd\leq \bbd$. For the strict inclusion, we show that non-connectivity of graphs is definable in $\bbd$, but not in $\rbd$. Let $\mathcal{K}$ denote the class non-connected graphs.
Note first that a graph $\mA=(A,E^\mA)$ is not connected if and only if there is a 
subset $U\subseteq A$ such that $U$ and $A\setminus U$ are nonempty, and
there are no edges $(a,b)\in E^\mA$ between  $U$ and $A\setminus U$. This can 
be expressed by the $\bbd$-sentence
%
\[
\begin{array}{lll}
   &\exists u\exists v\forall x\forall y \exists\alpha\exists\beta \big(\\
     &\quad   \dep[x,\alpha] \land \dep[y,\beta] &\arraycomment{There are two relations\dots}\\
   &\quad \land  (x=y\to (\alpha\leftrightarrow\beta)) &\arraycomment{\dots which are equal\dots}\\
   &\quad \land  (x=u\to \alpha) \land (x=v\to \lnot\alpha) &\arraycomment{\dots and contain $u$ but not $v$.}\\
   &\quad \land  (\alpha\land\lnot\beta\to \lnot Exy)\, \big). &\arraycomment{\parbox[t]{5.5cm}{If $x$ is in the relations but $y$ is not then there is no edge between $x$ and $y$.}}
\end{array}
\]
%

We use Corollary~\ref{pocfo def} to prove that non-connectivity is not definable
in $\pocfo$. By Theorem~\ref{rbd-pocfo}, it then follows that non-connectivity is not definable
in $\rbd$. 
Let us fix  the pattern $\pi$ and the number of rounds $r\geq 0$, and consider the 
game $\efgmnr$. Let $\mA=(A,E^\mA)$ and $\mB=(B,E^\mB)$ be
the graphs such that
\begin{itemize}
\item $B=\{u_1,\ldots, u_k\}$, 
and $A=B\cup \{v_1,\ldots,v_k\}$,
\item $E^\mB=\{(u_i,u_j)\in B^2 \mid |i-j|=1\}\cup \{(u_1,u_k),(u_k,u_1)\}$, 
\item $E^\mA=E^\mB\cup\{(v_i,v_j)\in A^2\mid |i-j|=1\}\cup \{(v_1,v_k),(v_k,v_1)\}$.
\end{itemize}
Thus, $\mB$ is a cycle of length $k$, and $\mA$ is the disjoint union of
two cycles of length $k$. In particular, $\mA\in \mathcal{K}$ and $\mB\not\in \mathcal{K}$. We will show that if $k$ is large enough, then Duplicator has a winning 
strategy in the game $\efgmnr(\mA,\mB)$. By Corollary~\ref{pocfo def} it then follows that $\mathcal{K}$, i.e., non-connectivity of graphs, is not definable in $\pocfo$.

Let
\[
f_i:A^{n_i}\to\{\bot,\top\},\, 1\le i\le m,
\]
be the functions that Spoiler picks on his first move in the game. 
Duplicator will then answer by picking the functions
\[
g_i: B^{n_i}\to\{\bot,\top\}, 1\le i\le m,
\]
where
$g_i:=f_i\upharpoonright B$, for each $1\leq i\leq m$. 
For his next move, Spoiler picks a tuple 
\[
\vec b_i\in B^{n_i}, \text{ for each $i$, }1\le i\le m,
\]
such that $\vec b_1\ldots\vec b_m$
is of pattern $\pi$. Now, Duplicator can simply answer by choosing
the same tuples: let
\[
\vec a_i\dfn\vec b_i \text{, for each  $1\leq i\leq m$.}
\]
Clearly
the requirement $f_i(\vec a_i)=g_i(\vec b_i)$ is then satisfied.
The game continues after this as the first-order EF-game with $r$ rounds
on the structures $(\mA,\vec a_1\ldots\vec a_m)$ and $(\mB,\vec b_1\ldots\vec b_m)$.
Since the mapping
\[
\vec a_1\ldots\vec a_m\mapsto\vec b_1\ldots\vec b_m
\]
respects distances between nodes in the graphs $\mA$ and $\mB$, a standard 
argument shows that Duplicator has a winning strategy in the rest of the game,
provided that $k$ is big enough. 
%
%
\end{proof}
By Theorems \ref{bbd-fopocp} and \ref{rbd-pocfo}, $\bbd\equiv\fopocp$ and $\rbd\equiv\pocfo$. Hence we obtain the following corollary.
\begin{corollary}
$\pocfo < \fopocp$.
\end{corollary}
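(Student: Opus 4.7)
The plan is to obtain the statement as a direct transfer of the separation $\rbd < \bbd$ (Theorem~\ref{rbd-bbd}) through the logical equivalences $\bbd\equiv\fopocp$ (Theorem~\ref{bbd-fopocp}) and $\rbd\equiv\pocfo$ (Theorem~\ref{rbd-pocfo}) established in Section~\ref{equiv}. No new combinatorial or game-theoretic work is needed; everything has already been carried out in the earlier sections.

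First I would verify the inclusion $\pocfo\leq\fopocp$. This is immediate from the syntactic definitions: a $\pocfo$-sentence has the form $N_\pi\,\vec{x}_1\alpha_1\ldots\vec{x}_m\alpha_m\,\varphi$ with $\varphi\in\fo$, and this is precisely a formula generated by the grammar of $\fopocp$ (whose grammar includes $\fo$-formulae and a single outermost positive occurrence of a partially-ordered connective is certainly admitted). Since every $\pocfo$-sentence is a $\fopocp$-sentence, semantic inclusion follows trivially.

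For the strictness, I would invoke Theorem~\ref{rbd-bbd}, which exhibits a class of structures (non-connectivity of graphs) definable in $\bbd$ but not in $\rbd$. Applying the equivalence $\bbd\equiv\fopocp$ yields a $\fopocp$-sentence defining this class, while applying $\rbd\equiv\pocfo$ rules out any $\pocfo$-sentence defining it. Thus the class separates the two logics at the level of sentences, giving $\pocfo<\fopocp$.

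There is no real obstacle here: the main technical effort — building the EF-style game $\efgmnr$ and exhibiting two cycle-structures on which Duplicator wins in the non-connectivity proof — has been expended in Theorem~\ref{rbd-bbd}, and the translations underlying the equivalences of Theorems~\ref{bbd-fopocp} and~\ref{rbd-pocfo} preserve the truth of sentences on every structure, hence preserve definability of classes of structures. The corollary is therefore essentially bookkeeping, invoking three earlier results in sequence.
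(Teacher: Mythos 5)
Your proposal is correct and matches the paper's own argument: the paper derives this corollary exactly by transferring the separation $\rbd<\bbd$ of Theorem~\ref{rbd-bbd} through the equivalences $\bbd\equiv\fopocp$ and $\rbd\equiv\pocfo$. (Indeed, the proof of Theorem~\ref{rbd-bbd} already shows non-connectivity is not $\pocfo$-definable directly, so your witness class is the same one the paper uses.)
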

As a byproduct of the results concerning the zero-one law, we obtain the following results concerning expressive power.
\begin{proposition}\label{rbd-bd}
$\bbd < \bd$, and moreover $\bd\not\leq\fopoc$.
\end{proposition}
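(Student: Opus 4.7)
The plan is to deduce both separations from the zero-one laws already established in the paper, rather than exhibiting explicit separating sentences or playing any new Ehrenfeucht-Fra\"iss\'e game. Recall that a class of logics ordered by $\leq$ inherits the zero-one law downward: if $\logic{L}_1 \leq \logic{L}_2$ and $\logic{L}_2$ has the zero-one law, then so does $\logic{L}_1$, since any $\logic{L}_1$-sentence is equivalent to some $\logic{L}_2$-sentence and the limit probability depends only on the defined class of models. This gives us a simple contrapositive tool: any logic that fails the zero-one law cannot be translated into a logic that has it.

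The proof then reduces to invoking results that are already in place. For the first separation, $\bbd \leq \bd$ holds by Proposition \ref{logic-inclusions}. By Corollary \ref{bbd-0-1-law}, $\bbd$ has the zero-one law, while by Corollary \ref{bd-0-1-law}, $\bd$ does not. Hence $\bd \not\leq \bbd$, yielding $\bbd < \bd$. For the second part, Corollary \ref{0-1-law} states that $\fopoc$ has the zero-one law; combined again with Corollary \ref{bd-0-1-law}, this immediately gives $\bd \not\leq \fopoc$, since otherwise $\bd$ would inherit the zero-one law from $\fopoc$.

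I expect no genuine obstacle here: all the work has already been done upstream, in particular the equivalence $\bd \equiv \eso$ of Corollary \ref{bd-eso} together with the classical fact that $\eso$ defines parity of the domain size (so the limit probability does not exist), and in the transfer of the zero-one law from $\fopocsv$ to $\fopoc$ via Proposition \ref{fopocsv} and subsequently to $\bbd$ via Theorem \ref{bbd-fopocp}. The only thing to state cleanly is the downward inheritance of the zero-one law, which is an immediate consequence of the definition of $\leq$ between logics and the definition of $\mu(\varphi)$.
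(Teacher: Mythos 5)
Your proof is correct and follows exactly the paper's own argument: both separations are deduced from the zero-one laws of $\fopoc$ and $\bbd$ (Corollaries \ref{0-1-law} and \ref{bbd-0-1-law}) together with the failure of the zero-one law for $\bd$ (Corollary \ref{bd-0-1-law}), using the downward inheritance of the zero-one law under $\leq$. No differences worth noting.
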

\begin{proof}
Clearly $\bbd\leq\bd$. By Corollary \ref{0-1-law} and Corollary \ref{bbd-0-1-law}, $\fopoc$ and $\bbd$ have the zero-one law. By Corollary \ref{bd-0-1-law}, $\bd$ does not have the zero-one law. Therefore $\bbd<\bd$ and $\bd\not\leq\fopoc$.
\end{proof}

\section{Conclusion}
In this article we defined a new variant of dependence logic called Boolean dependence logic. Boolean dependence logic is an extension of first-order logic with dependence atoms of the form $\dep[\vec{x},\alpha]$, where $\vec{x}$ is a tuple of first-order variables and $\alpha$ is a Boolean variable. We also introduced a notational variant of partially-ordered connectives based on the narrow Henkin quantifiers of Blass and Gurevich \cite{blassgure}. We showed that the expressive power of Boolean dependence logic and dependence logic coincide. We defined natural syntactic fragments of Boolean dependence logic and proved that the expressive power of these fragments coincide with corresponding logics based on partially-ordered connectives. More formally, we showed that
\[
\bbd\equiv\fopocp, \rbd\equiv\pocfo\ \text{ and } \abd\equiv\pocqf.
\]
Moreover, we proved that the fragments of Boolean dependence logic form a strict hierarchy in terms of expressive power, i.e., we showed that
\[
\abd < \rbd < \bbd < \bd.
\]
Therefore, we also showed that
\[
\pocqf < \pocfo < \fopocp.
\]
In addition, we obtained that $\bd$ does not have the zero-one law, whereas the logics below $\bbd$ and $\fopoc$ have the zero-one law. Therefore we obtained that $\bd\not\leq \fopoc$. 

\bibliography{tampere1}

\newcommand{\etalchar}[1]{$^{#1}$}
\providecommand{\bysame}{\leavevmode\hbox to3em{\hrulefill}\thinspace}
\providecommand{\MR}{\relax\ifhmode\unskip\space\fi MR }
\providecommand{\MRhref}[2]{%
  \href{http://www.ams.org/mathscinet-getitem?mr=#1}{#2}
}
\providecommand{\href}[2]{#2}
\begin{thebibliography}{EHM{\etalchar{+}}13}

\bibitem[Abr07]{Abramsky:2007}
S.~Abramsky, \emph{A compositional game semantics for multi-agent logics of
  imperfect information}, J. van Benthem, D. Gabbay and B. Lowe, eds., Texts in
  Logic and Games \textbf{1} (2007), no.~6, 11--48.

\bibitem[AV09]{Abva09}
S.~Abramsky and J.~Väänänen, \emph{From {IF} to {BI}}, Synthese \textbf{167}
  (2009), no.~2, 207--230.

\bibitem[Bar69]{barwise:applications}
J.~Barwise, \emph{Applications of strict ${\Pi^1_1}$ predicates to infinitary
  logic}, J. Symb. Log. \textbf{34} (1969), no.~3, 409--423.

\bibitem[BG86]{blassgure}
A.~Blass and Y.~Gurevich, \emph{Henkin quantifiers and complete problems},
  Annals of Pure and Applied Logic \textbf{32} (1986), 1 -- 16.

\bibitem[BK05]{Bradfield:2005}
J.~C. Bradfield and S.~Kreutzer, \emph{The complexity of independence-friendly
  fixpoint logic}, CSL, 2005, pp.~355--368.

\bibitem[Bra13]{bradfield13}
J.~C. Bradfield, \emph{Team building in dependence}, CSL, 2013, pp.~116--128.

\bibitem[DK12]{Duko12}
A.~Durand and J.~Kontinen, \emph{Hierarchies in dependence logic}, ACM Trans.
  Comput. Logic \textbf{13} (2012), no.~4, 31:1--31:21.

\bibitem[Ebb14]{ebbingthesis}
J.~Ebbing, \emph{Complexity and expressivity of dependence logic extensions},
  Ph.D. thesis, Leibniz Universität Hannover, 2014.

\bibitem[EF99]{ebbinghaus99}
H.~Ebbinghaus and J.~Flum, \emph{Finite model theory}, Perspectives in
  Mathematical Logic, Springer-Verlag GmbH, 1999.

\bibitem[EHLV13]{EHLV2013}
J.~Ebbing, L.~Hella, P.~Lohmann, and J.~Virtema, \emph{Boolean dependence logic
  and partially-ordered connectives}, WoLLIC (L.~Libkin, U.~Kohlenbach, and
  R.~J. G.~B. de~Queiroz, eds.), Lecture Notes in Computer Science, vol. 8071,
  Springer, 2013, pp.~111--125.

\bibitem[EHM{\etalchar{+}}13]{EHMMVV13}
J.~Ebbing, L.~Hella, A.~Meier, J.-S. M{\"u}ller, J.~Virtema, and H.~Vollmer,
  \emph{Extended modal dependence logic}, WoLLIC, 2013, pp.~126--137.

\bibitem[EK13]{engko13}
F.~Engstr{\"o}m and J.~Kontinen, \emph{Characterizing quantifier extensions of
  dependence logic}, J. Symb. Log. \textbf{78} (2013), no.~1, 307--316.

\bibitem[EKV13]{EKV13}
F.~Engstr{\"o}m, J.~Kontinen, and J.~A. V{\"a}{\"a}n{\"a}nen, \emph{Dependence
  logic with generalized quantifiers: Axiomatizations}, WoLLIC, 2013,
  pp.~138--152.

\bibitem[End70]{Enderton70}
H.~Enderton, \emph{Finite partially-ordered quantifiers}, Zeitschrift f{\"{u}}r
  Mathematische Logik und Grundlagen der Mathematik \textbf{16} (1970),
  393--397.

\bibitem[FV93]{FeVar}
T.~Feder and M.~Y. Vardi, \emph{Monotone monadic snp and constraint
  satisfaction}, Proceedings of the Twenty-fifth Annual ACM Symposium on Theory
  of Computing, STOC '93, 1993, pp.~612--622.

\bibitem[Gal12]{gallianithesis}
P.~Galliani, \emph{The dynamics of imperfect information}, Ph.D. thesis,
  University of Amsterdam, 2012.

\bibitem[GV13]{grava13}
E.~Gr{\"a}del and J.~A. V{\"a}{\"a}n{\"a}nen, \emph{Dependence and
  independence}, Studia Logica \textbf{101} (2013), no.~2, 399--410.

\bibitem[Hen61]{henkin1961}
L.~Henkin, \emph{Some remarks on infinitely long formulas}, Infinistic Methods,
  Pergamon Press, 1961, pp.~167--183.

\bibitem[Hin96]{hintikka96}
J.~Hintikka, \emph{The principles of mathematics revisited}, Cambridge
  University Press, 1996.

\bibitem[Hod97]{Hodges97}
W.~Hodges, \emph{Some strange quantifiers}, Structures in Logic and Computer
  Science, A Selection of Essays in Honor of Andrzej Ehrenfeucht (London, UK,
  UK), Springer-Verlag, 1997, pp.~51--65.

\bibitem[HS89]{hisa89}
J.~Hintikka and G.~Sandu, \emph{Informational independence as a semantical
  phenomenon}, Logic, Methodology and Philosophy of Science (J.~E. Fenstad,
  I.~T. Frolov, and R.~Hilpinen, eds.), vol.~8, Elsevier, Amsterdam, 1989,
  pp.~571--589.

\bibitem[HST08]{hesetu08}
L.~Hella, M.~Sevenster, and T.~Tulenheimo, \emph{Partially ordered connectives
  and monadic monotone strict {NP}}, J. of Logic, Lang. and Inf. \textbf{17}
  (2008), no.~3, 323--344.

\bibitem[KKLV11]{KoKuLoVi:2011}
J.~Kontinen, A.~Kuusisto, P.~Lohmann, and J.~Virtema, \emph{Complexity of
  two-variable dependence logic and if-logic}, Proceedings of the 2011 IEEE
  26th Annual Symposium on Logic in Computer Science, LICS '11, 2011,
  pp.~289--298.

\bibitem[Kon10]{jarmo}
J.~A. Kontinen, \emph{Coherence and complexity in fragments of dependence
  logic}, Ph.D. thesis, 2010.

\bibitem[KV13]{konva13}
J.~Kontinen and J.~A. V{\"a}{\"a}n{\"a}nen, \emph{Axiomatizing first-order
  consequences in dependence logic}, Ann. Pure Appl. Logic \textbf{164} (2013),
  no.~11, 1101--1117.

\bibitem[Loh12]{lohmannthesis}
P.~Lohmann, \emph{Computational aspects of dependence logic}, Ph.D. thesis,
  Leibniz Universität Hannover, 2012.

\bibitem[LV13]{lohvo13}
P.~Lohmann and H.~Vollmer, \emph{Complexity results for modal dependence
  logic}, Studia Logica \textbf{101} (2013), no.~2, 343--366.

\bibitem[MSS11]{allen:2011}
A.~L. Mann, G.~Sandu, and M.~Sevenster, \emph{Independence-friendly logic - a
  game-theoretic approach}, London Mathematical Society lecture note series,
  vol. 386, Cambridge University Press, 2011.

\bibitem[Nur09]{nurmithesis}
V.~Nurmi, \emph{Dependence logic : investigations into higher-order semantics
  defined on teams}, Ph.D. thesis, University of Helsinki, 2009.

\bibitem[Sev09]{Sevenster:2009}
M.~Sevenster, \emph{Model-theoretic and computational properties of modal
  dependence logic}, J. Log. Comput. \textbf{19} (2009), no.~6, 1157--1173.

\bibitem[ST06]{setu06c}
M.~Sevenster and T.~Tulenheimo, \emph{Partially ordered connectives and
  ${\Sigma^1_1}$ on finite models}, Proceedings of the 2nd Computability in
  Europe Conference (CiE 2006), Logical Approaches to Computational Barriers,
  volume LNCS 3988, 2006, pp.~516--525.

\bibitem[SV92]{sava92}
G.~Sandu and J.~V\"{a}\"{a}n\"{a}nen, \emph{Partially ordered connectives},
  Zeitschrift f\"ur Mathematische Logik und Grundlagen der Mathematik
  \textbf{38} (1992), 361--372.

\bibitem[V{\"a}{\"a}07]{va07}
J.~A. V{\"a}{\"a}n{\"a}nen, \emph{Dependence logic - a new approach to
  independence friendly logic}, London Mathematical Society student texts,
  vol.~70, Cambridge University Press, 2007.

\bibitem[VH10]{Vaananen+Hodges:2010}
J.~V{\"a}{\"a}n{\"a}nen and W.~Hodges, \emph{Dependence of variables construed
  as an atomic formula}, Ann. Pure Appl. Logic \textbf{161} (2010), no.~6,
  817--828.

\bibitem[Wal70]{Walkoe70}
W.~J. Walkoe, \emph{Finite partially-ordered quantification}, J. Symb. Log.
  \textbf{35} (1970), no.~4, 535--555.

\end{thebibliography}

\end{document}